\theoremstyle{plain}
\newtheorem{thm}{Theorem}[section]
\theoremstyle{plain}
\newtheorem{lem}[thm]{Lemma}
\newtheorem{cor}[thm]{Corollary}
\theoremstyle{definition}
\begin{document}

\title{A Weighted Divisor problem and Exponential Sum}

\date{}

\author{Kritika Aggarwal}
\address{Kritika Aggarwal\\ Department of Mathematics\\
Indraprastha Institute of Information Technology IIIT, Delhi\\
Okhla, Phase III, New Delhi-110020, India.} 
\email{kritikaa@iiitd.ac.in}

\author{Debika Banerjee}
\address{Debika Banerjee\\ Department of Mathematics\\
Indraprastha Institute of Information Technology IIIT, Delhi\\
Okhla, Phase III, New Delhi-110020, India.} 
\email{debika@iiitd.ac.in}

\thanks{2020 \textit{Mathematics Subject Classification.} Primary 11N37, 11L07, 11M06. Secondary 33C10.\\
\textit{Keywords and phrases.} Exponential sum, Hurwitz zeta function, Derivative of the Riemann zeta function, Mean square, Bessel functions}

\begin{abstract}
In this paper, we investigate a weighted divisor problem involving the exponential sum of $D_{(1)}(n)$, the $n$th coefficient in the Dirichlet series expansion of $\zeta'(s)^2$. We establish a truncated Vorono\"{i} type formula for the error term of $\sum_{n\leq x}D_{(1)}(n)e(nh/k)$, analogous to the results obtained by Jutila. Utilizing this truncated formula, we derive a mean square estimate of the error term. In addition, we study the associated Riesz sum and the corresponding error term, along with its mean square estimate.
\end{abstract}

\maketitle

\section{Introduction}
For $x>0$, $\alpha\in\mathbb{R}$ and $\{b(n)\}$ be any sequence 
\begin{align}
    M(x,\alpha)=\sum_{n\leq x}b(n)e(n\alpha),\label{generalexpsum}
\end{align}
is a type of exponential sum whose upper bound estimates are often desirable in dealing with analytical problems. These estimates play an important role in the study of distribution properties of arithmetical functions and in bounding error terms in asymptotic formulas. We use the standard notation $e_k(\alpha)=e^{2\pi i\alpha/k}$ and $e(\alpha)=e^{2\pi i\alpha}$. \\

In 1937, Vinogradov investigated the exponential sum $f(x) = \sum_{p \leq x} (\log p)e(p x)$ as the generating function to refine the Hardy-Littlewood circle method, and used his technique for estimating
the exponential sums (refer \cite{VNGR} and \cite{VNGR2} for details). His work extended a 1923 result of Hardy and Littlewood, who had shown that every sufficiently large odd integer can be expressed as the sum of three primes under the assumption of the Generalized Riemann Hypothesis. Vinogradov's breakthrough provided an unconditional proof of this result and had further applications in the famous Waring's problem. Other recent works on exponential sums over primes can be found in \cite{AL} and \cite{VR}.\\

There have been significant developments in the theory of exponential sums over the last few decades. In 1987, Jutila \cite{JM2} investigated $M(x,\alpha)$ for $b(n)=\tau(n)$ to obtain an upper bound $\ll x^6$, where $\tau(n)$ is the Ramanujan $\tau$ function. Maier and Sankaranarayanan \cite{MS} in 2005 estimated the upper bound of the sum in \eqref{generalexpsum} for $b(n)=\mu(n)$, the M\"{o}bius function. This work was based on the prior estimations given by Baker and Harman \cite{BH} in 1991. Pandey \cite{MP} in 2022 provided an estimate for $\int_0^1|M(x,\alpha)|^s$ for $b(n)=\tau_k(n),$ the $k$th Dirichlet-Piltz divisor function, for $k\geq 2$, $s>2$ any real number. Most recently, the second author with Gupta \cite{DS} studied the higher power moments of exponential sum with twisted divisor function.\\

The Dirichlet divisor problem has been among the most fascinating open problems in number theory till date. Mathematicians have been attempting to refine the bound of the error term $\Delta(x)$ in the Dirichlet's asymptotic formula
\begin{align}
\sum_{n\leq x}d(n)=x\log x+(2\gamma-1)x+\Delta(x),\label{divisorproblemformula}
\end{align}
and it is conjectured that $\Delta(x)=O(x^{\frac14+\varepsilon})$ based on the mean value results. In 1987, Jutila \cite{JM} incorporated the exponential factor in \eqref{divisorproblemformula} to obtain a new kind of divisor problem 
\begin{align}
    D(x,h/k)=\sideset{}{'}\sum_{n\leq x}d(n)e_k(nh),\label{D(x,r)defn}
\end{align}
and analogously,
\begin{align}
    A(x,h/k)=\sideset{}{'}\sum_{n\leq x}a(n)e_k(nh),\label{A(x,r)defn}
\end{align}
where $a(n)$ is the Fourier coefficient of a cusp form of weight $\kappa$. He gave the truncated Vorono\"{i} formula for the error terms arising in the asymptotic formulas of \eqref{D(x,r)defn} and \eqref{A(x,r)defn}, and also studied their mean square estimates. For this, Jutila directly employed the functional equation of the allied Dirichlet series 
\begin{align}
    E(s,h/k)=\sum_{n=1}^{\infty}d(n)e(nh/k)n^{-s}\hspace{2mm},\hspace{2mm}(\Re(s)>1)\label{E(sh/k)defn}
\end{align}
and
\begin{align}
    \phi(s,h/k)=\sum_{n=1}^{\infty}a(n)e(nh/k)n^{-s}\hspace{2mm},\hspace{2mm}(\Re(s)>(\kappa+1)/2).
\end{align}

He also gave identities for the general sum functions $D_a(x,h/k)$ and $A_a(x,h/k)$ defined via Riesz means (see [\cite{JM}, eq. 1.6.1-1.6.2]), and obtained the Vorono\"{i} summation formula for the corresponding error term after studying their convergence properties [\cite{JM}, Chapter 1].\\

Inspired by the Dirichlet's classical divisor function $d(n)=\sum_{d|n}1$, Minamide \cite{MM} in 2013 introduced a "new divisor function" $D_{(k)}(n)$ defined by
\begin{align}
    D_{(k)}(n)=\sum_{d|n}(\log d)^k\left(\log \frac{n}{d}\right)^k,\label{D_k(n)formula}
\end{align}
for any integer $n\geq 1$ and $k$ any natural number. This function arises as $n$th Dirichlet coefficient of the square of the $k$th derivative of the Riemann zeta function, $\zeta^{(k)}(s)$ i.e., the $n$th coefficient in the Dirichlet series of $\left(\zeta^{(k)}(s)\right)^2$ for $\Re(s)>1$. Further, the error term $\Delta_{(k)}(x)$ in the related divisor problem $\sum_{n\leq x}D_{(k)}(n)$ is given by
\begin{align}
    \Delta_{(k)}(x)=\sum_{n\leq x}D_{(k)}(n)-xP_{2k+1}(\log x),
\end{align}
where $P_j(t)$ is a degree $j$ polynomial in $t$, explicitly given in [\cite{MM}, eq. (10)]. From this, Minamide studied the behaviour of $\Delta_{(k)}(x)$ and estimated the upper bound
\begin{align}
    \Delta_{(k)}(x)=O(x^{1/3+\varepsilon}).
\end{align}
Clearly, $D_{(0)}(n)=d(n)$. Moreover, as an analogue of \eqref{divisorproblemformula}, he also gave the truncated Vorono\"{i} formula and the mean square estimate for $\Delta_{(1)}(x)$. Other kinds of generalization of the divisor function \eqref{D_k(n)formula} can be found in \cite{FMT} and \cite{FMT2}. \\

Our object in this paper is to study an interesting modification to Jutila's divisor problem \eqref{D(x,r)defn}, where we replace $d(n)$ by the new divisor function or the weighted divisor function defined in \eqref{D_k(n)formula}. To this end, we define the Riesz mean
\begin{align}
    B_a(x,h/k)=\frac{1}{a!}\sideset{}{'}\sum_{n\leq x}D_{(1)}(n)e(nh/k)(x-n)^a,\label{Rieszdefn}
\end{align}
where $a$ is a nonnegative integer and $h/k$ is a rational number. In particular, we write
\begin{align}\label{Bxr}
    B_0(x,h/k)=B(x,h/k)=\sideset{}{'}\sum_{n\leq x}D_{(1)}(n)e(nh/k),
\end{align}
and $\Delta_a(x,h/k)$ denotes the error term in the asymptotic formula of $B_a(x,h/k)$. Our main results include a truncated Voronoï-type formula for $\Delta_0(x,h/k)=\Delta(x,h/k)$, stated in Theorem~\eqref{th1} (Section~\ref{mainresults}). We also provide general identities for $B_a(x,h/k)$ and $\Delta_a(x,h/k)$ in Theorem~\eqref{Generalizationtheorem} for $a\geq 1$. As an application, we investigate mean square estimates for these quantities, with proofs given in Sections~\ref{pfofmeansq1} and~\ref{pfofmeansq2}.


\section{Main Results}\label{mainresults}

We begin by stating the truncated Vorono\"{i} type formula  for the error term of the exponential sum $B(x,h/k)$ defined in \eqref{Bxr}. A necessary ingredient for our paper is the Dirichlet series $F(s,h/k)$ which we define as
\begin{align}\label{Fsh/k}
    F(s,h/k)=\sum_{n=1}^{\infty}\frac{D_{(1)}(n)}{n^s}\hspace{2mm},\hspace{2mm}(\Re(s)>1).
\end{align} Our proofs rely on the analytic continuation of $F(s,h/k)$ to a meromorphic function as well as its functional equation, which will be discussed in Section \ref{prelims}, followed by main proofs in the subsequent sections. 
 \par We also make use of the weighted divisor function
\begin{align}
    d_{(0,1)}(n)=\sum_{d|n}(-\log d), \hspace{2mm}(\Re(s)>1). \label{d_(0,1)defn}
\end{align}
that is the $n$th coefficient in the Dirichlet series of $\zeta(s)\zeta'(s)$. This function was previously studied by Minamide \cite{MM} in connection with the weighted divisor function given in \eqref{D_k(n)formula}. 

\begin{thm}\label{th1}
Let $\Delta(x,h/k)$ denote the error term in the asymptotic formula for the exponential sum $B(x,h/k)$. Then for $x\geq 1$, $k\leq x$, and $1\leq N\ll x$, 
\begin{align}
  \Delta(x,h/k)&=(\pi\sqrt{2})^{-1}k^{1/2}x^{1/4}\left[\hspace{1mm}\sum_{n\leq N}\frac{1}{4}d(n)e_k(-n\overline{h})n^{-3/4}\log^2x\cos\bigg(\frac{4\pi\sqrt{nx}}{k}-\frac{\pi}{4}\bigg)\right.\notag\\
   & +\sum_{n\leq N}\left(\frac{1}{2}d(n)\log n+d_{(0,1)}(n)\right)e_k(-n\overline{h})n^{-3/4}\log x\cos\bigg(\frac{4\pi\sqrt{nx}}{k}-\frac{\pi}{4}\bigg)\\
     & \left.+\sum_{n\leq N}\left(\frac{1}{4}d(n)\log^2 n+d_{(0,1)}(n)\log n+D_{(1)}(n)\right)e_k(-n\overline{h})n^{-3/4}\cos\bigg(\frac{4\pi\sqrt{nx}}{k}-\frac{\pi}{4}\bigg)\right] \notag\\
   & \hspace{9.5cm}+O(kx^{\varepsilon+\frac{1}{2}}N^{-\frac{1}{2}}),
    \label{errorformula}
\end{align}
where $d_{(0,1)}(n)$ is defined in \eqref{d_(0,1)defn}. 
\end{thm}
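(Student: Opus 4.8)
The plan is to adapt Jutila's treatment of the classical twisted divisor sum \eqref{D(x,r)defn} to the derivative-squared weight $D_{(1)}(n)$. The essential input is the functional equation for the Dirichlet series $F(s,h/k)$, to be established in Section~\ref{prelims}. Because $D_{(1)}(n)$ is generated by $\left(\zeta'(s)\right)^2$, the guiding principle is that this functional equation should be obtained by differentiating twice the functional equation of the Estermann-type series $E(s,h/k)$ from \eqref{E(sh/k)defn}. Writing $\zeta(s)=\chi(s)\zeta(1-s)$ and differentiating, one has $\left(\zeta'(s)\right)^2=\chi'(s)^2\zeta(1-s)^2-2\chi'(s)\chi(s)\zeta(1-s)\zeta'(1-s)+\chi(s)^2\zeta'(1-s)^2$, and the three resulting families are controlled on the coefficient side by $d(n)$, $d_{(0,1)}(n)$, and $D_{(1)}(n)$ respectively --- precisely the three weights appearing in \eqref{errorformula}. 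The twist $h/k$ sends the dual frequencies to $e_k(-n\overline{h})$, matching the right-hand side.

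With the functional equation available, I would start from Perron's formula
\[
B(x,h/k)=\frac{1}{2\pi i}\int_{(c)}F(s,h/k)\,\frac{x^s}{s}\,ds,\qquad c>1,
\]
and move the line of integration to the left of the critical strip. The pole of $F(s,h/k)$ at $s=1$ (of raised order, coming from the derivatives of $\zeta$) contributes the main term $xP(\log x)$ that is removed in the definition of $\Delta(x,h/k)$, leaving a contour integral on the shifted line. Into this integral I substitute the functional equation, turning the integrand into the dual series indexed by $e_k(-n\overline{h})$, multiplied by products of Gamma factors together with their first and second derivatives inherited from $\chi'$ and $\chi''$.

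Integrating term by term and using the standard Mellin--Barnes representation converts each summand into a Bessel-type function of argument $\tfrac{4\pi\sqrt{nx}}{k}$. The first and second derivatives of the Gamma factors, acting on $x^s n^{s-1}$, release factors of $\log x$ and $\log n$; sorting the expansion of these logarithms into powers reproduces the coefficients $\tfrac14 d(n)\log^2 n+d_{(0,1)}(n)\log n+D_{(1)}(n)$ and the prefactors $\log^2 x$, $\log x$, $1$ attached to the three sums. Invoking the large-argument asymptotics of the Bessel functions then yields the oscillatory factor $\cos\!\big(\tfrac{4\pi\sqrt{nx}}{k}-\tfrac{\pi}{4}\big)$ with amplitude $k^{1/2}x^{1/4}n^{-3/4}$. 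Finally, I would truncate the series at $n=N$ and bound the discarded tail together with the truncation error from Perron's formula and the horizontal contour segments, which collectively give the remainder $O\!\left(kx^{\varepsilon+\frac12}N^{-\frac12}\right)$.

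The main obstacle is the middle stage: pinning down the functional equation with the exact Gamma-factor derivatives, and then tracking how the two differentiations distribute across the Bessel asymptotics, so that the logarithmic powers of $x$ and of $n$ recombine exactly into the pattern of \eqref{errorformula}. A secondary difficulty is keeping the error term uniform in $h$, $k$, and $N$ --- in particular controlling the $k$-dependence carried by the Gauss- and Kloosterman-type sums that appear in the functional equation.
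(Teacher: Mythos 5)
Your overall skeleton --- truncated Perron formula, contour shift past the pole at $s=1$, substitution of a functional equation for $F(s,h/k)$, term-by-term reduction to Bessel-type integrals, truncation at $n=N$ --- is the same as the paper's, and your untwisted heuristic correctly predicts the three coefficient families $d(n)$, $d_{(0,1)}(n)$, $D_{(1)}(n)$. The genuine gap is in the mechanism you propose for the key input: the functional equation of $F(s,h/k)$ \emph{cannot} be obtained by ``differentiating twice the functional equation of the Estermann-type series $E(s,h/k)$'' from \eqref{E(sh/k)defn}. Differentiation in $s$ only ever produces the total weight $\log(mn)=\log n$ of a divisor pair, so
\[
E''(s,h/k)=\sum_{n=1}^{\infty} d(n)\log^2 n\; e(nh/k)\,n^{-s},
\]
whereas
\[
D_{(1)}(n)=\sum_{d\mid n}\log d\,\log(n/d)=\tfrac12 d(n)\log^2 n-\sum_{d\mid n}\log^2 d .
\]
Thus $F(s,h/k)=\tfrac12 E''(s,h/k)-\sum_{n}\bigl(\sum_{d\mid n}\log^2 d\bigr)e(nh/k)n^{-s}$, and the second series (the additive twist of $\zeta(s)\zeta''(s)$) lies outside the linear span of the $s$-derivatives of $E(s,h/k)$: for example $\sum_{d\mid p}\log^2 d=\tfrac12 d(p)\log^2 p$ but $\sum_{d\mid p^2}\log^2 d=\tfrac{5}{12}\,d(p^2)\log^2(p^2)$, so it is not a constant multiple of $d(n)\log^2 n$. (By contrast $F_0=\tfrac12 E'$, since $d_{(0,1)}(n)=-\tfrac12 d(n)\log n$; the one-variable differentiation trick works for the first derivative but breaks at the second, which is likely the source of the confusion.)

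What is actually needed --- and what the paper does in Lemma \eqref{funceq} --- is to keep the two divisor variables separate: write $F(s,h/k)$ as the bilinear form \eqref{Fexp} in Hurwitz zeta functions, $k^{-2s}\sum_{\alpha,\beta=1}^{k}e_k(\alpha\beta h)\bigl[\zeta'(s,\alpha/k)-\log k\,\zeta(s,\alpha/k)\bigr]\bigl[\zeta'(s,\beta/k)-\log k\,\zeta(s,\beta/k)\bigr]$, and apply Hurwitz's formula \eqref{zetaexp} and its derivative \eqref{zeta'exp} to each factor separately; equivalently, one may differentiate a two-variable series $\sum_{m,n}e_k(mnh)m^{-u}n^{-v}$ once in $u$ and once in $v$ before setting $u=v=s$. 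Either route produces the relation \eqref{funceqexp}, in which both twists $\pm\overline{h}/k$ and both characters $\chi,\chi_1$ appear; the $+\overline{h}/k$ terms carry $e^{-\pi|t|}$ and are absorbed into the error (this is why only $e_k(-n\overline{h})$ survives in \eqref{errorformula}), a point your outline skips. Once the corrected functional equation is in hand, the rest of your plan matches the paper, with one technical caveat: the tail $n>N$ can only be discarded with error $O(kx^{\varepsilon+\frac12}N^{-\frac12})$ because the Perron height $T$ is coupled to the truncation point by $T^2k^2(4\pi^2x)^{-1}=N+\tfrac12$, which feeds the exponential-integral lemma (Lemma \eqref{expintegrallemma}); a truncation performed independently of $T$ would not yield the stated remainder.
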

Taking $N=k^{2/3}x^{1/3}$ and estimating the sums on the right of \eqref{errorformula} by absolute values, we obtain the following estimate for $\Delta(x,h/k)$,
\begin{cor}
For $x\geq 1$ and $k\leq x$, we have
\begin{align}\label{deltabound}
    \Delta(x,h/k)\ll k^{2/3}x^{1/3+\varepsilon}.
\end{align}
\end{cor}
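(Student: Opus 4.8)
The plan is to deduce the stated bound directly from the truncated Voronoï-type formula in Theorem~\ref{th1}, by estimating the three exponential sums trivially and then optimizing the free parameter $N$. First I would record the elementary pointwise bounds on the arithmetic coefficients appearing in the sums. Since $d(n)\ll n^{\varepsilon}$, and since $d_{(0,1)}(n)=\sum_{d\mid n}(-\log d)$ and $D_{(1)}(n)=\sum_{d\mid n}(\log d)\log(n/d)$ are each bounded in absolute value by $d(n)$ times a power of $\log n$, all three coefficients satisfy a bound of the form $\ll n^{\varepsilon}$. Using $|e_k(-n\overline{h})|=1$ and $|\cos(\cdots)|\leq 1$, every summand in the three sums is therefore $\ll n^{-3/4+\varepsilon}$ up to factors of $\log x$ and $\log n$, all of which I would absorb into $x^{\varepsilon}$ since $n\leq N\ll x$.

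Next I would evaluate the resulting sum. Because the exponent $-3/4>-1$, partial summation (or comparison with an integral) gives
\begin{align}
\sum_{n\leq N}n^{-3/4+\varepsilon}\ll N^{1/4+\varepsilon}.
\end{align}
Hence the bracketed expression in \eqref{errorformula}, multiplied by the prefactor $k^{1/2}x^{1/4}$, contributes $\ll k^{1/2}x^{1/4+\varepsilon}N^{1/4}$, and combining this with the error term of Theorem~\ref{th1} yields
\begin{align}
\Delta(x,h/k)\ll k^{1/2}x^{1/4+\varepsilon}N^{1/4}+kx^{1/2+\varepsilon}N^{-1/2}.
\end{align}

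The final step is the optimization over $N$. Balancing the two contributions by setting $k^{1/2}x^{1/4}N^{1/4}=kx^{1/2}N^{-1/2}$ leads to $N^{3/4}=k^{1/2}x^{1/4}$, i.e.\ the choice $N=k^{2/3}x^{1/3}$ suggested in the statement; substituting this value makes both terms equal to $k^{2/3}x^{1/3}$, and after reinstating the $x^{\varepsilon}$ factor I obtain $\Delta(x,h/k)\ll k^{2/3}x^{1/3+\varepsilon}$. The one point requiring a small check is that this choice of $N$ is admissible for Theorem~\ref{th1}: since $k\leq x$ we have $N=k^{2/3}x^{1/3}\leq x$, so the hypothesis $1\leq N\ll x$ holds. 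The argument is essentially a routine balancing computation, so the main (though mild) obstacle is simply the bookkeeping of the logarithmic factors and verifying that the trivial bounds on $d_{(0,1)}(n)$ and $D_{(1)}(n)$ suffice; no cancellation in the exponential sums is needed.
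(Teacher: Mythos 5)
Your proof is correct and follows exactly the paper's route: the paper deduces the corollary by choosing $N=k^{2/3}x^{1/3}$ in Theorem~\ref{th1} and estimating the three sums by absolute values, which is precisely your trivial-bound-plus-balancing argument (including the check that $k\leq x$ makes this $N$ admissible). Your bookkeeping of the coefficient bounds $d(n), |d_{(0,1)}(n)|, D_{(1)}(n)\ll n^{\varepsilon}$ and the sum $\sum_{n\leq N}n^{-3/4+\varepsilon}\ll N^{1/4+\varepsilon}$ matches what the paper leaves implicit.
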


As another application of Theorem \eqref{th1}, we deduce the mean square formula for $\Delta(x,h/k)$.

\begin{thm}\label{Delta0meansqtheorem}
For $k\ll X^{\frac12-\varepsilon}$, we have
    \begin{align}
        \int_1^{X}|\Delta(x,h/k)|^2dx=\frac{k}{6\pi^2}X^{3/2}\sum_{n=1}^{\infty}&n^{-3/2}\bigg[\frac{1}{16}d^2(n)\sum_{i=0}^{4}\left(-\frac{2}{3}\right)^{i}\frac{4!}{(4-i)!}\hspace{1mm}(\log X)^{4-i}\\
    &+\frac{1}{2}d(n)f_1(n)\sum_{i=0}^{3}\left(-\frac{2}{3}\right)^{i}\frac{3!}{(3-i)!}\hspace{1mm}(\log X)^{3-i}\\
    &+\frac{1}{2}(d(n)f_2(n)+f_1^2(n))\sum_{i=0}^{2}\left(-\frac{2}{3}\right)^{i}\frac{2!}{(2-i)!}\hspace{1mm}(\log X)^{2-i}\\
    &+2f_1(n)f_2(n)\sum_{i=0}^{1}\left(-\frac{2}{3}\right)^{i}\frac{1!}{(1-i)!}\hspace{1mm}(\log X)^{1-i}+f_2^2(n)\bigg]\\
&+O(k^{3/2}X^{5/4+\varepsilon}),\label{meansquareestimate}
    \end{align}
 where the arithmetic functions $f_1(n)$ and $f_2(n)$ are given by
    \begin{align}
        f_1(n)=\frac{1}{2}d(n)\log n+d_{(0,1)}(n)\hspace{3mm} \text{and} \hspace{3mm} f_2(n)=\frac{1}{4}d(n)\log^2n+d_{(0,1)}(n)\log n+D_{(1)}(n).\label{f1f2exp}
    \end{align}
\end{thm}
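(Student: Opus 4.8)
The plan is to substitute the truncated Voronoï-type formula of Theorem \ref{th1} into $\int_1^X|\Delta(x,h/k)|^2\,dx$ and expand the square. Write $\Delta(x,h/k)=M(x)+E(x)$, where
$$M(x)=(\pi\sqrt 2)^{-1}k^{1/2}x^{1/4}\sum_{n\leq N}g_n(x)\,e_k(-n\overline h)\,n^{-3/4}\cos\Big(\tfrac{4\pi\sqrt{nx}}{k}-\tfrac{\pi}{4}\Big),\qquad g_n(x)=\tfrac14 d(n)\log^2 x+f_1(n)\log x+f_2(n),$$
and $E(x)\ll kx^{1/2+\varepsilon}N^{-1/2}$. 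I would first dispose of the contributions of $E$. Choosing $N\asymp X$, one has $\int_1^X E(x)^2\,dx\ll k^2X^2N^{-1}\ll k^2X$, and, using the expected size $\int_1^X M(x)^2\,dx\ll kX^{3/2+\varepsilon}$ together with Cauchy--Schwarz, the cross term satisfies $\int_1^X M(x)E(x)\,dx\ll k^{3/2}X^{5/4+\varepsilon}$; both lie within the claimed error once $k\ll X^{1/2-\varepsilon}$. On the initial range $1\le x<k$, where Theorem \ref{th1} does not directly apply, the trivial bound $\Delta(x,h/k)\ll x^{1+\varepsilon}$ gives a contribution $\ll k^{3+\varepsilon}\ll k^{3/2}X^{5/4+\varepsilon}$.

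The heart of the argument is $\int_1^X M(x)^2\,dx$. Expanding $M(x)^2$ produces a double sum over $m,n\leq N$ weighted by $e_k(-(m-n)\overline h)(mn)^{-3/4}g_m(x)g_n(x)$ times a product of cosines. For the diagonal $m=n$ I would use $\cos^2\theta=\tfrac12(1+\cos 2\theta)$; pulling out $(\pi\sqrt2)^{-2}k\,x^{1/2}\cdot\tfrac12=\tfrac{k}{4\pi^2}x^{1/2}$, the constant part yields $\tfrac{k}{4\pi^2}\sum_{n\leq N}n^{-3/2}\int_1^X x^{1/2}g_n(x)^2\,dx$. The only genuine computation here is the elementary evaluation
$$\int_1^X x^{1/2}(\log x)^j\,dx=\tfrac23 X^{3/2}\sum_{i=0}^{j}\Big(-\tfrac23\Big)^{i}\frac{j!}{(j-i)!}(\log X)^{j-i}+O(1),$$
obtained by repeated integration by parts; expanding $g_n(x)^2$ and integrating term by term reproduces exactly the bracketed polynomial in $\log X$ appearing in \eqref{meansquareestimate}, the overall constant being $\tfrac{k}{4\pi^2}\cdot\tfrac23=\tfrac{k}{6\pi^2}$. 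Replacing $\sum_{n\leq N}$ by $\sum_{n=1}^\infty$ is legitimate because the relevant coefficient Dirichlet series converge absolutely (the coefficients are $\ll n^{\varepsilon}$), the tail being $\ll kX^{3/2+\varepsilon}N^{-1/2}\ll kX^{1+\varepsilon}$.

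It remains to show that every oscillatory piece is $\ll k^2X^{1+\varepsilon}$, hence absorbed into the error. The diagonal term $\cos 2\theta=\sin(8\pi\sqrt{nx}/k)$ has phase derivative $\gg\sqrt n/(k\sqrt x)$, so the first derivative test bounds its integral by $\ll kX(\log X)^{O(1)}n^{-1/2}$, and summation against $n^{-3/2+\varepsilon}$ gives $\ll k^2X^{1+\varepsilon}$. For the off-diagonal $m\neq n$ I would convert the product of cosines into $\cos\big(\tfrac{4\pi(\sqrt m\pm\sqrt n)\sqrt x}{k}\big)$ via the product-to-sum identity. The $+$ combination always has large phase derivative and, by the first derivative test followed by $\sqrt m+\sqrt n\geq2(mn)^{1/4}$, contributes $\ll k^2X^{1+\varepsilon}$. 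The $-$ combination is the main obstacle: I would split according to whether $|\sqrt m-\sqrt n|\geq k/\sqrt X$ or $<k/\sqrt X$. For the ``far'' pairs the first derivative test gives $\ll kX(\log X)^{O(1)}|\sqrt m-\sqrt n|^{-1}$, and after writing $|\sqrt m-\sqrt n|=|m-n|/(\sqrt m+\sqrt n)$, estimating the resulting double sum (splitting the inner sum at $|m-n|\asymp m$) yields $\ll k^2X^{1+\varepsilon}$. For the ``close'' pairs the first derivative test fails, so I would instead invoke the second derivative test with $|\phi''(x)|\asymp|\sqrt m-\sqrt n|/(kx^{3/2})$, obtaining $\ll k^{1/2}X^{5/4}(\log X)^{O(1)}|\sqrt m-\sqrt n|^{-1/2}$, and a dyadic count of the close pairs again gives $\ll k^2X^{1+\varepsilon}$. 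Collecting all pieces and invoking $k\ll X^{1/2-\varepsilon}$, so that $k^2X^{1+\varepsilon}\ll k^{3/2}X^{5/4+\varepsilon}$, completes the proof. I expect the close-pair analysis, where one must pass from the first to the second derivative test and control the dyadic count carefully, to be the main technical difficulty.
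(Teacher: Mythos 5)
Your overall strategy --- substituting the truncated Vorono\"{i} formula of Theorem \ref{th1}, extracting the main term from the diagonal via $\int x^{1/2}\log^j x\,dx$, and disposing of the oscillatory terms with exponential-integral estimates --- is exactly the paper's, and your diagonal computation (the evaluation of $\int_1^X x^{1/2}\log^jx\,dx$, the constant $\tfrac{k}{4\pi^2}\cdot\tfrac{2}{3}=\tfrac{k}{6\pi^2}$, and the tail extension to $\sum_{n=1}^\infty$) is sound. However, there is one genuine gap: you fix a single $N\asymp X$ and apply Theorem \ref{th1} for every $x\in[1,X]$. Theorem \ref{th1} requires $1\leq N\ll x$, and this hypothesis is essential, not cosmetic: in its proof the truncation parameter is tied to the Perron height by $T^2k^2(4\pi^2 x)^{-1}=N+\tfrac12$, and the horizontal-segment and Perron error estimates need $T\ll k^{-1}x$, which forces $N\ll x$. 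So for $x=o(X)$ the decomposition $\Delta=M+E$ with $E\ll kx^{1/2+\varepsilon}N^{-1/2}$ is not available, and your bound $\int_1^X E^2\,dx\ll k^2X^2N^{-1}$ rests on an application of the theorem outside its range of validity; your separate treatment of $1\leq x<k$ addresses the condition $k\leq x$ but not this one. The paper's remedy is dyadic decomposition: for $k\leq T$ one proves $\int_T^{2T}|\Delta(x,h/k)|^2dx=\mathcal{H}(T)+O(k^2T^{1+\varepsilon})+O(k^{3/2}T^{5/4+\varepsilon})$ with the admissible choice $N=T$ (so $N\leq x$ throughout the interval), handles $T\ll k$ by the trivial bound $\int_1^T|\Delta|^2dx\ll k^2\log^6k\,T$, and then sums over $T=X/2,X/4,\dots$, the endpoint-evaluated main terms telescoping to the stated polynomial in $\log X$. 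Your argument goes through essentially verbatim once recast on dyadic intervals, so the repair is routine, but as written the central decomposition step fails.

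A secondary point of comparison: your far/close splitting of the off-diagonal pairs, with a second-derivative test when $|\sqrt m-\sqrt n|<k/\sqrt X$, is unnecessary. The first derivative test (Lemma \ref{expintegrallemma}) does not ``fail'' for close pairs: it is valid for every $m\neq n$ and yields $\ll kT\log^iT\,|\sqrt m-\sqrt n|^{-1}$, and the point of the paper's Lemma \ref{meansqlem1} is that the \emph{weighted sum} of these bounds converges,
\begin{align}
\sum_{\substack{m,n\leq T\\ m\neq n}}(mn)^{-3/4}\,|\sqrt m-\sqrt n|^{-1}\ll\sum_{\substack{m<n\leq T}}m^{-3/4}n^{-1/4}(n-m)^{-1}\ll T^{\varepsilon},
\end{align}
even though individual close-pair terms are large; thus a single application of the first derivative test handles all off-diagonal terms at once. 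Your two-case analysis, with the second-derivative bound $\ll k^{1/2}X^{5/4}|\sqrt m-\sqrt n|^{-1/2}$ and a dyadic count of close pairs, does produce the same bound $k^2X^{1+\varepsilon}$, so this is a complication rather than an error, but it is the spot you flagged as the main technical difficulty and in fact no difficulty is there.
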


We now deduce an identity for the general sum function $B_a(x,h/k)$ for $a\geq 1$ and its corresponding error term, proof of which can be found in Section \ref{pfofgeneralizationth}.
\begin{thm}\label{Generalizationtheorem}
     Let $a\geq 1$ be an integer. Then for $x>0$ and $k\ll x^{\frac12-\varepsilon}$, we have 
    \begin{multline}\label{B_aformula}
        B_a(x,h/k)=\frac{x^{1+a}}{(1+a)!k}\bigg[\frac{1}{6}\log^3 x+\left(\log k-\frac{1}{2}\sum_{n=1}^{a+1}\frac{1}{n}\right)\log^2 x\\
        +\bigg(\log k\bigg(\log k+2\gamma-2\sum_{n=1}^{a+1}1/n\bigg)+\frac{1}{2}\sum_{n=1}^{a+1}\frac{1}{n^2}+\bigg(((a+1)!)^2-\frac{1}{2}\bigg)\bigg(\sum_{n=1}^{a+1}\frac{1}{n}\bigg)^2-2\gamma_1\bigg)\log x\\
    -2\log^3 k+\log^2 k\bigg(2\gamma-\sum_{n=1}^{a+1}\frac{1}{n}\bigg)+\log k\bigg(\sum_{n=1}^{a+1}\frac{1}{n^2}+\bigg((2((a+1)!)^2-1)\sum_{n=1}^{a+1}\frac{1}{n}-2\gamma\bigg)\sum_{n=1}^{a+1}\frac{1}{n}\bigg)\\
    -\frac{1}{3}\sum_{n=1}^{a+1}\frac{1}{n^3}-\bigg(\frac{1}{2}((a+1)!)^2\bigg(\sum_{n=1}^{a+1}\frac{1}{n^2}+\bigg(\sum_{n=1}^{a+1}\frac{1}{n}\bigg)^2\bigg)-\frac{1}{3}\bigg(\sum_{n=1}^{a+1}\frac{1}{n}\bigg)^2-2\gamma_1\bigg)\sum_{n=1}^{a+1}\frac{1}{n}-4\gamma_2\bigg]\\
    +\sum_{n=0}^{a}\frac{(-1)^n}{n!(a-n)!}F(-n,h/k)x^{a-n}+\Delta_a(x,h/k),
    \end{multline}
    where 
    \begin{align}
        \Delta_a(x,h/k)=&-(k/2\pi)^{a}x^{(1+a)/2}\sum_{n=1}^{\infty}\bigg\{\frac{1}{4}d(n)\log^2 x+\left(\frac{1}{2}d(n)\log n+d_{(0,1)}(n)\right)\log x\\
        &+\left(\frac{1}{4}d(n)\log^2 n+d_{(0,1)}(n)\log n+D_{(1)}(n)\right)\bigg\}n^{-(1+a)/2}\times\\
        &\times \left\{e_k(-n\overline{h})Y_{1+a}(4\pi\sqrt{nx}/k)+(-1)^a(2/\pi)e_k(n\overline{h})K_{1+a}(4\pi\sqrt{nx}/k)\right\}\\
        &+O(k^{a+1+\varepsilon}x^{a/2+\varepsilon}).\label{delta_aformula}
    \end{align}   
    where $Y_n(z)$ and $K_n(z)$ represent the Bessel functions of order $n$ defined in \eqref{Ybessel} and \eqref{Kbessel} respectively.
\end{thm}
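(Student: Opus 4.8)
The plan is to realize $B_a(x,h/k)$ as a Perron-type contour integral against the generating Dirichlet series of the coefficients $D_{(1)}(n)e(nh/k)$ and then to shift the line of integration to the left, collecting residues and reshaping the remaining integral by the functional equation developed in Section~\ref{prelims}. Concretely, using the Mellin kernel $\frac{1}{2\pi i}\int_{(c)} u^{s}/\bigl(s(s+1)\cdots(s+a)\bigr)\,ds=\tfrac{1}{a!}\bigl(1-u^{-1}\bigr)^{a}$ for $u\geq 1$ (and $0$ otherwise), the defining relation \eqref{Rieszdefn} gives, for $c>1$,
\[
B_a(x,h/k)=\frac{1}{2\pi i}\int_{(c)}\frac{x^{s+a}}{s(s+1)\cdots(s+a)}\,F(s,h/k)\,ds ,
\]
where $F(s,h/k)$ is the Dirichlet series in \eqref{Fsh/k} associated to $D_{(1)}(n)e(nh/k)$. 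The interchange of summation and integration is legitimate since the series converges absolutely for $\Re(s)=c>1$, as $D_{(1)}(n)\ll_\varepsilon n^{\varepsilon}$.

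Next I would move the contour to a line $\Re(s)=-\lambda$ with $\lambda$ large enough to cross every relevant pole. The integrand is meromorphic with a pole of order four at $s=1$, inherited from the order-four pole of the $(\zeta^{(1)})^2$-type behaviour of $F(s,h/k)$, together with simple poles at $s=0,-1,\dots,-a$ produced by the factor $1/\bigl(s(s+1)\cdots(s+a)\bigr)$. The residue at $s=1$, computed from the fourth-order Laurent expansion of $F(s,h/k)$ about $s=1$ (whose coefficients carry the Stieltjes-type constants $\gamma,\gamma_1,\gamma_2$ and the $\log k$ contributions coming from the twist) combined with the Taylor data of $x^{s+a}/\bigl(s(s+1)\cdots(s+a)\bigr)$, yields the main polynomial term $\tfrac{x^{1+a}}{(1+a)!\,k}[\,\cdots]$. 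The simple poles contribute $\sum_{n=0}^{a}\frac{(-1)^n}{n!(a-n)!}F(-n,h/k)x^{a-n}$, since the residue of $1/\bigl(s(s+1)\cdots(s+a)\bigr)$ at $s=-n$ equals $(-1)^n/\bigl(n!(a-n)!\bigr)$, and $F$ is regular at these points by its analytic continuation.

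The core of the argument is the evaluation of the shifted integral. After inserting the functional equation for $F(s,h/k)$, which expresses it (up to gamma and trigonometric factors) as a dual series in $1-s$ whose coefficients assemble from $d(n)$, $d(n)\log n$, $d_{(0,1)}(n)$ and $D_{(1)}(n)$, I would interchange summation and integration—permissible here because the Riesz weight of order $a\geq 1$ supplies the extra decay needed for absolute convergence—and evaluate each resulting integral by a Mellin--Barnes representation. Each such integral has the form $\frac{1}{2\pi i}\int(\text{gamma factors})\,y^{-s}\,ds$ and collapses to Bessel functions of order $1+a$; the two arcs of the functional equation generate the $Y_{1+a}$ and $K_{1+a}$ pieces together with the conjugate twists $e_k(-n\overline h)$ and $e_k(n\overline h)$. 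Bookkeeping of the coefficients produces exactly the combinations $\tfrac14 d(n)\log^2 x$, $\bigl(\tfrac12 d(n)\log n+d_{(0,1)}(n)\bigr)\log x$ and $\tfrac14 d(n)\log^2 n+d_{(0,1)}(n)\log n+D_{(1)}(n)$ displayed in \eqref{delta_aformula}, the split between $\log x$ and $\log n$ reflecting whether the two derivatives underlying $(\zeta^{(1)})^2$ fall on $x^{s+a}$ or on the gamma/coefficient factors. The contributions of the horizontal segments and of the far-left vertical line are then absorbed into the error $O(k^{a+1+\varepsilon}x^{a/2+\varepsilon})$.

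The main obstacle I anticipate is twofold. First, computing the order-four residue at $s=1$ accurately enough to reproduce every term of the main polynomial, including the precise dependence on $\log k$, on $\gamma_1,\gamma_2$, and on the factorial sums $\sum_{n\le a+1}1/n^j$, which demands the full fourth-order Laurent data of $F(s,h/k)$ from Section~\ref{prelims}. Second, justifying the termwise passage to Bessel functions together with a clean tail bound, so that the error is genuinely $O(k^{a+1+\varepsilon}x^{a/2+\varepsilon})$ uniformly in the stated range $k\ll x^{1/2-\varepsilon}$. The absolute convergence of the Bessel series (hence the fact that, unlike the $a=0$ case of Theorem~\ref{th1}, no truncation is required) rests on the decay of $Y_{1+a}$ and $K_{1+a}$ for $a\geq 1$, which I would verify through their standard asymptotic expansions.
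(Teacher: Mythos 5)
Your overall strategy coincides with the paper's (Section~\ref{pfofgeneralizationth}): the Riesz--Perron representation $B_a(x,h/k)=\frac{1}{2\pi i}\int_{(c)}F(s,h/k)\,x^{s+a}(s(s+1)\cdots(s+a))^{-1}ds$, collection of the residues at $s=1$ (fourth-order pole, main polynomial) and at $s=0,-1,\dots,-a$ (kernel poles, giving the terms $\frac{(-1)^n}{n!(a-n)!}F(-n,h/k)x^{a-n}$), then insertion of the functional equation \eqref{funceqexp} and termwise evaluation of the dual integrals as Bessel functions. Your kernel identity, your residue computation at $s=-n$, and your remark that absolute convergence of the $Y_{1+a}$, $K_{1+a}$ series for $a\geq 1$ removes the need for truncation are all correct and agree with the paper.

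However, one central step would fail as written: ``move the contour to a line $\Re(s)=-\lambda$ with $\lambda$ large enough to cross every relevant pole.'' To cross the kernel pole at $s=-a$ a straight line needs $\lambda>a$; but on $\sigma=-\lambda$ the functional-equation side of the integrand has modulus $\asymp (|t|/2\pi)^{1+2\lambda}\cdot|t|^{-(a+1)}=|t|^{2\lambda-a}$ (by Lemma~\ref{chiestimate}, $|\chi(s)|^2\asymp(|t|/2\pi)^{1-2\sigma}$, against which the Riesz kernel contributes only $|t|^{-(a+1)}$), so for $\lambda>a$ the amplitude grows faster than $|t|^{a}\geq|t|$. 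This growth cannot be rescued by oscillation: the phase $E(t)+t\log X_n$ has derivative only $O(\log|t|)$, so each integration by parts trades one power of $t$ for a single factor of $\log|t|$ and leaves boundary terms of modulus $\asymp |t|^{2\lambda-a}/\log|t|\to\infty$; the termwise improper integrals simply do not exist, and your claim that ``the Riesz weight of order $a\geq1$ supplies the extra decay needed for absolute convergence'' is exactly what breaks down. This is why the paper (following Jutila) uses not a straight line but the broken contour $C_a$ of Lemma~\ref{integraltobessel}: infinite vertical rays at $\Re(s)=-a/2+\varepsilon$, which is the deepest abscissa at which the integrand is still $\ll|t|^{-2\varepsilon}$ and hence integrable (conditionally), joined to a \emph{bounded} vertical segment at $\Re(s)=-a-1/2$ that dips to the left of all kernel poles. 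That single contour simultaneously collects all the residues $s=1,0,-1,\dots,-a$ upon deformation from $(c)$, keeps the infinite parts inside the convergence region, and is precisely the contour on which the Mellin--Barnes integrals $I_1,I_2$ of Lemma~\ref{integraltobessel} are known to equal $Y_{a+1}$ and $K_{a+1}$. Replacing your straight line by this broken contour (together with the Phragm\'en--Lindel\"of bound on $F$ used to justify the deformation) is not a cosmetic repair but the missing idea that makes the central evaluation possible.
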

The following theorem contains an analogous mean square estimate for $\Delta_a(x,h/k)$, $a\geq 1$.
\begin{thm}\label{Deltaameansqformula} For $k\ll x^{\frac12-\varepsilon}$ and $a\geq 1$, we have
    \begin{align}
        \int_1^{X}|\Delta_a(x,h/k)|^2dx=&\frac{1}{2\pi (a+3/2)}(k/2\pi)^{2a+1}X^{a+3/2}\sum_{n=1}^{\infty}n^{-(a+3/2)}\\
&\times\bigg[\frac{1}{16}d^2(n)\sum_{i=0}^{4}\left(\frac{-1}{a+3/2}\right)^{i}\frac{4!}{(4-i)!}\hspace{0.5mm}\log^{4-i}X\\
&+\frac{1}{2}d(n)f_1(n)\sum_{i=0}^{3}\left(\frac{-1}{a+3/2}\right)^{i}\frac{3!}{(3-i)!}\hspace{0.5mm}\log^{3-i}X\\
&+\frac{1}{2}\left(d(n)f_2(n)+f_1^2(n)\right)\sum_{i=0}^{2}\left(\frac{-1}{a+3/2}\right)^{i}\frac{2!}{(2-i)!}\hspace{0.5mm}\log^{2-i}X\\
&+2f_1(n)f_2(n)\sum_{i=0}^{1}\left(\frac{-1}{a+3/2}\right)^{i}\frac{1!}{(1-i)!}\hspace{0.5mm}\log^{1-i}X+f_2^{2}(n)\bigg]\\
&+O(k^{2a+\frac{3}{2}}X^{a+\frac{5}{4}+\varepsilon}),\label{deltaameansqestimate}
    \end{align}
    where $f_1(n)$ and $f_2(n)$ are as defined in \eqref{f1f2exp}.
\end{thm}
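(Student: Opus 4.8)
The plan is to start from the truncated Voronoï-type expansion \eqref{delta_aformula} for $\Delta_a(x,h/k)$ established in Theorem~\ref{Generalizationtheorem} and to compute $\int_1^X|\Delta_a(x,h/k)|^2\,dx$ by the diagonal method. First I would replace the Bessel functions $Y_{1+a}$ and $K_{1+a}$ by their large-argument asymptotics: since $K_{1+a}(z)$ decays like $e^{-z}$, the entire $K$-contribution is exponentially small and negligible, while $Y_{1+a}(z)=\sqrt{2/(\pi z)}\cos\!\big(z-\tfrac{(1+a)\pi}{2}-\tfrac\pi4\big)+O(z^{-3/2})$ turns the $Y$-part into an oscillating series. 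Writing $g(n;x)=\tfrac14 d(n)\log^2x+f_1(n)\log x+f_2(n)$ with $f_1,f_2$ as in \eqref{f1f2exp}, the leading piece of $\Delta_a$ becomes, up to a constant,
\begin{align*}
T(x)=-\Big(\tfrac{k}{2\pi}\Big)^a x^{(1+a)/2}\sum_{n=1}^\infty g(n;x)\,n^{-(1+a)/2}\,e_k(-n\overline h)\,Y_{1+a}\!\Big(\tfrac{4\pi\sqrt{nx}}{k}\Big).
\end{align*}
A key simplification is that for $a\ge1$ the exponent $a/2+3/4$ exceeds $1$, so after inserting the Bessel amplitude the resulting Dirichlet series converges absolutely; unlike the case $a=0$ treated in Theorem~\ref{Delta0meansqtheorem}, no truncation parameter is needed and the interchange of summation and integration is justified directly.

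Next I would expand $|T(x)|^2=T(x)\overline{T(x)}$ and integrate term by term. On the diagonal $m=n$ the twists $e_k(-n\overline h)$ cancel and $Y_{1+a}^2\sim\tfrac{2}{\pi z}\cos^2(\cdots)=\tfrac1{\pi z}\big(1+\cos(2z-\cdots)\big)$; the smooth part $\tfrac1{\pi z}$ produces, after collecting the constant factors and the averaging factor $\tfrac12$ from $\langle\cos^2\rangle=\tfrac12$, exactly the prefactor $\tfrac{1}{2\pi}(k/2\pi)^{2a+1}$ of \eqref{deltaameansqestimate}, together with the factor $n^{-(a+3/2)}$ coming from $n^{-(1+a)}\cdot(nx)^{-1/2}$. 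The surviving $x$-integral is $\int_1^X x^{a+1/2}g(n;x)^2\,dx$; expanding $g(n;x)^2$ gives the coefficients $\tfrac1{16}d^2(n),\ \tfrac12 d(n)f_1(n),\ \tfrac12(d(n)f_2(n)+f_1^2(n)),\ 2f_1(n)f_2(n),\ f_2^2(n)$ attached to $\log^4 x,\dots,\log^0 x$, and repeated integration by parts of $\int_1^X x^{a+1/2}(\log x)^j\,dx$ yields $\tfrac{X^{a+3/2}}{a+3/2}\sum_{i=0}^{j}\big(\tfrac{-1}{a+3/2}\big)^i\tfrac{j!}{(j-i)!}(\log X)^{j-i}$. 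This reproduces the main term of \eqref{deltaameansqestimate} verbatim.

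It then remains to bound all error contributions by $O(k^{2a+3/2}X^{a+5/4+\varepsilon})$. The off-diagonal terms $m\ne n$, the self-frequency $\cos(2z)$ part of the diagonal, and the lower-order $O(z^{-3/2})$ Bessel corrections all reduce to oscillatory integrals $\int_1^X x^{a+1/2}(\log x)^j\cos\!\big(\tfrac{4\pi(\sqrt m\pm\sqrt n)\sqrt x}{k}\big)\,dx$; the first derivative test (phase derivative $\asymp\tfrac{\sqrt m\pm\sqrt n}{k\sqrt x}$ on $[1,X]$) bounds each by $\ll\tfrac{kX^{a+1}(\log X)^j}{|\sqrt m\pm\sqrt n|}$, and summing against $(mn)^{-(a/2+3/4)}$ (convergent for $a\ge1$) gives a total $\ll k^{2a+2}X^{a+1+\varepsilon}$, which is $\ll k^{2a+3/2}X^{a+5/4+\varepsilon}$ whenever $k\ll X^{1/2-\varepsilon}$. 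The dominant error comes instead from the Voronoï remainder $E=O(k^{a+1+\varepsilon}x^{a/2+\varepsilon})$ in \eqref{delta_aformula}: writing $\Delta_a=T+E$, the contribution $\int_1^X|E|^2\,dx\ll k^{2a+2}X^{a+1+\varepsilon}$ is absorbed as above, while the cross term $2\Re\int_1^X T\overline E\,dx$ is controlled by Cauchy--Schwarz using the upper bound $\int_1^X|T|^2\,dx\ll k^{2a+1}X^{a+3/2}\log^4X$, which yields $\ll k^{2a+3/2}X^{a+5/4+\varepsilon}$ --- precisely the stated error.

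I expect the main obstacle to be the uniform treatment of the off-diagonal oscillatory sums combined with the exact bookkeeping of the $k$- and $x$-powers, so that every secondary contribution (off-diagonal, self-oscillation, secondary Bessel terms, the $K$-Bessel part, and the $|E|^2$ term) genuinely sits below the cross-term error $k^{2a+3/2}X^{a+5/4+\varepsilon}$. In particular, this requires the standing hypothesis $k\ll X^{1/2-\varepsilon}$ to convert the naive off-diagonal bound $k^{2a+2}X^{a+1}$ into the claimed size, and it requires an independent upper bound on $\int_1^X|T|^2\,dx$ in order to run the Cauchy--Schwarz step that produces the dominant error.
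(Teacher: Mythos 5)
Your route is essentially the paper's own: expand $\Delta_a$ via \eqref{delta_aformula}, replace $Y_{1+a}$ by its oscillatory asymptotic and discard the $K$-part, square and integrate term by term, evaluate the diagonal using $\langle\cos^2\rangle=\tfrac12$ together with repeated integration by parts of $\int x^{a+1/2}\log^jx\,dx$, bound the off-diagonal and self-frequency terms with the first-derivative test (Lemma \ref{expintegrallemma}), and handle the Voronoï remainder by Cauchy--Schwarz. Your bookkeeping also matches: the prefactor $\tfrac{1}{2\pi}(k/2\pi)^{2a+1}$, the off-diagonal total $\ll k^{2a+2}X^{a+1+\varepsilon}$ absorbed under $k\ll X^{1/2-\varepsilon}$, and the dominant cross term $\ll k^{2a+3/2}X^{a+5/4+\varepsilon}$ are exactly the quantities appearing in the paper's proof (which works dyadically on $[T,2T]$ rather than directly on $[1,X]$, an inessential difference).

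There is, however, one genuine gap: you run the argument on all of $[1,X]$, but both of your key ingredients fail on the initial range $x\ll k^2$, which is nonempty and long since $k$ may be as large as $X^{1/2-\varepsilon}$. First, Theorem \ref{Generalizationtheorem} supplies \eqref{delta_aformula} only under the hypothesis $k\ll x^{1/2-\varepsilon}$, so you cannot invoke it when $x\ll k^2$. Second, your Bessel replacements are \emph{large-argument} asymptotics: when $nx\ll k^2$ the argument $4\pi\sqrt{nx}/k$ is small, and there $Y_{1+a}(z)$ and $K_{1+a}(z)$ both blow up like $z^{-(1+a)}$; in particular the claim that the $K$-contribution is exponentially small is false in this range, and the absolute convergence and term-by-term integration you rely on are unjustified. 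The paper closes precisely this hole by splitting the integral: for $T\ll k^2$ it proves the auxiliary bound $F(-n,h/k)\ll k^{2n+1}\log^3k$ (via \eqref{e_kpartialsumbound} and partial summation), deduces $\Delta_a(x,h/k)\ll k^{2a+1}\log^3k$ there, and hence $\int_T^{2T}|\Delta_a|^2dx\ll k^{4a+2}\log^6k\,T$; summed over $T\ll k^2$ this contributes $\ll k^{4a+4}\log^6k$, which is absorbed by $O(k^{2a+3/2}X^{a+5/4+\varepsilon})$ exactly because $k\ll X^{1/2-\varepsilon}$. Only on the complementary range $x>k^2$, where every Bessel argument is at least $4\pi$ and the asymptotics hold uniformly, does the paper carry out the computation you describe. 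Your proof becomes complete once you insert this splitting and the bound on $F(-n,h/k)$.
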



\section{Preliminaries}\label{prelims}
This section contains a series of lemmas which will be used repeatedly in the proofs of the main results. Throughout this paper, we will consider 
 \begin{align}\label{chichi1}\chi(s)=2(2\pi)^{s-1}\Gamma(1-s)\sin (\pi s/2)\hspace{2mm} \text{and}\hspace{2mm}\chi_1(s)=2(2\pi)^{s-1}\Gamma(1-s)\cos (\pi s/2).\end{align}
We shall make repeated use of Stirling's formula for $\Gamma(s)$. The following version is precise enough for our purposes.

\begin{lem}\label{gammaformula}\cite[p. 38, Section 21]{HR}
Let $\delta<\pi$ be a fixed positive number. Then in any fixed strip $A_1\leq\sigma\leq A_2$ with $|arg\hspace{1.5mm}s|\leq \pi-\delta$ and $|t|\geq 1$, we have
\begin{align}\label{gammabigestimate}
\Gamma(s)=\sqrt{2\pi}\hspace{1mm}|t|^{s-1/2}exp\left(-\frac{\pi|t|}{2}-it+\frac{i\pi}{2}(\sigma-1/2)sgn(t)\right)\left(1+O(1/|t|)\right),
\end{align}
and
\begin{align}\label{modgammaestimate}
    |\Gamma(s)|=\sqrt{2\pi}\hspace{1mm}|t|^{\sigma-1/2}e^{-(\pi/2)|t|}(1+O(1/|t|)).
\end{align}
Hence, we have
\begin{align}\label{gammasmallestimate}
    \Gamma(s)=O(e^{-(\pi/2)|t|}|t|^{\sigma-1/2}).
\end{align}
\end{lem}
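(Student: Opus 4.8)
The plan is to derive everything from the classical Stirling asymptotic expansion of $\log\Gamma$ and then specialize it to the strip by a careful expansion of the complex logarithm. First I would record the standard expansion
\[
\log\Gamma(s)=\left(s-\tfrac12\right)\log s - s + \tfrac12\log(2\pi) + O(1/|s|),
\]
valid uniformly in any sector $|\arg s|\le\pi-\delta$ as $|s|\to\infty$; this follows from Binet's integral formula for $\Re(s)>0$ (estimating $\int_0^\infty \frac{\lfloor u\rfloor - u + 1/2}{u+s}\,du = O(1/|s|)$), extended to the cut plane by the recurrence $\Gamma(s)=\Gamma(s+m)/\prod_{j=0}^{m-1}(s+j)$. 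I would first observe that the region $\{A_1\le\sigma\le A_2,\ |t|\ge1\}$ lies in a fixed sector $|\arg s|\le\pi-\delta_0$ bounded away from the negative real axis (since $|t|\ge1$ keeps $s$ off the poles), so the expansion applies uniformly there, and $O(1/|s|)$ becomes $O(1/|t|)$ because $|s|\asymp|t|$ when $\sigma$ is bounded.

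The heart of the argument is the asymptotics of $\log s$ and $(s-\tfrac12)\log s$ in the strip. Writing $s=\sigma+it$ I would expand
\[
\log|s|=\log|t|+O(1/t^2),\qquad \arg s=\operatorname{sgn}(t)\tfrac{\pi}{2}-\tfrac{\sigma}{t}+O(1/|t|^3),
\]
and multiply out $(s-\tfrac12)\log s=(\sigma-\tfrac12+it)(\log|s|+i\arg s)$, collecting real and imaginary parts. The real part simplifies to $(\sigma-\tfrac12)\log|t|-\tfrac{\pi|t|}{2}+\sigma+O(1/|t|)$, where the contribution $-t\cdot(-\sigma/t)=\sigma$ is decisive, while the imaginary part becomes $t\log|t|+(\sigma-\tfrac12)\operatorname{sgn}(t)\tfrac{\pi}{2}+O(1/|t|)$. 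Adding $-s=-\sigma-it$ and $\tfrac12\log(2\pi)$, the $+\sigma$ cancels the $-\sigma$, giving
\begin{align*}
\operatorname{Re}\log\Gamma(s)&=\left(\sigma-\tfrac12\right)\log|t|-\tfrac{\pi|t|}{2}+\tfrac12\log(2\pi)+O(1/|t|),\\
\operatorname{Im}\log\Gamma(s)&=t\log|t|-t+\left(\sigma-\tfrac12\right)\operatorname{sgn}(t)\tfrac{\pi}{2}+O(1/|t|).
\end{align*}

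Exponentiating the real part yields $|\Gamma(s)|=\sqrt{2\pi}\,|t|^{\sigma-1/2}e^{-\pi|t|/2}(1+O(1/|t|))$, which is \eqref{modgammaestimate}, and \eqref{gammasmallestimate} is then immediate. For the full formula \eqref{gammabigestimate} I would reassemble $\Gamma(s)=\exp(\operatorname{Re}\log\Gamma(s)+i\operatorname{Im}\log\Gamma(s))$, use $|t|^{s-1/2}=|t|^{\sigma-1/2}e^{it\log|t|}$ to absorb the phase $t\log|t|$, and check that the residual phase $-t+(\sigma-\tfrac12)\operatorname{sgn}(t)\tfrac{\pi}{2}$ matches the exponent $-it+\tfrac{i\pi}{2}(\sigma-\tfrac12)\operatorname{sgn}(t)$ of the statement, with every $O(1/|t|)$ error in both modulus and argument collected into the single factor $1+O(1/|t|)$.

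I expect the main obstacle to be bookkeeping rather than conceptual: pinning down the branch of $\log s$ and the sign $\operatorname{sgn}(t)$ uniformly for $t$ of either sign, and verifying that the subdominant pieces of $\arg s=\operatorname{sgn}(t)\tfrac{\pi}{2}-\sigma/t+\cdots$ and $\log|s|=\log|t|+\cdots$ contribute exactly the constant $+\sigma$ (which is annihilated by $-s$) and nothing spurious at leading order. A secondary point requiring care is uniformity across the strip when $A_1<0$: the shift via $\Gamma(s+m)$ must be shown not to alter the displayed form, which follows from $\log(s+j)=\log s+O(1/|s|)$ together with the telescoping of the polynomial corrections into the already-present $(s-\tfrac12)\log s-s$ up to $O(1/|t|)$.
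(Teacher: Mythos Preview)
Your derivation is correct: the expansion of $\log s$ in the strip, the careful tracking of the $+\sigma$ that cancels against $-\sigma$ from $-s$, and the repackaging of $t\log|t|$ into $|t|^{it}$ all check out, and the uniformity remarks (reducing to a fixed sector, handling $A_1<0$ via the functional equation) are the right caveats.

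For comparison, the paper does not actually prove this lemma at all. It is stated with an explicit citation to Rademacher \cite[p.~38, Section~21]{HR} and then used as a black box; no argument is supplied in the text. So you have not reproduced the paper's proof but rather filled in what the paper imports from the literature. Your route is the standard one (Stirling's expansion of $\log\Gamma$ plus the Taylor expansion of $\log s=\log(it)+\log(1-i\sigma/t)$), and it is exactly how Rademacher obtains the formula, so there is no substantive divergence---only that you wrote it out while the authors did not.
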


The following lemma shows the estimate for $\chi(s)$.

\begin{lem}\label{chiestimate}
For $s=\sigma+it$, $|t|\geq1$, and $a\leq\sigma\leq b$ ($a$ and $b$ are arbitrary fixed real numbers), we have
\begin{align}
    \chi(\sigma+it)=\bigg(\frac{|t|}{2\pi}\bigg)^{\frac{1}{2}-\sigma-it}e^{i(t\pm \frac{\pi}{4})}\bigg(1+O\bigg(\frac{1}{|t|}\bigg)\bigg),\label{chi}\\
    \chi^2(\sigma+it)=\bigg(\frac{|t|}{2\pi}\bigg)^{1-2\sigma}e^{iE(t)}\bigg(1+O\bigg(\frac{1}{|t|}\bigg)\bigg),\label{chisquare}
\end{align}
where $E(t)$ is defined as
\begin{align}
   E(t)=
   \begin{cases}
       -2t\log\frac{|t|}{2\pi}+2t+\frac{\pi}{2} & (t>0),\\
       -2t\log\frac{|t|}{2\pi}+2t-\frac{\pi}{2} & (t<0).
   \end{cases}\notag
\end{align}
Moreover assume that $|(\sigma-1)/t|<1$. For the $k$th derivative of $\chi(s)$, we have
\begin{align}
    \chi^{(k)}(\sigma+it)=\chi(\sigma+it)\bigg(-\log\frac{|t|}{2\pi}\bigg)^{k}+O\bigg(|t|^{-\frac{1}{2}-\sigma}(\log |t|)^{k-1}\bigg).\label{chikderivative}
\end{align}
\end{lem}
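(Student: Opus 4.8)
The plan is to read all three displays off Stirling's formula (Lemma~\ref{gammaformula}) applied to $\Gamma(1-s)$, together with the elementary behaviour of $\sin(\pi s/2)$ and $\cot(\pi s/2)$ in the two half-planes. To prove \eqref{chi} I would first write $1-s=(1-\sigma)-it$ and apply \eqref{gammabigestimate} with $\sigma\mapsto 1-\sigma$ and $t\mapsto -t$; since $|-t|=|t|$ and $\mathrm{sgn}(-t)=-\mathrm{sgn}(t)$, this gives $\Gamma(1-s)=\sqrt{2\pi}\,|t|^{1/2-s}\exp\!\big(-\tfrac{\pi|t|}{2}+it-\tfrac{i\pi}{2}(\tfrac12-\sigma)\mathrm{sgn}(t)\big)(1+O(1/|t|))$. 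Next I would treat $\sin(\pi s/2)=(e^{i\pi s/2}-e^{-i\pi s/2})/(2i)$ by keeping only the exponentially dominant term: for $t>0$ the factor $e^{-i\pi s/2}$ dominates and yields $\sin(\pi s/2)=\tfrac12 e^{i\pi(1-\sigma)/2}e^{\pi|t|/2}(1+O(e^{-\pi|t|}))$, with the mirror statement for $t<0$. Multiplying the three factors of $\chi(s)=2(2\pi)^{s-1}\Gamma(1-s)\sin(\pi s/2)$, the exponentials $e^{-\pi|t|/2}$ and $e^{\pi|t|/2}$ cancel, the constants combine as $2\cdot\sqrt{2\pi}\cdot\tfrac12\cdot(2\pi)^{s-1}=(2\pi)^{s-1/2}$ so that $(2\pi)^{s-1/2}|t|^{1/2-s}=(|t|/2\pi)^{1/2-s}$, and a short bookkeeping of the phase gives $it+\tfrac{i\pi}{4}$ when $t>0$ and $it-\tfrac{i\pi}{4}$ when $t<0$, which is exactly the $\pm$ in \eqref{chi}.

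The square \eqref{chisquare} then follows immediately by squaring \eqref{chi}: the exponent $1/2-\sigma-it$ becomes $1-2\sigma-2it$ and $(1+O(1/|t|))^2=1+O(1/|t|)$. Writing $(|t|/2\pi)^{-2it}=e^{-2it\log(|t|/2\pi)}$ and combining with $e^{2i(t\pm\pi/4)}=e^{2it\pm i\pi/2}$ produces the total phase $-2t\log(|t|/2\pi)+2t\pm\pi/2$, matching $E(t)$ in the case $t>0$ (sign $+$) and $t<0$ (sign $-$), while the modulus carries the factor $(|t|/2\pi)^{1-2\sigma}$.

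For the derivative estimate \eqref{chikderivative} I would pass to the logarithmic derivative. Differentiating $\log\chi(s)$ gives the exact identity $g(s):=\chi'(s)/\chi(s)=\log(2\pi)-\psi(1-s)+\tfrac{\pi}{2}\cot(\pi s/2)$, with $\psi=\Gamma'/\Gamma$. The crux is to show $g(s)=-\log(|t|/2\pi)+O(1/|t|)$: the digamma asymptotic $\psi(1-s)=\log(1-s)+O(1/|t|)$ contributes $-\log|t|+i\pi/2$, the constant imaginary part coming from $\arg(1-s)\to -\pi/2$ for $t>0$ (and $+\pi/2$ for $t<0$), while $\tfrac{\pi}{2}\cot(\pi s/2)\to\mp i\pi/2$ with an exponentially small error; the two purely imaginary constants cancel, leaving $\log(2\pi)-\log|t|+O(1/|t|)$. \emph{This cancellation is the main obstacle}, and the delicate part is checking that all the $O(1/|t|)$ corrections — the digamma remainder, the $\arg(1-s)$ shift, and the exponentially small cotangent tail — are genuinely of that order under the hypothesis $|(\sigma-1)/t|<1$. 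With $g$ in hand the same asymptotics give $g^{(m)}(s)=O(1/|t|)$ for every $m\ge 1$, since each differentiation produces polygamma terms $\psi^{(m)}(1-s)=O(|t|^{-m})$ together with exponentially small trigonometric terms. I would then conclude by the Fa\`a di Bruno / Bell-polynomial expansion $\chi^{(k)}(s)/\chi(s)=B_k\big(g,g',\dots,g^{(k-1)}\big)$: the unique top term is $g^{k}=\big(-\log(|t|/2\pi)\big)^{k}+O((\log|t|)^{k-1}/|t|)$, while every remaining term carries at least one factor $g^{(m)}$ with $m\ge1$ and at most $k-2$ factors of $g$, hence is $O((\log|t|)^{k-2}/|t|)$. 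Multiplying through by $|\chi(s)|\asymp|t|^{1/2-\sigma}$ turns these into the stated leading term $\chi(s)\big(-\log(|t|/2\pi)\big)^{k}$ plus the error $O(|t|^{-1/2-\sigma}(\log|t|)^{k-1})$, which completes the proof.
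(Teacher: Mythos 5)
Your proposal is correct, but it is not the paper's route: the paper does not prove this lemma at all, it simply cites Titchmarsh [TM, p.~78, (4.12.3)] for \eqref{chi} and Gonek [SG, p.~133, Lemma 6] for \eqref{chikderivative}, with \eqref{chisquare} left as an immediate consequence of squaring. What you have done is reconstruct, from the paper's own Stirling formula (Lemma \ref{gammaformula}), the content of those two references. Your derivation of \eqref{chi} is the standard one and your phase bookkeeping checks out (exponent $\tfrac{\pi}{2}\bigl(-(\tfrac12-\sigma)+(1-\sigma)\cdot(-1)^0\bigr)$ giving $+\tfrac{\pi}{4}$ for $t>0$ and $-\tfrac{\pi}{4}$ for $t<0$), and \eqref{chisquare} indeed follows by squaring. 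For \eqref{chikderivative}, your starting identity $\chi'/\chi=\log(2\pi)-\psi(1-s)+\tfrac{\pi}{2}\cot(\pi s/2)$ is exactly the paper's own relation \eqref{chi'formula}, since $\chi_1(s)/\chi(s)=\cot(\pi s/2)$; the cancellation of the two constants $\pm i\pi/2$ (from $\arg(1-s)\to\mp\pi/2$ and $\cot(\pi s/2)\to\mp i$) is the right key step, and you correctly identify that the hypothesis $|(\sigma-1)/t|<1$ is what keeps $\arg(1-s)$ bounded away from $\pm\pi$ and $|1-s|\asymp|t|$, so that the digamma asymptotic and all $O(1/|t|)$ remainders are uniform. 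Where you genuinely diverge from the cited source is the final step: Gonek's Lemma 6 (and the paper's remark on \eqref{chi1kderivative}) proceeds by induction on $k$, whereas you use the Fa\`a di Bruno/Bell-polynomial expansion $\chi^{(k)}/\chi=B_k(g,g',\dots,g^{(k-1)})$; these are equivalent in substance (the Bell polynomial is the closed form of that induction), and your observation that every non-leading monomial carries at most $k-2$ factors of $g$ and at least one $g^{(m)}=O(1/|t|)$ correctly yields the error $O\bigl(|t|^{-1/2-\sigma}(\log|t|)^{k-1}\bigr)$ after multiplying by $|\chi(s)|\asymp|t|^{1/2-\sigma}$. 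The trade-off: the paper's citation is economical, while your argument is self-contained within the paper's toolkit and makes explicit the role of the side condition, at the cost of a page of routine verification.
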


\begin{proof}
    For the formula \eqref{chi} see [\cite{TM}, p. 78, (4.12.3)]. One can refer [\cite{SG}, p. 133, Lemma 6] for \eqref{chikderivative}.
\end{proof}

Analogous estimates for $\chi_1(s)$ can be given as follows :

\begin{lem}\label{chi1estimate}
For $s=\sigma+it$, $|t|\geq1$, and $a\leq\sigma\leq b$ ($a$ and $b$ are arbitrary fixed real numbers), we have
\begin{align}
    \chi_1(\sigma+it)=\bigg(\frac{|t|}{2\pi}\bigg)^{\frac{1}{2}-\sigma-it}e^{i(t\mp \frac{\pi}{4})}\bigg(1+O\bigg(\frac{1}{|t|}\bigg)\bigg),\label{chi1}\\
    \chi_{1}^2(\sigma+it)=\bigg(\frac{|t|}{2\pi}\bigg)^{1-2\sigma}e^{iE(t)}\bigg(1+O\bigg(\frac{1}{|t|}\bigg)\bigg),\label{chi1square}
\end{align}
where
\begin{align}
   E(t)=
   \begin{cases}
       -2t\log\frac{|t|}{2\pi}+2t-\frac{\pi}{2} & (t>0),\\
       -2t\log\frac{|t|}{2\pi}+2t+\frac{\pi}{2} & (t<0).
   \end{cases}\notag
\end{align}
Moreover assume that $|(\sigma-2)/t|<1$. For the $k$th derivative of $\chi_1(s)$, we have
\begin{align}
    \chi_1^{(k)}(\sigma+it)=\chi_1(\sigma+it)\bigg(-\log\frac{|t|}{2\pi}\bigg)^{k}+O\bigg(|t|^{-\frac{1}{2}-\sigma}(\log |t|)^{k-1}\bigg).\label{chi1kderivative}
\end{align}
\end{lem}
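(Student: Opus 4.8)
\textbf{Proof proposal for Lemma~\ref{chi1estimate}.}
The plan is to reduce every assertion about $\chi_1(s)$ to the corresponding statement about $\chi(s)$ established in Lemma~\ref{chiestimate}, exploiting the fact that $\chi$ and $\chi_1$ differ only by replacing $\sin(\pi s/2)$ with $\cos(\pi s/2)$ in the definitions \eqref{chichi1}. First I would record the elementary trigonometric identity
\begin{align}
\cos(\pi s/2)=\sin\!\big(\pi(s+1)/2\big)=\sin\!\big(\pi(1-s)/2+\pi s/2\cdot 0\big),\notag
\end{align}
or more usefully the relation $\chi_1(s)=\chi(s)\cot(\pi s/2)$ obtained by dividing the two definitions, together with the shift identity $\chi_1(s)=(2\pi)^{-1}\chi(s+1)\cdot(\text{elementary factor})$; concretely, writing $\chi_1(s)=2(2\pi)^{s-1}\Gamma(1-s)\cos(\pi s/2)$ and comparing with $\chi(s)$ one sees that the only difference between the two asymptotic profiles is the phase contributed by $\cos$ versus $\sin$. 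For large $|t|$ one has $\cos(\pi s/2)=\tfrac12 e^{-i\pi s/2}(1+O(e^{-\pi|t|}))$ for $t>0$ and $\cos(\pi s/2)=\tfrac12 e^{i\pi s/2}(1+O(e^{-\pi|t|}))$ for $t<0$, with the analogous formula $\sin(\pi s/2)=\mp\tfrac{i}{2}e^{\mp i\pi s/2}(1+O(e^{-\pi|t|}))$ for $\chi$; the ratio of these is $\pm i$, i.e. a factor $e^{\pm i\pi/2}$, which is exactly what flips the sign in the phase $e^{i(t\mp\pi/4)}$ relative to $\chi$'s $e^{i(t\pm\pi/4)}$ and swaps the $\pm\pi/2$ in the definition of $E(t)$.

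Carrying this out, for \eqref{chi1} I would substitute the Stirling estimate \eqref{gammabigestimate} for $\Gamma(1-s)$ directly into $\chi_1(s)=2(2\pi)^{s-1}\Gamma(1-s)\cos(\pi s/2)$, using $|1-t|\sim|t|$ and tracking the arguments, exactly as in the cited derivation of \eqref{chi}; the sole modification is that the phase of $\cos(\pi s/2)$ differs from that of $\sin(\pi s/2)$ by $e^{\pm i\pi/2}$, converting the $e^{i(t\pm\pi/4)}$ of \eqref{chi} into $e^{i(t\mp\pi/4)}$. The second assertion \eqref{chi1square} then follows by squaring \eqref{chi1}: squaring the modulus factor $(|t|/2\pi)^{1/2-\sigma}$ gives $(|t|/2\pi)^{1-2\sigma}$, squaring the $e^{-it\log(|t|/2\pi)}$ and $e^{it}$ pieces produces the $-2t\log(|t|/2\pi)+2t$ terms in $E(t)$, and squaring $e^{\mp i\pi/4}$ yields $e^{\mp i\pi/2}$, which is precisely the $\mp\pi/2$ appearing (with the sign opposite to that in Lemma~\ref{chiestimate}) in the stated $E(t)$; the error term $1+O(1/|t|)$ is preserved under squaring since $(1+O(1/|t|))^2=1+O(1/|t|)$.

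For the derivative estimate \eqref{chi1kderivative} I would argue exactly as for \eqref{chikderivative}. Logarithmic differentiation gives $\chi_1'/\chi_1=-\log(|t|/2\pi)+O(1/|t|)$ under the hypothesis $|(\sigma-2)/t|<1$ (the shift from $\sigma-1$ to $\sigma-2$ merely reflects that the relevant gamma-factor pole structure for $\chi_1$ is displaced by one unit), and then an induction on $k$ using the Leibniz rule, together with the bound $\chi_1(\sigma+it)=O(|t|^{1/2-\sigma})$ from \eqref{chi1}, shows that the main term is $\chi_1(s)(-\log(|t|/2\pi))^k$ and every term arising from differentiating the $O(1/|t|)$ remainder or the lower-order logarithmic factors is absorbed into $O(|t|^{-1/2-\sigma}(\log|t|)^{k-1})$. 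Since the full analytic content is identical to the $\chi$ case and is cited from \cite{SG}, the cleanest route is simply to invoke that reference verbatim with $\sin$ replaced by $\cos$. The only point requiring genuine care—and hence the main obstacle—is the sign bookkeeping of the phase: one must verify that the factor $e^{\pm i\pi/2}$ distinguishing $\cos$ from $\sin$ threads correctly through both the linear phase in \eqref{chi1} and, after squaring, through $E(t)$, so that the $t>0$ and $t<0$ cases come out with the stated (interchanged) signs rather than with the signs of Lemma~\ref{chiestimate}.
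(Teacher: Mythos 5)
Your proposal is correct and follows the paper's own route: the paper likewise proves \eqref{chi1} by inserting Stirling's formula \eqref{gammabigestimate} into the definition of $\chi_1$ in \eqref{chichi1} (with \eqref{chi1square} following by squaring), and proves \eqref{chi1kderivative} by induction exactly as for \eqref{chikderivative}, following \cite{SG}. One bookkeeping slip in your intermediate remark: for $t>0$ one has $\sin(\pi s/2)=\tfrac{i}{2}e^{-i\pi s/2}\big(1+O(e^{-\pi t})\big)$, so the conversion factor $\cos(\pi s/2)/\sin(\pi s/2)$ equals $-i=e^{-i\pi/2}$ (i.e.\ $e^{\mp i\pi/2}$ in your sign convention, not $e^{\pm i\pi/2}$); fortunately the final phases $e^{i(t\mp\pi/4)}$ and the $\mp\pi/2$ in $E(t)$ that you state are the correct ones.
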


\begin{proof}
    For the formula \eqref{chi1}, use equation \eqref{gammabigestimate}. 
    Formula \eqref{chi1kderivative} can be proved using induction in the same way as done for the formula \eqref{chikderivative}.
\end{proof}

The next theorem involves a set of complex integrals which can be nicely represented in terms of the Bessel functions, and will be extensively used in the proofs of the main results. The proof follows directly from the theorem of residues.

\begin{lem}\label{integraltobessel}\cite[Lemma 1.5]{JM}
Let $a$ be a nonnegative integer, $\sigma_1\geq-a/2$, $\sigma_2<-a$, $T>0$, and let $C_a$ be the contour joining the points $\sigma_1-i\infty,\sigma_1-Ti,\sigma_2-Ti,\sigma_2+Ti,\sigma_1+Ti,$ and $\sigma_1+i\infty$ by straight lines. Let $X>0$, then
\begin{align}
    I_1=\frac{1}{2\pi i}\int_{C_a}\Gamma^2(1-s)\cos{\pi s}\hspace{1mm}X^s(s(s+1)\cdots(s+a))^{-1}ds=\pi X^{(1-a)/2}Y_{a+1}(2X^{1/2}),\label{firstintegral}
\end{align}
and
\begin{align}
    I_2=\frac{1}{2\pi i}\int_{C_a}\Gamma^2(1-s)X^s(s(s+1)\cdots(s+a))^{-1}ds=2(-1)^{a+1}X^{(1-a)/2}K_{a+1}(2X^{1/2}),\label{secondintegral}
\end{align}
where $Y_n(z)$ and $K_n(z)$ represent the Bessel functions of nonnegative integral order $n$, which can be given as
\begin{align}
    Y_n(X)=(2/\pi X)^{1/2}\sin\bigg(X-\frac{1}{2}n\pi-\frac{1}{4}\pi\bigg)+O(X^{-3/2}),\label{Ybessel}
\end{align}
and
\begin{align}
    K_n(X)=(\pi/2X)^{1/2}e^{-x}(1+O(X^{-1})).\label{Kbessel}
\end{align}
\end{lem}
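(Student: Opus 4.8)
The plan is to evaluate both integrals by the calculus of residues, exploiting the fact that after writing $(s(s+1)\cdots(s+a))^{-1}=\Gamma(s)/\Gamma(s+a+1)$ the only poles of the integrands to the right of $C_a$ are the double poles of $\Gamma^2(1-s)$ located at the positive integers $s=1,2,3,\dots$. The rational factor contributes simple poles at $s=0,-1,\dots,-a$, and the leftward excursion of $C_a$ to the line $\Re s=\sigma_2<-a$ is precisely what separates these from the gamma poles, so that none of them is crossed when the contour is deformed; the constraints $\sigma_1\ge -a/2$ and $\sigma_2<-a$ guarantee this separation. First I would use Stirling's formula from Lemma~\ref{gammaformula}, in the form \eqref{gammasmallestimate}, to bound $\Gamma^2(1-s)$ and $\Gamma^2(1-s)\cos\pi s$ (noting $|\cos\pi s|\ll e^{\pi|t|}$) on vertical lines far to the right. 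Because $|\Gamma(1-s)|$ decays faster than any power of $X^{\Re s}$ grows, the integral over the line $\Re s=M$ tends to $0$ as $M\to\infty$ and the horizontal connectors contribute nothing in the limit; this legitimizes replacing $I_1,I_2$ by the absolutely convergent sum of residues at $s=1,2,\dots$.

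The second step is the residue computation itself. Near $s=m$ one has $\Gamma(1-s)=\frac{(-1)^{m}}{(m-1)!}(s-m)^{-1}\bigl(1+c_m(s-m)+\cdots\bigr)$, with $c_m$ expressible through the digamma function (hence through harmonic numbers and Euler's constant), so $\Gamma^2(1-s)$ has a genuine double pole there. Writing the remaining analytic factor as $g(s)=X^{s}/\bigl(s(s+1)\cdots(s+a)\bigr)$, the residue equals $((m-1)!)^{-2}\bigl(g'(m)+2c_m g(m)\bigr)$, and since $g'(m)=g(m)\bigl(\log X-\sum_{j=0}^{a}(m+j)^{-1}\bigr)$ each term carries an explicit $\log X$. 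Summing over $m$ produces a convergent series in $X$ of the shape $\sum_{m\ge1}\alpha_m X^{m}(\log X+\beta_m)$. For $I_1$ the extra factor $\cos\pi s$ equals $(-1)^{m}$ at $s=m$ and, being even about each integer, introduces no new linear term; its only effect is to multiply the $m$th residue by $(-1)^{m}$, which is exactly the sign flip that turns the monotone $K$-series into the oscillatory $Y$-series.

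The final step is to identify these residue series with the Bessel functions. For integer order, $Y_{a+1}$ and $K_{a+1}$ possess convergent expansions whose logarithmic parts arise from the coalescence of the two families of $\Gamma$-poles — precisely the double poles met above — while the accompanying digamma/Euler-constant coefficients are reproduced by the constants $c_m$. Matching term by term yields $I_1=\pi X^{(1-a)/2}Y_{a+1}(2X^{1/2})$ and $I_2=2(-1)^{a+1}X^{(1-a)/2}K_{a+1}(2X^{1/2})$, consistent with the asymptotics \eqref{Ybessel}--\eqref{Kbessel}. The main obstacle is this last bookkeeping: one must track the signs, the factorial constants, and in particular the harmonic/digamma contributions carefully enough to recognise the standard Bessel series rather than merely a series of the correct qualitative form. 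A clean way to organise and cross-check the identification is to bypass the term-by-term match and instead recognise each integrand as a Mellin--Barnes representation: the substitution $s\mapsto (1-a)/2-s$ together with the reflection formula $\Gamma(s)\Gamma(1-s)=\pi/\sin\pi s$ transforms $I_2$ directly into the classical Mellin--Barnes integral for $K_{a+1}$, and the factor $\cos\pi s$ routes $I_1$ analogously to $Y_{a+1}$; one then only needs to verify that the deformed $C_a$ is an admissible Mellin--Barnes contour, which is again ensured by $\sigma_1\ge -a/2$ and $\sigma_2<-a$.
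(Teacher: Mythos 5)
Your overall strategy --- close $C_a$ to the right, sum residues, and identify the resulting series with the integer-order expansions of $Y_{a+1}$ and $K_{a+1}$ --- is the intended one (the paper itself gives no proof beyond citing Jutila's Lemma 1.5 and remarking that it follows from the residue theorem, with \eqref{Ybessel}--\eqref{Kbessel} taken from Watson). But your execution contains a genuine error: you have mislocated the poles $s=0,-1,\dots,-a$ relative to $C_a$. The leftward excursion of $C_a$ to $\Re s=\sigma_2<-a$ exists precisely to place these poles to the \emph{right} of the contour: they lie on the real axis, hence inside the band $|\Im s|<T$ where the contour runs along $\Re s=\sigma_2$, which is to their left. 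Consequently, when you deform the contour to $\Re s=+\infty$ you \emph{do} cross them, and their residues must be collected alongside the double-pole residues at $s=1,2,3,\dots$. Your claim that ``the only poles to the right of $C_a$ are the double poles of $\Gamma^2(1-s)$'' is false, and the series you produce, $\sum_{m\ge1}\alpha_m X^m(\log X+\beta_m)$, containing only positive powers of $X$, cannot equal either right-hand side: the integer-order expansions of $X^{(1-a)/2}Y_{a+1}(2X^{1/2})$ and $X^{(1-a)/2}K_{a+1}(2X^{1/2})$ contain, besides the logarithmic series, finite sums of the form $\sum_{j=0}^{a}c_jX^{-j}$, and these come exactly from the residues you omitted. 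A concrete contradiction: as $X\to0^{+}$, $K_{a+1}(2X^{1/2})\sim\tfrac12\,a!\,X^{-(a+1)/2}$, so the right side of \eqref{secondintegral} blows up like $(-1)^{a+1}a!\,X^{-a}$ --- which is precisely (minus) the residue of the integrand at $s=-a$ --- whereas your residue sum over $m\ge1$ tends to $0$.

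Two further points. First, closing to the right traverses the closed contour clockwise, so each $I_i$ equals \emph{minus} the sum of residues; this sign is essential to match the Bessel expansions and you never address it. Second, convergence for $I_1$ is more delicate than you allow: since $|\cos\pi s|\asymp e^{\pi|t|}$ exactly cancels the exponential decay of $|\Gamma(1-s)|^2$, the $I_1$-integrand decays only like $|t|^{-2\sigma_1-a}$ on the infinite rays, which at the admissible endpoint $\sigma_1=-a/2$ is not absolutely integrable, so the tails converge only conditionally. This is also why your Mellin--Barnes shortcut is not a ``mere verification'' for $I_1$: on any vertical line $\Re s=\sigma_2<-a$ the integrand with the $\cos\pi s$ factor grows polynomially in $|t|$, so no single admissible vertical line exists, and the bent contour $C_a$ is forced --- this is the whole reason for its shape. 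For $I_2$ the reduction is sound (indeed $\Gamma^2(1-s)\Gamma(s)/\Gamma(s+a+1)=(-1)^{a+1}\Gamma(1-s)\Gamma(-a-s)$ by reflection, which after $s\mapsto 1-t$ is the classical $K$-Bessel integral), but note that even there the admissibility condition is that \emph{all} poles, including $s=0,-1,\dots,-a$, lie to the right of the contour --- again contradicting your stated pole configuration.
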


The proof of \eqref{Ybessel} and \eqref{Kbessel} above can be found in Watson [\cite{GW}, Sections 7.2, 7.21, 7.23].\\

The following elementary lemma on exponential integrals will be of significant use in the proofs that will follow.

\begin{lem}\label{expintegrallemma}\cite[Lemma 4.3, p. 71]{TM}
Let $F(x)$ and $G(x)$ be real functions in the interval $[a,b]$ where $G(x)$ is continuous and $F(x)$ continuously differentiable. Suppose that $G(x)/F'(x)$ is monotonic and $|F'(x)/G(x)|\geq m>0$. Then
$$\left|\int_a^bG(x)e^{iF(x)}dx\right|\leq 4/m.$$
\end{lem}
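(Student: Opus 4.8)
The plan is to establish this first-derivative estimate by a single complex integration by parts, exploiting the two hypotheses --- monotonicity of $\phi := G/F'$ and the lower bound $|F'/G| \ge m$ --- directly, rather than splitting the integral into its real and imaginary parts (which would force the sine and cosine pieces to be bounded separately and yield a worse constant). First I would record the two consequences of the hypotheses that carry the whole argument. The assumption that $G/F'$ is monotonic presupposes $F'(x)\neq 0$ on $[a,b]$, so $\phi = G/F'$ is well defined and of bounded variation; and the lower bound $|F'(x)/G(x)|\ge m>0$ yields $|\phi(x)| = |G(x)/F'(x)|\le 1/m$ throughout $[a,b]$. Monotonicity further gives the total variation identity $V_a^b(\phi) = |\phi(b)-\phi(a)|$.

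The central manipulation is to write the integrand as an exact differential against $\phi$. Since $F$ is continuously differentiable, $g(x):=e^{iF(x)}$ is $C^1$ with $g'(x)=iF'(x)e^{iF(x)}$, so that
\begin{align}
\int_a^b G(x)e^{iF(x)}\,dx = \int_a^b \phi(x)\,F'(x)\,e^{iF(x)}\,dx = \frac{1}{i}\int_a^b \phi(x)\,dg(x). \notag
\end{align}
I would then integrate by parts in the Riemann--Stieltjes sense, obtaining
\begin{align}
\frac{1}{i}\int_a^b \phi\,dg = \frac{1}{i}\Big([\phi g]_a^b - \int_a^b g\,d\phi\Big). \notag
\end{align}
The boundary term satisfies $\big|[\phi g]_a^b\big| \le |\phi(b)|+|\phi(a)| \le 2/m$ since $|g|\equiv 1$, while the remaining Stieltjes integral obeys $\big|\int_a^b g\,d\phi\big| \le \sup_{[a,b]}|g|\cdot V_a^b(\phi) = |\phi(b)-\phi(a)| \le 2/m$, again using $|\phi|\le 1/m$ and monotonicity. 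Adding the two contributions produces exactly the claimed bound $4/m$.

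The only point requiring genuine care is the justification of the integration-by-parts step under the stated regularity: $\phi$ is assumed merely monotonic, not differentiable, so $\int_a^b g\,d\phi$ must be read as a Riemann--Stieltjes integral with monotonic integrator, and the identity $\int_a^b \phi\,dg + \int_a^b g\,d\phi = [\phi g]_a^b$ is invoked in its form valid when one factor ($g$) is continuous and the other ($\phi$) has bounded variation. Once that framework is fixed, the conversion $V_a^b(\phi)=|\phi(b)-\phi(a)|$ is immediate and no further estimation is needed, the constant $4/m$ decomposing as $2/m$ from the endpoints plus $2/m$ from the variation. This regularity bookkeeping is the sole delicate step; everything else is arithmetic. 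Since the lemma is quoted verbatim from Titchmarsh, one may alternatively simply cite \cite[Lemma 4.3, p. 71]{TM}, but the short argument above keeps the present paper self-contained.
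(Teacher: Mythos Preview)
The paper does not supply a proof of this lemma at all; it simply quotes the statement and cites \cite[Lemma 4.3, p.~71]{TM}. Your proposal therefore goes beyond the paper by furnishing a self-contained argument, and the argument is correct: writing $\phi=G/F'$, integrating by parts against $g=e^{iF}$ in the Riemann--Stieltjes sense, and then using $|\phi|\le 1/m$ together with $V_a^b(\phi)=|\phi(b)-\phi(a)|$ yields precisely $2/m+2/m=4/m$. The only technical point you need to watch---and you already flagged it---is that $g$ is complex-valued, so the Stieltjes bound $\bigl|\int_a^b g\,d\phi\bigr|\le \sup|g|\cdot V_a^b(\phi)$ and the integration-by-parts identity are applied to the real and imaginary parts of $g$ separately (or, equivalently, one observes that both extend by linearity to complex continuous integrands against a real monotone integrator). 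With that understood the proof is complete.

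It is worth noting that your direct complex approach is slightly slicker than the textbook route of splitting $e^{iF}$ into $\cos F$ and $\sin F$ and invoking the second mean value theorem on each piece: handling the two pieces separately and recombining naively can inflate the constant, whereas your single integration by parts lands exactly on $4/m$. Since the paper only needs the lemma as a black box, either your argument or the bare citation to Titchmarsh suffices; your closing remark already makes this clear.
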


The lemmas that follow give the Laurent series expansion of the Dirichlet series $F(s,h/k)$ about its singularity, and the functional equation of $F(s,h/k)$ that will be essentially used to study the allied exponential sum, $B(x,h/k)$.
\begin{lem}\label{Fpole}
    The Dirichlet series $F(s,h/k)$ can be continued analytically to a meromorphic function, which is holomorphic in the whole complex plane up to a quadruple pole at $s=1$, and at $s=1$, has the Laurent expansion
\begin{align}
    F(s,h/k)&=k^{-1}(s-1)^{-4}+2k^{-1}\log k\hspace{1mm}(s-1)^{-3}+k^{-1}(\log^{2}k+2\gamma\log k-2\gamma_1)(s-1)^{-2}\notag\\
    &\quad+2k^{-1}(-\log^{3}k+\gamma\log^{2}k-2\gamma_2)(s-1)^{-1}+\ldots,\label{Flaurent}
\end{align}
where the constants $\gamma_1$ and $\gamma_2$ are defined as follows:
\begin{align}
    \zeta(s)=\frac{1}{s-1}+\gamma+\gamma_1(s-1)+\gamma_2(s-1)^2+\ldots.
\end{align}
\end{lem}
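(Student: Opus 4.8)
The plan is to obtain the meromorphic continuation of $F(s,h/k)$ by writing it through Hurwitz zeta functions and then to read off the Laurent expansion at $s=1$ from a purely local computation. First I would use $D_{(1)}(n)=\sum_{de=n}(\log d)(\log e)$, so that for $\Re(s)>1$
\[
F(s,h/k)=\sum_{d,e\ge1}\frac{(\log d)(\log e)}{(de)^{s}}\,e\!\left(\frac{deh}{k}\right),
\]
and then split the $d$-summation into residue classes $d\equiv a\,(k)$. Since $e(deh/k)=e(aeh/k)$ on each class, the double sum factors as $F(s,h/k)=\sum_{a=1}^{k}G_a(s)\,\Psi_a(s)$, where $G_a(s)=\sum_{d\equiv a\,(k)}(\log d)d^{-s}=-\frac{d}{ds}\bigl(k^{-s}\zeta(s,a/k)\bigr)$ and $\Psi_a(s)=\sum_{m\ge1}(\log m)e(amh/k)m^{-s}=-\Phi_a'(s)$ with $\Phi_a(s)=\sum_{m}e(amh/k)m^{-s}$.

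Next I would invoke the classical analytic continuation of the Hurwitz zeta function: $\zeta(s,\alpha)$ extends to a meromorphic function whose only singularity is a simple pole at $s=1$ of residue $1$, with $\zeta(s,\alpha)=(s-1)^{-1}-\psi(\alpha)+O(s-1)$. This continues every $G_a$ to a function with a double pole at $s=1$ whose residue vanishes and whose leading term $k^{-1}(s-1)^{-2}$ is independent of $a$; and, via the orthogonality $\sum_{m=1}^{k}e(amh/k)=k\,\mathbf 1[k\mid a]$, it shows that each $\Phi_a$ is holomorphic at $s=1$ for $a\not\equiv0\,(k)$ while $\Phi_k=\zeta$. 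Hence $F(s,h/k)$ is meromorphic with its only pole at $s=1$, and the orthogonality forces the quadruple pole to come solely from the principal class $a=k$, where $G_k(s)\Psi_k(s)=k^{-s}\bigl((\zeta'(s))^2-\log k\,\zeta(s)\zeta'(s)\bigr)$; the remaining classes $a\not\equiv0\,(k)$, being products of the double pole of $G_a$ with a function holomorphic at $s=1$, can only affect the $(s-1)^{-2}$ and $(s-1)^{-1}$ coefficients.

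Finally I would carry out the local expansion. Substituting $\zeta(s)=(s-1)^{-1}+\gamma+\gamma_1(s-1)+\gamma_2(s-1)^2+\cdots$, its derivative $\zeta'(s)=-(s-1)^{-2}+\gamma_1+2\gamma_2(s-1)+\cdots$, and $k^{-s}=k^{-1}\exp(-(s-1)\log k)$ into $G_k\Psi_k$ produces the principal part of $F$, after which I would add the contributions of the classes $a\not\equiv0\,(k)$, each governed by $k^{-1}(s-1)^{-2}$ multiplied by $\Psi_a(1)$ and by the subleading data of $G_a$. The main obstacle is precisely this last assembly: one must evaluate the character-weighted sums $\sum_a e(abh/k)$ in tandem with the values and first derivatives at $s=1$ of the periodic zeta functions $\Phi_a$ (equivalently the digamma and generalized Stieltjes data attached to $\zeta(s,a/k)$), and verify that after summation over residue classes these collapse into a closed form in $\gamma,\gamma_1,\gamma_2$ and $\log k$. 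I would expect the algebraic bookkeeping of these character-weighted constants, rather than any analytic ingredient, to be the delicate point, and I would double-check the resulting $(s-1)^{-3}$ and $(s-1)^{-2}$ coefficients against the small case $k=2$ (where $F(s,1/2)=(\zeta'(s))^2-2\bigl(\sum_{d\ \mathrm{odd}}(\log d)d^{-s}\bigr)^2$ can be expanded by hand) before trusting the general formula.
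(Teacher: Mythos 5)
Your decomposition $F(s,h/k)=\sum_{a=1}^{k}G_a(s)\Psi_a(s)$ is correct and is a genuinely different route from the paper's: the paper splits \emph{both} convolution variables into residue classes, writes $F$ as a double sum over $\alpha,\beta$ of Hurwitz zeta data weighted by $e_k(\alpha\beta h)$, and then avoids all class-by-class constants by comparing $F$ with the function $G(s,h/k)=\log^2 k\,E(s,h/k)-2k^{1-2s}\log k\,\zeta(s)\zeta'(s)+k^{1-2s}(\zeta'(s))^2$ and checking that $F-G$ is holomorphic at $s=1$. Your single-variable splitting achieves the same localization, and the bookkeeping you defer as ``delicate'' is in fact closed-form: since every $G_a$ has principal part $k^{-1}(s-1)^{-2}$ with no $(s-1)^{-1}$ term, the classes $a\not\equiv 0\ (\mathrm{mod}\ k)$ contribute $k^{-1}(s-1)^{-2}\Sigma(s)$ modulo holomorphic functions, where by your orthogonality relation $\Sigma(s)=\sum_{a=1}^{k-1}\Psi_a(s)=-\frac{d}{ds}\bigl[(k^{1-s}-1)\zeta(s)\bigr]$ is holomorphic at $s=1$; no digamma values or generalized Stieltjes constants survive the summation.

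The genuine problem is that executing this (correct) plan does not reproduce \eqref{Flaurent} --- it contradicts it, and the fault lies in the statement rather than in your method. Write $u=s-1$, $L=\log k$. Every $G_a$, and also $\Psi_k=-\zeta'$, is a double pole with vanishing residue (each is the derivative of a function with a constant-residue simple pole), so $F=\sum_a G_a\Psi_a$ can have \emph{no} $u^{-3}$ term at all: in $G_k\Psi_k=k^{-s}\bigl[(\zeta')^2-L\,\zeta\zeta'\bigr]$ the term $Lu^{-3}$ of $(\zeta')^2-L\zeta\zeta'$ is exactly cancelled by the $-uL$ term of $k^{-s}=k^{-1}e^{-uL}$. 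Completing the expansion (your $a=k$ term plus $k^{-1}u^{-2}\bigl[\Sigma(1)+\Sigma'(1)u\bigr]$) gives
\[
F(s,h/k)=\frac{1}{k\,u^{4}}+\frac{2\gamma L-L^{2}-2\gamma_{1}}{k\,u^{2}}+\frac{\tfrac{2}{3}L^{3}-2\gamma L^{2}+4\gamma_{1}L-4\gamma_{2}}{k\,u}+\cdots,
\]
whereas \eqref{Flaurent} asserts a $u^{-3}$ coefficient $2k^{-1}L$ and different $u^{-2}$, $u^{-1}$ coefficients. Your own proposed $k=2$ test settles this decisively: $F(s,1/2)=(\zeta'(s))^2-2h(s)^2$ with $h(s)=-\frac{d}{ds}\bigl[(1-2^{-s})\zeta(s)\bigr]$, and since $\zeta'$ and $h$ both have zero residue, $F(s,1/2)$ visibly has no $(s-1)^{-3}$ term, while \eqref{Flaurent} predicts the coefficient $\log 2$ there. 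The coefficients stated in the lemma are precisely what one obtains from the paper's comparison function $G$ if $k^{1-2s}$ is frozen at its value $k^{-1}$ instead of being expanded as $k^{-1}e^{-2(s-1)\log k}$; that is where the paper's own proof slips. So your approach, carried out faithfully, proves a corrected version of the lemma; as a proof of \eqref{Flaurent} as written it cannot succeed, and the final ``collapse into the claimed closed form'' you were hoping to verify will not occur.
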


\begin{proof}
We first express the function $F(s,h/k)$ in terms of the Hurwitz zeta-function,
\begin{align}
    \zeta(s,a)=\sum_{n=0}^{\infty}(n+a)^{-s} \hspace{0.8cm} (\sigma>1, 0<a\leq1).
\end{align}
One can observe for $\sigma>1$,
\begin{align}
F(s,h/k)&=\sum_{n=1}^{\infty}\bigg(\sum_{d|n}\log d\hspace{0.1cm}\log(n/d)\bigg)e^{\frac{2\pi inh}{k}}n^{-s}\\
    &=\sum_{\alpha,\beta=1}^{k}e_k(\alpha\beta h)\sum_{\substack{m\equiv\alpha(mod\hspace{1mm}k) \\n\equiv\beta(mod\hspace{1mm}k)}}\log m\hspace{1mm}\log n\hspace{1mm}(mn)^{-s}\\
    &=\sum_{\alpha,\beta=1}^{k}e_k(\alpha\beta h)\sum_{\mu,\nu=0}^{\infty}\left(\frac{-\log(\alpha+\mu k)}{(\alpha+\mu k)^s}\right)\left(\frac{-\log(\beta+\nu k)}{(\beta+\nu k)^s}\right)\\
&=k^{-2s}\sum_{\alpha,\beta=1}^{k}e_k(\alpha\beta h)\left[\zeta'(s,\alpha /k)-\log k\hspace{1mm}\zeta(s,\alpha/k)\right]\left[\zeta'(s,\beta/k)-\log k\hspace{1mm}\zeta(s,\beta/k)\right],
\end{align}
so that
\begin{align}
F(s,h/k)=\bigg(k^{-2s}&\sum_{\alpha,\beta=1}^{k}e_k(\alpha\beta h)\left[\zeta'(s,\alpha /k)\hspace{1mm}\zeta'(s,\beta/k)-2\log k\hspace{1mm}\zeta(s,\alpha/k)\hspace{1mm}\zeta'(s,\beta/k)\right]\bigg)\notag\\
    &+\log^2 k \hspace{1mm}E(s,h/k),\label{Fexp}
\end{align}
where $E(s,h/k)$ is as defined in \eqref{E(sh/k)defn}, which can also be expressed as
\begin{align}
E(s,h/k)=k^{-2s}\sum_{\alpha,\beta=1}^{k}e_k(\alpha\beta h)\zeta(s,\alpha/k)\zeta(s,\beta/k),\label{Eexp}
\end{align}
(refer [\cite{JM}, eq. (1.1.5)]). This holds first for $\sigma>1$. But $\zeta(s,a)$ can be continued analytically to a meromorphic function having a simple pole with residue 1 at $s=1$ \cite[p. 37]{TM}, and from this it can be easily deduced that $\zeta'(s,a)$ has an analytic continuation with a  pole of order $2$ having residue $0$ at $s=1$. We also notice that $E(s,h/k)$ has an analytic continuation to a meromorphic function with a double pole at $s=1$ as its only singularity \cite[Lemma 1.1]{JM}. Therefore, by equation \eqref{Fexp} we can conclude that $F(s,h/k)$ an analytic continuation, with its only possible pole of order at most $4$, at $s=1$.
\par Next, to study the behavior of $F(s,h/k)$ near $s=1$, we compare it with the function $G(s,h/k)$ which is defined as
\begin{align}
    G(s,h/k)&=\log^2 k\hspace{1mm}E(s,h/k)-2k^{-2s}\log k\hspace{1mm}\zeta'(s)\sum_{\alpha,\beta=1}^{k}e_k(\alpha\beta h)\zeta(s,\alpha/k)\notag\\
    &\hspace{4cm}+k^{-2s}\zeta'(s)\sum_{\alpha,\beta=1}^{k}e_k(\alpha\beta h)\zeta'(s,\alpha/k)\notag\\
    &=\log^2 k\hspace{1mm}E(s,h/k)-2k^{1-2s}\log k\hspace{1mm}\zeta(s)\zeta'(s)+k^{1-2s}(\zeta'(s))^2,\label{compfunc}
\end{align}
where the second step follows from the fact that
\begin{align}
    \sum_{\alpha,\beta=1}^{k}e_k(\alpha\beta h)\zeta(s,\alpha/k)=k\hspace{0.5mm}\zeta(s) \ \ \mbox{and}\ \ \sum_{\alpha,\beta=1}^{k}e_k(\alpha\beta h)\zeta'(s,\alpha/k)=k\hspace{0.5mm}\zeta'(s),
\end{align}
By equation \eqref{Fexp}, one can see
\begin{align}
 F(s,h/k)-G(s,h/k)=   -2k&^{-2s}\log k\sum_{\beta=1}^{k}\left[\sum_{\alpha=1}^{k}e_k(\alpha\beta h)\zeta(s,\alpha/k)\right](\zeta'(s,\beta/k)-\zeta'(s))\notag\\
    &+k^{-2s}\sum_{\beta=1}^{k}\left[\sum_{\alpha=1}^{k}e_k(\alpha\beta h)\zeta'(s,\alpha/k)\right](\zeta'(s,\beta/k)-\zeta'(s)).\label{diff}
\end{align}
Here, the factor $\zeta'(s,\beta/k)-\zeta'(s)$ is holomorphic at $s=1$ for all $\beta$, and vanishes at $\beta=k$. Moreover, the sum with respect to $\alpha$ is also holomorphic at $s=1$ for $\beta\neq k$, thus the difference of $F(s,h/k)$ and $G(s,h/k)$ is holomorphic at $s=1$. Hence $F(s,h/k)$ and $G(s,h/k)$ have the same principal part, which is equal to that given in \eqref{Flaurent}.
\end{proof}
\begin{lem}\label{funceq}
The function $F(s,h/k)$ satisfies the relation
\begin{align}
k^{2s-1}  F(s,h/k)
&= \mathcal{A}_0(s) F(1-s,-\overline{h}/k)+  \mathcal{B}_0(s) F(1-s,\overline{h}/k) \notag\\
&+ \left(-\mathcal{A}_0^{\prime}(s)+2\mathcal{A}_0(s) \log k \right) F_0(1-s,-\overline{h}/k)\notag\\
&+ \left(-\mathcal{B}_0^{\prime}(s)+2\mathcal{B}_0(s) \log k \right) F_0(1-s,\overline{h}/k)\notag\\
&+\left(\mathcal{A}_1(s)-\log k \mathcal{A}_0^{\prime}(s)
    +\log^2 k \mathcal{A}_0(s) \right)E(1-s,-\overline{h}/k) \notag\\
    & +\left(\mathcal{B}_1(s)-\log k\mathcal{B}_0^{\prime}(s)
    +\log^2 k \mathcal{B}_0(s) \right)E(1-s,\overline{h}/k),\label{funceqexp}
\end{align}
where $\mathcal{A}_i(s)=\frac{(\chi^{(i)}(s))^2}{2}-\frac{(\chi_1^{(i)}(s))^2}{2}$, $\mathcal{B}_i(s)=\frac{(\chi^{(i)}(s))^2}{2}+\frac{(\chi_1^{(i)}(s))^2}{2}$,
\begin{align}
    F_0(s,h/k)&=\sum_{n=1}^{\infty}d_{(0,1)}(n)e(nh/k)n^{-s}\hspace{2mm},\hspace{2mm}(\sigma>1),\\
    &=-\sum_{m,n=1}^{\infty}e_k(mnh)\log m (mn)^{-s}.
\end{align}
Also, we note that
\begin{align}\label{F(0)bound}
    F(0,h/k)\ll k\log^3 k.
\end{align}
\end{lem}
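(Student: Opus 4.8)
The plan is to establish the functional equation \eqref{funceqexp} by differentiating the functional equation for the Hurwitz-zeta-based Dirichlet series twice with respect to a parameter, and reading off the result from the structure of $F(s,h/k)$ as expressed in \eqref{Fexp}. Recall that the heart of the matter is Jutila's functional equation for $E(s,h/k)$ from \cite[Lemma 1.1]{JM}, which reads schematically
\begin{align}
k^{2s-1} E(s,h/k) = \chi^2(s)\,\tfrac{1}{2}\bigl(E(1-s,-\overline{h}/k)+E(1-s,\overline{h}/k)\bigr) + \chi_1^2(s)\,\tfrac{1}{2}\bigl(-E(1-s,-\overline{h}/k)+E(1-s,\overline{h}/k)\bigr).\notag
\end{align}
Rewriting this with the notation $\mathcal{A}_0(s)=\tfrac12(\chi^2-\chi_1^2)$ and $\mathcal{B}_0(s)=\tfrac12(\chi^2+\chi_1^2)$ gives
\begin{align}
k^{2s-1} E(s,h/k) = \mathcal{A}_0(s) E(1-s,-\overline{h}/k) + \mathcal{B}_0(s) E(1-s,\overline{h}/k).\label{Efunceq}
\end{align}
The key observation is that $F(s,h/k)$, $F_0(s,h/k)$, and $E(s,h/k)$ all arise from a single parametrized family: if one introduces an auxiliary variable and considers $\sum_n \sigma(n,w) e(nh/k) n^{-s}$ where the coefficients are built from $d^{w}$-type weights, then $E$, $F_0$, and $F$ correspond to the zeroth, first, and second derivatives in that parameter. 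Concretely, from \eqref{Fexp} one sees that $F$ is obtained from $E$ by the operator that sends $\zeta(s,\alpha/k)$ to $\zeta'(s,\alpha/k)-\log k\,\zeta(s,\alpha/k)$ in each of the two factors.

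The main steps I would carry out are as follows. First I would record that $F_0(s,h/k)$ satisfies its own functional equation, obtained by differentiating \eqref{Efunceq} once with respect to $s$ (accounting for the $\log k$ twists coming from the $k^{-2s}$ prefactor and the reflection $s\mapsto 1-s$), producing terms in $\mathcal{A}_0'(s)$, $\mathcal{B}_0'(s)$, and $\mathcal{A}_1(s),\mathcal{B}_1(s)$ where $\mathcal{A}_1,\mathcal{B}_1$ involve $(\chi')^2,(\chi_1')^2$. Second, I would differentiate once more to reach $F$. The cleanest bookkeeping device is to treat the single-variable generating identity: if $\Phi(s;t) := \sum_n \bigl(\sum_{d|n} d^{-t}(n/d)^{-t'}\cdots\bigr)\cdots$ realizes $E,F_0,F$ as coefficients in a Taylor expansion, then applying $\partial/\partial(\log\text{(shift)})$ to \eqref{Efunceq} twice and collecting terms by the Leibniz rule yields precisely the combination of $\mathcal{A}_0,\mathcal{A}_0',\mathcal{A}_1$ (and the $\mathcal{B}$-analogues) multiplying $F,F_0,E$ on the right-hand side of \eqref{funceqexp}. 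I would verify that the coefficients match: the factor $-\mathcal{A}_0'(s)+2\mathcal{A}_0(s)\log k$ attached to $F_0$ comes from differentiating the product $k^{2s-1}\mathcal{A}_0(s)$ once and picking up the $2\log k$ from $k^{2s-1}$, while the factor $\mathcal{A}_1(s)-\log k\,\mathcal{A}_0'(s)+\log^2 k\,\mathcal{A}_0(s)$ attached to $E$ collects the two-fold differentiation of the same prefactor.

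For the final bound \eqref{F(0)bound}, I would specialize the functional equation (or directly the Laurent/continuation data) to $s=0$. At $s=0$ the reflected arguments $1-s$ equal $1$, so the right-hand side involves $F(1,\pm\overline{h}/k)$, $F_0(1,\pm\overline{h}/k)$, and $E(1,\pm\overline{h}/k)$, which must be interpreted via their principal parts and regularized values; more straightforwardly, I would estimate $F(0,h/k)$ from \eqref{Fexp} using the known values and growth of $\zeta(0,a)$, $\zeta'(0,a)$ in terms of $a=\alpha/k$, together with the trivial bound on the character-type sum $\sum_{\alpha,\beta=1}^k e_k(\alpha\beta h)$ over the $k^2$ terms. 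The logarithmic factors $\log^3 k$ arise naturally from the $\log k$ coefficients appearing in \eqref{Fexp} combined with the logarithmic contribution of $\zeta'(0,\alpha/k)\sim$-type terms, and the single power of $k$ reflects the summation savings in the Gauss-type sum.

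The hard part will be the exact coefficient matching in the double differentiation: one must carefully track how the $\log k$ factors from the explicit $k^{-2s}$ normalization interleave with the $s$-derivatives of $\chi$ and $\chi_1$, since \eqref{funceqexp} mixes $\mathcal{A}_0',\mathcal{A}_1$ and $\log k,\log^2 k$ in a nonobvious pattern, and the twist $h\mapsto\pm\overline{h}$ must be kept consistent between the $E$, $F_0$, and $F$ terms. A secondary subtlety is justifying term-by-term differentiation of the functional equation, which is legitimate on the region of absolute convergence and extends by analytic continuation since all functions involved are meromorphic with poles only at $s=1$.
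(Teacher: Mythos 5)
Your plan has a genuine gap at its central step. Differentiation in $s$ cannot reach $F(s,h/k)$: since $d(n)\log^2 n=2\sum_{d\mid n}\log^2 d+2D_{(1)}(n)$, the second $s$-derivative of $E(s,h/k)$ is the series whose coefficients mix $D_{(1)}(n)$ with $\sum_{d\mid n}\log^2 d$ (the twisted analogue of $\zeta(s)\zeta''(s)$), and no linear combination of $E$, $E'$, $E''$ isolates $F$. (Your first step is fine only because $F_0=\tfrac12 E'$; note, however, that a single $s$-differentiation produces no $\mathcal{A}_1,\mathcal{B}_1$ terms --- those can only appear at second order.) You implicitly recognize the obstruction by switching to the two-parameter family $\sum_{m,n}e_k(mnh)\,m^{-s-t}n^{-s-t'}$, whose mixed derivative $\partial_t\partial_{t'}$ at $t=t'=0$ does produce $F$. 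But then the identity you must differentiate is not Jutila's functional equation for $E$: it is a \emph{shifted} functional equation for the two-parameter series, a strictly stronger statement that cannot be obtained from the unshifted one by differentiation in any variable. Establishing it requires exactly the computation the paper performs --- apply Hurwitz's formula \eqref{zetaexp} to each factor $\zeta(s+t,\alpha/k)$, $\zeta(s+t',\beta/k)$, expand the product, and use the orthogonality of $e_k(\alpha\beta h\mp m\alpha)$ --- so your proposal defers rather than avoids the real work. For the record, once that shifted equation is in hand your bookkeeping is correct: $\partial_t\partial_{t'}$ at the origin, after the $\log k$ and $\log^2 k$ terms coming from the prefactor $k^{2s+t+t'-1}$ are eliminated using the $E$- and $F_0$-equations, reproduces \eqref{funceqexp} with exactly the stated coefficients; the paper's proof via \eqref{Fexp}, \eqref{zeta'exp} and \eqref{firstexpfunceq}--\eqref{thirdexpfunceq} is this same computation organized at the level of Hurwitz zeta products rather than of the assembled Dirichlet series.

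For \eqref{F(0)bound}, your fallback of evaluating \eqref{Fexp} at $s=0$ with $\zeta(0,a)=1/2-a$ and $\zeta'(0,a)=\log\left(\Gamma(a)/\sqrt{2\pi}\right)$ is the paper's route, but the ``trivial bound on the character-type sum over the $k^2$ terms'' does not suffice: bounding each of the $k^2$ terms by $O(\log^2 k)$ gives only $k^2\log^2 k$. To save the factor of $k$ one needs the incomplete geometric-sum bound $\left|\sum_\beta e_k(\alpha\beta h)\right|\ll \|\alpha h/k\|^{-1}$ (the paper's \eqref{e_kpartialsumbound}) combined with partial summation against the monotone weights $\log\Gamma(\alpha/k)$ and $\alpha/k$; that is where $k\log^3 k$ comes from. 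Also, your alternative of specializing the functional equation to $s=0$ is not viable without substantial extra care, since the reflected functions $E$, $F_0$, $F$ all have poles at argument $1$.
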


\begin{proof}
Employing the formula given in [\cite{TM}, equation (2.17.3)] and noting \eqref{chichi1}, we have
\begin{align}
    \zeta(s,a)
    &=\chi(s)\sum_{m=1}^{\infty}\frac{\cos(2\pi ma)}{m^{1-s}}\hspace{1mm}+\hspace{1mm}\chi_1(s)\sum_{m=1}^{\infty}\frac{\sin(2\pi ma)}{m^{1-s}},\label{zetaexp}
\end{align}
for $\Re(s)=\sigma<0$, which in turn gives
\begin{align}
 \zeta'(s,a)=&\chi'(s)\sum_{m=1}^{\infty}\frac{\cos(2\pi ma)}{m^{1-s}}+\chi(s)\sum_{m=1}^{\infty}\frac{\cos(2\pi ma)}{m^{1-s}}\log m\hspace{1mm}+\chi_1'(s)\sum_{m=1}^{\infty}\frac{\sin(2\pi ma)}{m^{1-s}}\notag\\
    &+\chi_1(s)\sum_{m=1}^{\infty}\frac{\sin(2\pi ma)}{m^{1-s}}\log m.\label{zeta'exp}
\end{align}
Simplifying the expression for $\zeta'(s,\alpha/k)\zeta'(s,\beta/k)$ and using
\begin{align}
    \sum_{\alpha=1}^{k}e_k(\alpha\beta h\mp m\alpha)=
    \begin{cases}
        k & \text{if $\beta\equiv\pm\hspace{1mm} m\overline{h}$\hspace{1mm}(mod $k$)}\\
        0 & \text{otherwise}
    \end{cases}
\end{align}
we get,
\begin{align}
    &\quad k^{-1}\sum_{\alpha,\beta=1}^{k}e_k(\alpha\beta h)\zeta'(s,\alpha/k)\zeta'(s,\beta/k)\notag\\
    &=\mathcal{A}_1(s)\sum_{m,n=1}^{\infty}e_k(-mn\overline{h})(mn)^{s-1}+\mathcal{B}_1(s)\sum_{m,n=1}^{\infty}e_k(mn\overline{h})(mn)^{s-1}\notag\\
    &+\mathcal{A}_0^{\prime}(s)\sum_{m,n=1}^{\infty}e_k(-mn\overline{h})\log m\hspace{1mm}(mn)^{s-1}+\mathcal{B}_0^{\prime}(s)\sum_{m,n=1}^{\infty}e_k(mn\overline{h})\log m\hspace{1mm}(mn)^{s-1}\notag\\
    &+\mathcal{A}_0(s)\sum_{m,n=1}^{\infty}e_k(-mn\overline{h})\log m\hspace{1mm}\log n\hspace{1mm}(mn)^{s-1}+\mathcal{B}_0(s)\sum_{m,n=1}^{\infty}e_k(mn\overline{h})\log m\hspace{1mm}\log n\hspace{1mm}(mn)^{s-1}\notag\\
    &\hspace{13cm}(\sigma<0).\label{firstexpfunceq}
\end{align}
Similarly, it can be shown that
\begin{align}
    &\quad k^{-1}\sum_{\alpha\beta=1}^{k}e_k(\alpha\beta h)\zeta(s,\alpha/k)\zeta'(s,\beta/k)\notag\\
    &=\frac{\mathcal{A}_0^{\prime}(s)}{2}\sum_{m,n=1}^{\infty}e_k(-mn\overline{h})(mn)^{s-1}+\frac{\mathcal{B}_0^{\prime}(s)}{2}\sum_{m,n=1}^{\infty}e_k(mn\overline{h})(mn)^{s-1}\notag\\
    &+\mathcal{A}_0(s)\sum_{m,n=1}^{\infty}e_k(-mn\overline{h})\log m\hspace{1mm}(mn)^{s-1}+\mathcal{B}_0(s)\sum_{m,n=1}^{\infty}e_k(mn\overline{h})\log m\hspace{1mm}(mn)^{s-1}\hspace{7mm}(\sigma<0),\label{secondexpfunceq}
\end{align}
and
\begin{align}
    &\quad k^{-1}\sum_{\alpha\beta=1}^{k}e_k(\alpha\beta h)\zeta(s,\alpha/k)\zeta(s,\beta/k)\notag\\
    &=\mathcal{A}_0(s)\sum_{m,n=1}^{\infty}e_k(-mn\overline{h})(mn)^{s-1}+\mathcal{B}_0(s)\sum_{m,n=1}^{\infty}e_k(mn\overline{h})(mn)^{s-1}\hspace{14mm}(\sigma<0).\label{thirdexpfunceq}
\end{align}
Using \eqref{Eexp} in equation \eqref{Fexp} and substituting from \eqref{firstexpfunceq}, \eqref{secondexpfunceq} and \eqref{thirdexpfunceq}, the formula \eqref{funceqexp} follows for $\sigma<0$, but can be continued analytically to the whole complex plane.\\
For a proof of \eqref{F(0)bound}, we derive an expression for $F(0,h/k)$ in a closed form. By \eqref{Fexp},
\begin{align}
    F(0,h/k)=\sum_{\alpha,\beta=1}^{k}e_k(\alpha\beta h)\left[\zeta'(0,\alpha /k)\zeta'(0,\beta/k)\right.&-2\log k\hspace{1mm}\zeta(0,\alpha/k)\zeta'(0,\beta/k)\notag\\
    &\left.+\log^{2}k\hspace{1mm}\zeta(0,\alpha/k)\zeta(0,\beta/k)\right].\label{F(0)exp}
\end{align}
If $0<a<1$, then the series in \eqref{zetaexp} converges uniformly and thus defines a continuous function for all real $\sigma\leq 0$. By continuity, \eqref{zetaexp} remains valid for $s=0$ also. Thus, we obtain
\begin{align}\label{zeta(0,a)}
    \zeta(0,a)=\pi^{-1}\sum_{m=1}^{\infty}\sin{(2\pi ma)}m^{-1}.
\end{align}
Since the series on the right equals $\pi(1/2-a)$ for $0<a<1$, thus
\begin{align}\label{zeta(0,a)2}
    \zeta(0,a)=1/2-a.
\end{align}
Also, by [\cite{WR}, Equation (16)]
\begin{align}\label{zeta'(0,a)}
    \zeta'(0,a)=\log\frac{\Gamma(a)}{\sqrt{2\pi}}.  
\end{align}
Since $\zeta(0,1)=\zeta(0)=-1/2$ and $\zeta'(0,1)=\zeta'(0)=-\frac{1}{2}\log{2\pi}$, \eqref{zeta(0,a)2} and \eqref{zeta'(0,a)} hold for $a=1$ as well.
Now, by \eqref{F(0)exp}, \eqref{zeta(0,a)2} and \eqref{zeta'(0,a)},
\begin{align}
    F(0,h/k)=\sum_{\alpha,\beta=1}^{k}e_k(\alpha\beta h)\left[\log\frac{\Gamma(\alpha/k)}{\sqrt{2\pi}}\log\frac{\Gamma(\beta/k)}{\sqrt{2\pi}}\right.&-2\log k\hspace{1mm}(1/2-\alpha/k)\log\frac{\Gamma(\beta/k)}{\sqrt{2\pi}}\\
    &+\log^2 k\hspace{1mm}(1/2-\alpha/k)(1/2-\beta/k)\bigg].
\end{align}
From this it follows that
\begin{align}
    F(0,h/k)=\frac{k}{2}\log{2\pi}\log&\frac{\sqrt{2\pi}}{k}-\frac{3}{4}k\log^2 k+\sum_{\alpha,\beta=1}^{k}e_k(\alpha\beta h)\bigg[\log\Gamma(\alpha/k)\log\Gamma(\beta/k)\notag\\
    &+\frac{2\log k}{k}\alpha\log\Gamma(\beta/k)-\log{(2\pi k)}\log\Gamma(\beta/k)+\frac{\log^2 k}{k^2}\alpha\beta\bigg].\label{F(0)exp2}
\end{align}
To estimate the double sums on the right, note that if $1\leq \alpha\leq k-1$ and $\beta$ runs over an arbitrary interval, then using \cite[Theorem 2.1, p. 7]{VC}
\begin{align}
    \left|\sum_{\beta}e_k(\alpha \beta h)\right|\ll ||\alpha h/k||^{-1}.\label{e_kpartialsumbound}
\end{align}
Thus, by repeated use of partial summation,
\begin{align}
    \left|\sum_{\alpha=1}^{k-1}\sum_{\beta=1}^{k}e_k(\alpha\beta h)\log\Gamma(\alpha/k)\log\Gamma(\beta/k)\right|&\ll \log k\sum_{\alpha=1}^{k-1}\log\Gamma(\alpha/k)\hspace{1mm}||\alpha h/k||^{-1}
    \ll k\log^3 k.\label{F(0)firstsum}
\end{align}
Similarly, we can have
\begin{align}
\left|\sum_{\alpha=1}^{k}\sum_{\beta=1}^{k}e_k(\alpha\beta h)\alpha\log\Gamma(\beta/k)\right|&\leq\left|\sum_{\alpha=1}^{k-1}\alpha\sum_{\beta=1}^{k}e_k(\alpha\beta h)\log\Gamma(\beta/k)\right|+k\left|\sum_{\beta=1}^{k}\log\Gamma(\beta/k)\right|\notag\\
    &\ll \log k\sum_{\alpha=1}^{k-1}\alpha||\alpha h/k||^{-1}+k(k+\log k)\notag\\
    &\ll k^2 \log k.\label{F(0)secondsum}
\end{align}
Also, the bounds
\begin{align}\label{F(0)thirdsum}
\left|\sum_{\alpha=1}^{k}\sum_{\beta=1}^{k}e_k(\alpha\beta h)\log\Gamma(\beta/k)\right|\ll k\log k,
\end{align}
and
\begin{align}\label{F(0)fourthsum}
\left|\sum_{\alpha=1}^{k}\sum_{\beta=1}^{k}e_k(\alpha\beta h)\alpha\beta\right|\ll k^3
\end{align}
can be obtained similarly. Thus, \eqref{F(0)bound}  follows from \eqref{F(0)exp2}-\eqref{F(0)fourthsum}.
\end{proof}

Using the expressions for $\chi(s)$ and $\chi_1(s)$ in Lemma \eqref{funceq}, the functional equation for $F(s,h/k)$ can be alternatively expressed as 
\begin{align}
    &F(s,h/k)=2(2\pi)^{2s-2}k^{1-2s} \Gamma^2(1-s) \bigg\{\bigg[F(1-s,\overline{h}/k)-\cos(\pi s)\hspace{1mm}F(1-s,-\overline{h}/k)\bigg]\notag\\
    &+\bigg[\bigg(F_0(1-s,\overline{h}/k)-\cos(\pi s)\hspace{1mm}F_0(1-s,-\overline{h}/k)\bigg)\bigg(2\psi(1-s)-\log\left(\frac{4\pi^2}{k^2}\right)\bigg)-\pi\sin(\pi s)\times\notag\\
    &\hspace{11.5cm}\times F_0(1-s,-\overline{h}/k)\bigg]\notag\\
    &+\bigg[\bigg(E(1-s,\overline{h}/k)-\cos(\pi s)\hspace{1mm}E(1-s,-\overline{h}/k)\bigg)\bigg(\log^2(2\pi)+\psi^2(1-s)-\log \left(\frac{4\pi^2}{k^2}\right)\psi(1-s)\notag\\
    &\hspace{11.7cm}-\log\left(\frac{4\pi^2}{k}\right)\log k\bigg)\notag\\
    &+\frac{\pi^2}{4}\bigg(E(1-s,\overline{h}/k)+\cos(\pi s)\hspace{1mm}E(1-s,-\overline{h}/k)\bigg)+\pi \sin(\pi s)\hspace{1mm}E(1-s,-\overline{h}/k)\times\notag\\
    &\hspace{9.5cm}\times\bigg(\log\left(\frac{2\pi}{k}\right)-\psi(1-s)\bigg)\bigg]\bigg\}.\label{altfunceq}
\end{align}
where $\psi(s)$ is the digamma function.\\
While the representation given in \eqref{funceqexp} will be helpful to calculate the exact terms in the formulas of $\Delta_a(x,h/k)$ that will follow, it will be more convenient to use \eqref{altfunceq} in estimating the bound for $F(s,h/k)$.


\section{Proof of Theorem \ref{th1}}\label{proofofth1}
   This section is devoted to the proof of Theorem \ref{th1}.\\
    Let $\delta>0$ be an arbitrary real number. By using the version of Perron's formula [\cite{TM}, p. 60, Lemma 3.12] over a finite contour of integration, we have
    \begin{align}\label{perronform}
        B(x,h/k)=\frac{1}{2\pi i}\int_{1+\delta-iT}^{1+\delta+iT}F(s,h/k)\frac{x^s}{s}ds\hspace{1mm}+\hspace{1mm}O\bigg(\frac{x^{1+\delta}}{T}\bigg),
    \end{align}
    where $T$ is a parameter such that 
    \begin{align}\label{Tcondition}
        3\leq T\ll k^{-1}x.
    \end{align}
     Next, we will move the integration in \eqref{perronform} to the line $\sigma=-\delta$ in order to evaluate the integral around the rectangle with vertices $1+\delta+iT$, $-\delta+iT$, $-\delta-iT$ and $1+\delta-iT$. Since $F(s,h/k)$ has a pole of order $4$ at $s=1$, to find an estimate for $F(s,h/k)$ in the strip $-\delta\leq \Re(s)=\sigma\leq 1+\delta$ for $|t|\geq 3$, we define an auxiliary function
    \begin{align}\label{auxfunc}
    \left(\frac{s-1}{s-2}\right)^4F(s,h/k)
    \end{align}
    which is holomorphic in the strip $-\delta\leq\sigma\leq 1+\delta$. 
    Note that the function in \eqref{auxfunc} is of the same order of magnitude as $F(s,h/k)$. By definition, $D_{1}(n)\ll d(n)\log^2(n)$. Hence,
    \begin{align}
        \left|\left(\frac{s-1}{s-2}\right)^4F(s,h/k)\right|\leq \left(1+\frac{1}{|s-2|}\right)^4\sum_{n=1}^{\infty}\frac{|D_{(1)}(n)|}{n^{\sigma}},
    \end{align}
    that is, the auxiliary function is bounded on the line $\sigma=1+\delta$.\\
    On the line $\sigma=-\delta$, by using the estimate of the gamma function in \eqref{gammasmallestimate}, we get the following estimate for $A(s)=2(2\pi)^{2s-2}k^{1-2s} \Gamma(1-s)$ in \eqref{altfunceq},
    \begin{align}
        |A(s)|&\ll 2(2\pi)^{2\sigma-2}k^{1-2\sigma}|t|^{1-2\sigma}e^{-\pi|t|}
        \ll (k|t|)^{1+2\delta}e^{-\pi|t|}.\label{A(s)bound}
    \end{align}
    Clearly, $F(1-s,\pm \overline{h}/k)$, $F_0(1-s,\pm \overline{h}/k)$ and $E(1-s, \pm \overline{h}/k)$ are all bounded on the line $\sigma=-\delta$. Moreover, we have
    \begin{align}\label{sinestimate}
        \sin(\pi s)=\frac{e^{\pm i\pi/2}}{2}e^{\pi |t|}e^{\mp i\pi\sigma}(1+O(1/|t|))\hspace{2mm},\hspace{1mm}|t|\geq 1,
    \end{align}
    and also 
    \begin{align}\label{cosestimate}
        \cos(\pi s)=\frac{1}{2}e^{\pi |t|}e^{\mp i\pi\sigma}(1+O(1/|t|))\hspace{2mm},\hspace{1mm}|t|\geq 1.
    \end{align}
    Also,  using  \cite[equation (6)), p.73]{DP}, we estimate the digamma function as
    \begin{align}
        \psi(1-s)
        &=\log\left(1+\frac{i(1+\delta)}{t}\right)+\log|t|+i3\pi/2+O(1/|t|)
        \ll \log|t|,\label{digammaestimate}
    \end{align}
    for $|t|\geq 1$. 
Thus, combining \eqref{A(s)bound}, \eqref{sinestimate}, \eqref{cosestimate} together with \eqref{digammaestimate} in \eqref{altfunceq}, we obtain
\begin{align}
    \left(\frac{s-1}{s-2}\right)^4F(s,h/k)&\ll (k|t|)^{1+2\delta}(k|t|)^{\delta}
    \ll (k(|t|+1))^{1+3\delta}\hspace{5mm} at \hspace{2mm}\sigma=-\delta.
\end{align}
We now use generalization of the Phragme\'n Lindelo\"f theorem [\cite{HR}, Section 33, p. 66] for calculating an estimate of the function in \eqref{auxfunc}, and consequently an estimate for $F(s,h/k)$ in the region $-\delta\leq\sigma\leq 1+\delta$, $|t|\geq 3$. Hence, we get
\begin{align}
    \left|\left(\frac{s-1}{s-2}\right)^{4}F(s,h/k)\right|\ll (k|t|)^{\frac{(1+3\delta)(1+\delta-\sigma)}{1+2\delta}},
\end{align}
which in turn will give
\begin{align}\label{Fbound}
     |F(s,h/k)|\ll (k|t|)^{1-\sigma+2\delta} \hspace{3mm} for -\delta\leq\sigma\leq 1+\delta \hspace{1mm}, \hspace{1mm}|t|\geq 3.
\end{align}
Let $C$ be the rectangular contour with vertices $1+\delta\pm iT$ and $-\delta\pm iT$. Then by the Cauchy's residue theorem,
\begin{align}
 I:=   \frac{1}{2\pi i}\int_C F(s,h/k)\hspace{1mm}x^s s^{-1}ds&=\underset{s=0}{Res}\hspace{1mm}(F(s,h/k)\hspace{1mm}x^s s^{-1})+\underset{s=1}{Res}\hspace{1mm}(F(s,h/k)\hspace{1mm}x^s s^{-1}).
\end{align}
To calculate the residue at $s=1$, we use the expansion in \eqref{Flaurent},
\begin{align}
   (s-1)^4 F(s,h/k)=&\frac{1}{k}+\frac{\log k}{k}(s-1)+\frac{1}{k}(\log ^2 k+2\gamma\log k-2\gamma_1)(s-1)^2\\
    &+\frac{2}{k}(-\log^3 k+\gamma\log^2 k-2\gamma_2)(s-1)^3+O(|s-1|^4)+\cdots.
\end{align} Thus the integral becomes
\begin{align}
   I
    =&\hspace{1mm}\frac{x}{k} \left[\frac{1}{6}\log^3 x+\left(\log k-\frac{1}{2}\right)\log^2 x
    +(\log^2 k+2(\gamma-1)\log k-2\gamma_1+1)\log x \right.\\
    &+(-2\log^3 k+(2\gamma-1)\log^2 k-2(\gamma-1)\log k+2\gamma_1-4\gamma_2-1) \bigg]+F(0,h/k).\label{Fintformula}
\end{align}
The integrals over the horizontal parts of $C$ are obtained as
\begin{align}
    \int_{-\delta\pm iT}^{1+\delta\pm iT}F(s,h/k)x^s s^{-1}ds
    &\ll \int_{-\delta}^{1+\delta}(kT)^{1-\sigma+2\delta}x^{\sigma}\hspace{1mm}T^{-1}d\sigma \ll x^{1+2\delta}\hspace{1mm}T^{-1} \label{bound11}
\end{align}
by using \eqref{Fbound} and $T\ll k^{-1}x$. 
Employing \eqref{bound11} and \eqref{Fintformula} in \eqref{perronform}, we obtain
\begin{align}
    B(x,h/k)=&k^{-1}x \left[ \frac{1}{6}\right. \log^3 x+\left(\log k-\frac{1}{2}\right)\log^2 x+(\log^2 k+2(\gamma-1)\log k-2\gamma_1+1)\log x\notag\\
    &+(-2\log^3 k+(2\gamma-1)\log^2 k-2(\gamma-1)\log k+2\gamma_1-4\gamma_2-1) \bigg]\notag\\
    &+F(0,h/k)+\Delta(x,h/k), \label{B_0formula}
\end{align}
where
\begin{align}
    \Delta(x,h/k)=\frac{1}{2\pi i}\int_{-\delta-iT}^{-\delta+iT}F(s,h/k)x^s\hspace{1mm}s^{-1}ds+O(x^{1+2\delta}\hspace{1mm}T^{-1}).
\end{align}
At $\sigma=-\delta$, $F(-\delta+it,h/k)$ is analytic for $|t|\leq 3$. Employing the fact that $F(s,h/k)$ is bounded for $|t|\leq 3$, we have
\begin{align}
    \int_{-\delta-3i}^{-\delta+3i}F(s,h/k)x^s\hspace{1mm}s^{-1}ds
    \ll x^{1+2\delta}\hspace{1mm}T^{-1},
\end{align}
as $k\ll x/T$.\\
To avoid complicated integral expressions, we shall set a notation. Throughout this paper, we shall write
\begin{align} \label{branchedintegral}
 \int_{b-iT}^{b+iT}\llap{=\quad\hspace{11.5pt}}=\int_{b-iT}^{b-3i}+\int_{b+3i}^{b+iT} \ \mbox{and} \ \ \int_{-T}^{T}\llap{=\quad\hspace{0.1pt}}=\int_{-T}^{-3}+\int_{3}^{T}.
 \end{align}
Thus,  we have
\begin{align}\label{deltaform}
\Delta(x,h/k)=\frac{1}{2\pi i}\int_{-\delta-iT}^{-\delta+iT}\llap{=\quad\quad\hspace{8pt}}F(s,h/k)x^s\hspace{1mm}s^{-1}ds+O(x^{1+2\delta}\hspace{1mm}T^{-1}).\end{align}
We now apply the relation \eqref{funceqexp} to \eqref{deltaform}. Observe that the coefficients of the terms involving $F(1-s,\overline{h}/k)$, $F_0(1-s,\overline{h}/k)$ and $E(1-s,\overline{h}/k)$ contain $e^{-\pi|t|}$, and hence decrease rapidly as $|t|$ increases. Thus, these terms will be estimated as part of the error term.
By \eqref{altfunceq} and \eqref{A(s)bound}, the contribution of these error terms to the integral in equation \eqref{deltaform} is 

$$\int_{-\delta-iT}^{-\delta+iT}\llap{=\quad\quad\hspace{8pt}}(k(|t|+1))^{1+3\delta}e^{-\pi |t|}x^s s^{-1}ds \ll k^{1+3\delta}x^{-\delta}\int_{-T}^{T}\llap{=\quad\hspace{0.1pt}}(1+|t|)^{3\delta}e^{-\pi |t|}dt
\ll x^{1+2\delta}T^{-1}.$$
Thus, we have
\begin{align}
&\Delta(x,h/k)=k\sum_{n=1}^{\infty}\frac{d(n)\hspace{1mm}e_k(-n\overline{h})}{n}\hspace{1mm}\frac{1}{2\pi i}\int_{-\delta-iT}^{-\delta+iT}\llap{=\quad\quad\hspace{8pt}}\bigg[\frac{(\chi'(s))^2}{2}-\frac{(\chi_1'(s))^2}{2}-\log k\hspace{1mm}\chi(s)\chi'(s)\notag\\
    &\hspace{3.5cm}+\log k\hspace{1mm}\chi_1(s)\chi_1'(s)+\log^2 k\frac{(\chi(s))^2}{2}-\log^2 k\frac{(\chi_1(s))^2}{2}\bigg]\left(\frac{nx}{k^2}\right)^s s^{-1}ds\notag\\
&\hspace{1.7cm}+k\sum_{n=1}^{\infty}\frac{d_{(0,1)}(n)\hspace{1mm}e_k(-n\overline{h})}{n}\hspace{1mm}\frac{1}{2\pi i}\int_{-\delta-iT}^{-\delta+iT}\llap{=\quad\quad\hspace{8pt}}\left[-\chi(s)\chi'(s)+\chi_1(s)\chi_1'(s)\right.\notag\\
    &\hspace{7cm}\left.+\log k(\chi(s))^2-\log k(\chi_1(s))^2\right]\left(\frac{nx}{k^2}\right)^s s^{-1}ds\notag\\
    &+k\sum_{n=1}^{\infty}\frac{D_{(1)}(n)\hspace{1mm}e_k(-n\overline{h})}{n}\hspace{1mm}\frac{1}{2\pi i}\int_{-\delta-iT}^{-\delta+iT}\llap{=\quad\quad\hspace{8pt}}\left[\frac{(\chi(s))^2}{2}+\frac{(\chi_1(s))^2}{2}\right]\left(\frac{nx}{k^2}\right)^s s^{-1}ds+O(x^{1+2\delta}\hspace{1mm}T^{-1}).
\end{align}
Since the integrals and sums are absolutely convergent, hence we interchange the order of integration and summation to obtain
\begin{align}
&\Delta(x,h/k)=\hspace{1mm}k\sum_{n=1}^{\infty}\frac{d(n)}{2n}e_k(-n\overline{h})\hspace{1mm}\frac{1}{2\pi i}\int_{-\delta-iT}^{-\delta+iT}\llap{=\quad\quad\hspace{8pt}}(\chi'(s))^2\left(\frac{nx}{k^2}\right)^s s^{-1}ds\notag\\
    &\hspace{1.3cm}-k\sum_{n=1}^{\infty}\frac{d(n)}{2n}e_k(-n\overline{h})\hspace{1mm}\frac{1}{2\pi i}\int_{-\delta-iT}^{-\delta+iT}\llap{=\quad\quad\hspace{8pt}}(\chi_1'(s))^2\left(\frac{nx}{k^2}\right)^s s^{-1}ds\notag\\
    &\hspace{1.3cm}-k\sum_{n=1}^{\infty}\frac{(d(n)\log k+d_{(0,1)}(n))}{n}e_k(-n\overline{h})\hspace{1mm}\frac{1}{2\pi i}\int_{-\delta-iT}^{-\delta+iT}\llap{=\quad\quad\hspace{8pt}}\chi(s)\chi'(s)\left(\frac{nx}{k^2}\right)^s s^{-1}ds\notag\\
&\hspace{1.3cm}+k\sum_{n=1}^{\infty}\frac{(d(n)\log k+d_{(0,1)}(n))}{n}e_k(-n\overline{h})\hspace{1mm}\frac{1}{2\pi i}\int_{-\delta-iT}^{-\delta+iT}\llap{=\quad\quad\hspace{8pt}}\chi_1(s)\chi_1'(s)\left(\frac{nx}{k^2}\right)^s s^{-1}ds\notag\\
&+k\sum_{n=1}^{\infty}\frac{(d(n)\log^2 k+2d_{(0,1)}(n)\log k+D_{(1)}(n))}{2n}e_k(-n\overline{h})\frac{1}{2\pi i}\int_{-\delta-iT}^{-\delta+iT}\llap{=\quad\quad\hspace{8pt}}(\chi(s))^2\left(\frac{nx}{k^2}\right)^s s^{-1}ds\notag\\
    &-k\sum_{n=1}^{\infty}\frac{(d(n)\log^2 k+2d_{(0,1)}(n)\log k+D_{(1)}(n))}{2n}e_k(-n\overline{h})\frac{1}{2\pi i}\int_{-\delta-iT}^{-\delta+iT}\llap{=\quad\quad\hspace{8pt}}(\chi_1(s))^2\left(\frac{nx}{k^2}\right)^s s^{-1}ds\notag\\    &\hspace{11cm}+O(x^{1+2\delta}\hspace{1mm}T^{-1}).\label{deltafinalform}
\end{align}
At this stage, we fix the parameter $T$ by putting 
\begin{align}\label{Tparameter}
T^2 k^2 (4\pi^2 x)^{-1}=N+\frac{1}{2}\hspace{1mm},
\end{align}
where $N$ is an integer such that $1\leq N\ll x$. Hence, it readily follows that $T\ll k^{-1}x$. But for condition \eqref{Tcondition} to hold, we also need $T\geq 3$, which presupposes that $N\gg k^2 \hspace{1mm}x^{-1}$.\\
Note that if $1\leq N \ll k^2x^{-1}$, then \eqref{errorformula} is implied by the estimate $\Delta(x,h/k)\ll x^{1+\varepsilon}$, which follows from \eqref{B_0formula} and \eqref{F(0)bound}. Hence, we may assume that $N\gg k^2x^{-1}$, else the assertion \eqref{errorformula} holds trivially. \\
Next, we consider the tail $n>N$ of all the series in equation \eqref{deltafinalform}, and show that it will be a part of the error term. The integrals split into two parts, namely $t$ running over the intervals $[-T,-3]$ and $[3,T]$ respectively. Since integrals over both the intervals are similar, consider the second integral, say $I_1$, where
\begin{align}\label{I12}
    I_1=\frac{1}{2\pi i}\int_{-\delta+i3}^{-\delta+iT}(\chi'(s))^2\left(\frac{nx}{k^2}\right)^{s}s^{-1}ds.
\end{align}
By \eqref{gammabigestimate}, 
\begin{align}
    \chi(s)=2(2\pi)^{s-1/2}\hspace{1mm}t^{1/2-s}\sin\left(\frac{\pi s}{2}\right)exp\left[-\frac{\pi |t|}{2}+it-\frac{i\pi}{2}\left(\frac{1}{2}-\sigma\right)\right](1+O(1/t)).\label{chiformula}
\end{align}
Since 
\begin{align}
\chi'(s)=\chi(s)(\log(2\pi)-\psi(1-s))+\frac{\pi}{2}\hspace{1mm}\chi_1(s),\label{chi'formula}
\end{align}
hence
\begin{align}
    (\chi'(s))^2=4(2\pi)^{2s-1}t^{1-2s}&exp\left[-\pi |t|+2it-i\pi\left(\frac{1}{2}-\sigma\right)\right](1+O(1/t))\hspace{1mm}\nonumber \\
    &\times\left((\log^2 2\pi +\psi^2(1-s)-\log(4\pi^2)\psi(1-s))\sin^2\left(\frac{\pi s}{2}\right)\right.\\
    &\hspace{1cm}\left.+\frac{\pi^2}{4}\cos^2\left(\frac{\pi s}{2}\right)+\frac{\pi}{2}(\log 2\pi-\psi(1-s))\sin \pi s\right).\label{chi'square}
\end{align}
Let 
\begin{align}\label{Fdefn}
F(t)=-2t\log|t|+2t+t\log(4\pi^2nxk^{-2}).
\end{align}
Consider first,
\begin{align}
    L_{11}=\int_{3}^{T}\log^2(2\pi)\frac{t}{s}\hspace{1mm}t^{2\delta}e^{-\pi|t|}e^{-i\pi(\delta+1/2)}e^{iF(t)}\sin^2(\pi s/2)\left(\frac{4\pi^2nx}{k^2}\right)^{-\delta}(1+O(1/t))\hspace{1mm}dt.
\end{align}
Using \eqref{sinestimate}, one can observe that
\begin{align}\label{modI1}
 |L_{11}|\leq \frac{\log^2(2\pi)}{4}\hspace{1mm}\left(\frac{4\pi^2nx}{k^2}\right)^{-\delta}\left[\hspace{1mm}\left|\int_3^Tt^{2\delta}e^{iF(t)}dt\right|+\left|\int_3^Tt^{2\delta}e^{iF(t)}O(1/t)dt\right|\hspace{1mm}\right].
\end{align}
Clearly, the second integral is $\ll T^{2\delta}$. For the first integral in \eqref{modI1}, we take $G(t)=t^{2\delta}$ in Lemma \eqref{expintegrallemma} together with the fact 
$$\left|\frac{F'(t)}{G(t)}\right|=t^{-2\delta}\left|\log\left(\frac{4\pi^2nx}{k^2t^2}\right)\right|\geq T^{-2\delta}\log\left(\frac{n}{N+1/2}\right)$$
to obtain
$$\left|\int_3^Tt^{2\delta}e^{iF(t)}dt\right|\leq 4T^{2\delta}\left(\log\left(\frac{n}{N+1/2}\right)\right)^{-1}.$$
Thus substituting the above bound in \eqref{modI1}, we get
\begin{align}\label{I1}
    L_{11}\ll n^{-\delta}x^{-\delta}k^{2\delta}T^{2\delta}\left(\left(\log\left(\frac{n}{N+1/2}\right)\right)^{-1}+1\right).
\end{align}
Next consider
$$L_{12}=\int_{3}^{T}\frac{t}{s}\hspace{1mm}t^{2\delta}e^{-\pi|t|}e^{-i\pi(\delta+1/2)}e^{iF(t)}\psi^2(1-s)\sin^2(\pi s/2)\left(\frac{4\pi^2nx}{k^2}\right)^{-\delta}(1+O(1/t))\hspace{1mm}dt,$$
Using the fact $\psi^2(1-s)=\log^2(1-s)+O\left(\frac{\log|t|}{|t|}\right),$ the above integral can be estimated as
\begin{align}\label{modI2}
    |L_{12}|\leq \frac{1}{4}\left(\frac{4\pi^2nx}{k^2}\right)^{-\delta}\left[\left|\int_3^T t^{2\delta}\log^2(1-s)e^{iF(t)}dt\right|+O\left(\int_3^T \frac{t^{2\delta}\log^2 t}{t}dt\right)\right].
\end{align}
Taking into account $\log^2(1-s)
=\log^2t+i3\pi\log t-9\pi^2/4+O\left(\frac{\log t}{t}\right)$, from \eqref{modI2} we obtain 
\begin{align}
    |L_{12}|\leq \frac{1}{4}\left(\frac{4\pi^2nx}{k^2}\right)^{-\delta}\left[\left|\int_3^T t^{2\delta}\log^2 t\hspace{1mm} e^{iF(t)}dt\right|\right.&\left.+3\pi\left|\int_3^T t^{2\delta}\log t\hspace{1mm} e^{iF(t)}dt\right|\right.\notag\\
    &\left.+\frac{9\pi^2}{4}\left|\int_3^T t^{2\delta}\hspace{1mm}e^{iF(t)}dt\right|+O(T^{3\delta})\right].\label{modI2final}
\end{align}
For the first integral above, taking $G(t)=t^{2\delta}\log^2t$ in Lemma \eqref{expintegrallemma}, together with the fact
\begin{align}
    \left|\frac{F'(t)}{G(t)}\right|\geq \frac{1}{T^{2\delta}\log^2 T}\log\left(\frac{n}{N+1/2}\right)
\end{align}
by \eqref{Tparameter}, we obtain that the first integral in \eqref{modI2final} is   $$\leq 4\hspace{1mm}T^{2\delta}\log^2 T\left(\log\left(\frac{n}{N+1/2}\right)\right)^{-1}.$$
Similarly, we may show that the other two integrals in \eqref{modI2final} are bounded by
$$\ll \hspace{1mm}T^{2\delta}\log T\left(\log\left(\frac{n}{N+1/2}\right)\right)^{-1}.$$
Hence, \eqref{modI2final} becomes
\begin{align}\label{I2estimate}
    L_{12}\ll n^{-\delta}x^{-\delta}k^{2\delta}T^{3\delta}\left(\left(\log\left(\frac{n}{N+1/2}\right)\right)^{-1}+1\right).
\end{align}
The other terms in \eqref{chi'square} give rise to the following integrals :-
$$L_{13}=\int_{3}^{T}\log (4\pi^2)\frac{t}{s}\hspace{1mm}t^{2\delta}e^{-\pi|t|}e^{-i\pi(\delta+1/2)}e^{iF(t)}\psi(1-s)\sin^2(\pi s/2)\left(\frac{4\pi^2nx}{k^2}\right)^{-\delta}(1+O(1/t))\hspace{1mm}dt,$$
$$L_{14}=\frac{\pi}{2}\int_{3}^{T}\frac{t}{s}\hspace{1mm}t^{2\delta}e^{-\pi|t|}e^{-i\pi(\delta+1/2)}e^{iF(t)}(\log(2\pi)-\psi(1-s))\sin(\pi s)\left(\frac{4\pi^2nx}{k^2}\right)^{-\delta}(1+O(1/t))\hspace{1mm}dt,$$
and 
$$L_{15}=\frac{\pi^2}{4}\int_{3}^{T}\frac{t}{s}\hspace{1mm}t^{2\delta}e^{-\pi|t|}e^{-i\pi(\delta+1/2)}e^{iF(t)}\cos^2(\pi s/2)\left(\frac{4\pi^2nx}{k^2}\right)^{-\delta}(1+O(1/t))\hspace{1mm}dt.$$
Employing Lemma \eqref{expintegrallemma} repeatedly for every integral, 
we may show that $L_{13}$, $L_{14}$ and $L_{15}$ are
\begin{align}\label{I5}
\ll n^{-\delta}x^{-\delta}k^{2\delta}T^{2\delta}\left(\left(\log\left(\frac{n}{N+1/2}\right)\right)^{-1}+1\right).
\end{align}
Hence, using \eqref{I1}, \eqref{I2estimate} and \eqref{I5} in \eqref{I12}, we get
\begin{align}
    I_1\ll n^{-\delta}x^{-\delta}k^{2\delta}T^{3\delta}\left(\left(\log\left(\frac{n}{N+1/2}\right)\right)^{-1}+1\right).
\end{align}
Hence, we obtain
\begin{align}
  &  k\sum_{n>N}\frac{d(n)}{2n}e_k(-n\overline{h})\hspace{1mm}\frac{1}{2\pi i}\int_{-\delta-iT}^{-\delta+iT}\llap{=\quad\quad\hspace{8pt}}(\chi'(s))^2\left(\frac{nx}{k^2}\right)^s s^{-1}ds\\
    &\ll x^{-\delta}k^{1+2\delta}T^{3\delta}\sum_{n>N}\frac{1}{n^{1+\frac{\delta}{2}}}\left(1+\frac{1}{\log n}\left(1+O\left(\frac{\log (N+1/2)}{\log n}\right)\right)\right)\\
    &\ll k\hspace{1mm}T^{\delta}\left(N+\frac{1}{2}\right)^{\delta}\ll k\hspace{1mm}x^{2\delta},
\end{align}
using the facts \eqref{Tparameter}, $T\ll k^{-1}x$ and $N\ll x$. Next, we consider $$I_2=\frac{1}{2\pi i}\int_{-\delta+i3}^{-\delta+iT}\chi(s)\chi'(s)\left(\frac{nx}{k^2}\right)^s s^{-1}ds.$$
By \eqref{chiformula} and \eqref{chi'formula}, we get
\begin{align}
    \chi(s)\chi'(s)&=\chi^{2}(s)(\log (2\pi)-\psi(1-s))+\frac{\pi}{2}\chi(s)\chi_1(s)\notag\\
    &= (2\pi)^{2s-1}t^{1-2s}exp\left[-\pi|t|+2it-i\pi\left(\frac{1}{2}-\sigma\right)\right](1+O(1/t))\notag\\
    &\hspace{24mm}\left[4\sin^2\left(\frac{\pi s}{2}\right)(\log(2\pi)-\psi(1-s))+\pi\sin(\pi s)\right].\label{chichi'}
\end{align}
Let $F(t)$ be as defined in \eqref{Fdefn}, then
\begin{align}
    I_2=\int_{3}^{T}\frac{t}{s}\hspace{1mm}t^{2\delta}e^{-\pi|t|}e^{-i\pi(\delta+1/2)}e^{iF(t)}((\log(2\pi)-\psi(1&-s))4\sin^2(\pi s/2)+\pi\sin(\pi s))\times\\
    &\hspace{0.7cm}\times\left(\frac{4\pi^2nx}{k^2}\right)^{-\delta}
    (1+O(1/t))\hspace{1mm}dt.
\end{align}
Using 
Lemma \eqref{expintegrallemma}, we may show that
$$I_2\ll n^{-\delta}x^{-\delta}k^{2\delta}T^{3\delta}\left(\left(\log\left(\frac{n}{N+1/2}\right)\right)^{-1}+1\right),$$
as done in the previous case. Then,
\begin{align}
   & k\sum_{n>N}\frac{(d(n)\log k+d_{(0,1)}(n)) e_k(-n\overline{h})}{n}\frac{1}{2\pi i}\int_{-\delta-iT}^{-\delta+iT}\llap{=\quad\quad\hspace{8pt}}\chi(s)\chi'(s)\left(\frac{nx}{k^2}\right)^s s^{-1}ds\\
   & \ll x^{-\delta}k^{1+3\delta}T^{3\delta}\sum_{n>N}\frac{1}{n^{1+\frac{\delta}{2}}}
    \ll k\hspace{1mm}x^{2\delta},
\end{align}
as shown in the previous case. Next, we consider
$$I_3=\frac{1}{2\pi i}\int_{-\delta+i3}^{-\delta+iT}\llap{=\quad\quad\hspace{8pt}}(\chi(s))^2\left(\frac{nx}{k^2}\right)^s s^{-1}ds.$$
Using \eqref{chiformula} and proceeding as before, we get
\begin{align}
    I_3\ll n^{-\delta}x^{-\delta}k^{2\delta}T^{2\delta}\left(\left(\log\left(\frac{n}{N+1/2}\right)\right)^{-1}+1\right).
\end{align}
Hence, noting  $k\leq x$, we obtain
\begin{align}
    k\sum_{n>N}\frac{(d(n)\log^2 k+2d_{(0,1)}(n)\log k+D_{(1)}(n)) e_k(-n\overline{h})}{2n}&\hspace{1mm}\frac{1}{2\pi i}\int_{-\delta-iT}^{-\delta+iT}\llap{=\quad\quad\hspace{8pt}}(\chi(s))^2\left(\frac{nx}{k^2}\right)^s s^{-1}ds\\
    &\ll k\hspace{1mm}x^{2\delta}.
\end{align}
Same estimates follow for the sums involving $(\chi_1'(s))^2$, $\chi_1(s)\chi_1'(s)$ and $(\chi_1(s))^2$ in \eqref{deltafinalform}, hence the tail $n>N$ of all the series can be omitted with an error $\ll k\hspace{0.4mm}x^{2\delta}$. Thus, the error term in \eqref{deltafinalform} is
$\ll x^{1+2\delta}T^{-1}+k\hspace{0.4mm}x^{2\delta}\ll
kx^{\frac{1}{2}+2\delta}N^{-\frac{1}{2}}$
as $N\ll x$. Then \eqref{deltafinalform} becomes
\begin{align}
    &\Delta(x,h/k)=\hspace{1mm}k\sum_{n\leq N}\frac{d(n)}{2n}e_k(-n\overline{h})\hspace{1mm}\frac{1}{2\pi i}\int_{-\delta-iT}^{-\delta+iT}\llap{=\quad\quad\hspace{8pt}}(\chi'(s))^2\left(\frac{nx}{k^2}\right)^s s^{-1}ds\notag\\
    &\hspace{1.7cm}-k\sum_{n\leq N}\frac{d(n)}{2n}e_k(-n\overline{h})\hspace{1mm}\frac{1}{2\pi i}\int_{-\delta-iT}^{-\delta+iT}\llap{=\quad\quad\hspace{8pt}}(\chi_1'(s))^2\left(\frac{nx}{k^2}\right)^s s^{-1}ds\notag\\
    &\hspace{1cm}-k\sum_{n\leq N}\frac{(d(n)\log k+d_{(0,1)}(n))}{n}e_k(-n\overline{h})\hspace{1mm}\frac{1}{2\pi i}\int_{-\delta-iT}^{-\delta+iT}\llap{=\quad\quad\hspace{8pt}}\chi(s)\chi'(s)\left(\frac{nx}{k^2}\right)^s s^{-1}ds\notag\\
    &\hspace{1cm}+k\sum_{n\leq N}\frac{(d(n)\log k+d_{(0,1)}(n))}{n}e_k(-n\overline{h})\hspace{1mm}\frac{1}{2\pi i}\int_{-\delta-iT}^{-\delta+iT}\llap{=\quad\quad\hspace{8pt}}\chi_1(s)\chi_1'(s)\left(\frac{nx}{k^2}\right)^s s^{-1}ds\notag\\
    &+k\sum_{n\leq N}\frac{(d(n)\log^2 k+2d_{(0,1)}(n)\log k+D_{(1)}(n))}{2n}e_k(-n\overline{h})\hspace{1mm}\frac{1}{2\pi i}\int_{-\delta-iT}^{-\delta+iT}\llap{=\quad\quad\hspace{8pt}}(\chi(s))^2\left(\frac{nx}{k^2}\right)^s s^{-1}ds\notag\\
    &-k\sum_{n\leq N}\frac{(d(n)\log^2 k+2d_{(0,1)}(n)\log k+D_{(1)}(n))}{2n}e_k(-n\overline{h})\hspace{1mm}\frac{1}{2\pi i}\int_{-\delta-iT}^{-\delta+iT}\llap{=\quad\quad\hspace{8pt}}(\chi_1(s))^2\left(\frac{nx}{k^2}\right)^s s^{-1}ds\notag\\
    &\hspace{11cm}+O(k\hspace{0.4mm}x^{\frac{1}{2}+2\delta}N^{-\frac{1}{2}}).\label{deltafinalform2}
\end{align}
Let $X_n=\frac{nx}{k^2}$. Define the following integrals :-
$$P_1(X_n):=\frac{1}{2\pi i}\int_{-\delta-iT}^{-\delta+iT}\llap{=\quad\quad\hspace{8pt}}(\chi'(s))^2\hspace{1mm}X_n^s s^{-1}ds,$$
$$P_2(X_n):=\frac{1}{2\pi i}\int_{-\delta-iT}^{-\delta+iT}\llap{=\quad\quad\hspace{8pt}}\chi(s)\chi'(s)\hspace{1mm}X_n^s s^{-1}ds,$$
and
$$P_3(X_n):=\frac{1}{2\pi i}\int_{-\delta-iT}^{-\delta+iT}\llap{=\quad\quad\hspace{8pt}}(\chi(s))^2\hspace{1mm}X_n^s s^{-1}ds.$$
Let us first evaluate $P_2(X_n)$. By applying integration by parts
\begin{align}
    P_2(X_n)&
    &=\frac{1}{4\pi i}\left[-\log X_n \int_{-\delta-iT}^{-\delta+iT}\llap{=\quad\quad\hspace{8pt}} (\chi(s))^2\frac{X_n^s}{s}ds+\int_{-\delta-iT}^{-\delta+iT}\llap{=\quad\quad\hspace{8pt}} (\chi(s))^2\frac{X_n^s}{s^2}ds\right]+O(X_n^{-\delta}T^{2\delta}),\label{P21}
\end{align}
Note that employing \eqref{chichi1}, the first integral in \eqref{P21} can be expressed as
\begin{align}
    \int_{-\delta-iT}^{-\delta+iT}\llap{=\quad\quad\hspace{8pt}} (\chi(s))^2 \log X_n\frac{X_n^s}{s}ds&=\frac{\log X_n}{2\pi^2}\left[\int_{-\delta-iT}^{-\delta+iT}\llap{=\quad\quad\hspace{8pt}}\right.\Gamma^2(1-s)(4\pi^2X_n)^s s^{-1}ds\notag\\
    &\left.-\int_{-\delta-iT}^{-\delta+iT}\llap{=\quad\quad\hspace{8pt}}\Gamma^2(1-s)\cos(\pi s)(4\pi^2X_n)^s s^{-1}ds\right].\label{P22}
\end{align}
Moreover, employing \eqref{gammasmallestimate} the second integral in \eqref{P21} can be estimated as
\begin{align}\label{P23}
    \int_{-\delta-iT}^{-\delta+iT}\llap{=\quad\quad\hspace{8pt}}\frac{X_n^s}{s^2}\chi^2(s)ds&
    \ll \left(\int_{-T}^{-3}+\int_{3}^{T}\right)\frac{X_n^{-\delta}|t|^{1+2\delta}}{|t|^2}dt\ll X_n^{-\delta}T^{2\delta},
\end{align}
Combining \eqref{P22} and \eqref{P23} in \eqref{P21},
\begin{align}
    P_2(X_n)=\frac{\log X_n}{4\pi^2} &\Bigg[\frac{1}{2\pi i}\int_{-\delta-iT}^{-\delta+iT}\llap{=\quad\quad\hspace{8pt}}\Gamma^2(1-s)\cos(\pi s)(4\pi^2X_n)^s s^{-1}ds\\
    &-\frac{1}{2\pi i}\left.\int_{-\delta-iT}^{-\delta+iT}\llap{=\quad\quad\hspace{8pt}}\Gamma^2(1-s)(4\pi^2X_n)^s s^{-1}ds\right]+O(X_n^{-\delta}T^{2\delta}).\label{P2final}
\end{align}
It is not difficult to obtain
\begin{align}\label{P3final}
  P_3(X_n)=\frac{1}{2\pi^2}& \Bigg[\frac{1}{2\pi i}\int_{-\delta-iT}^{-\delta+iT}\llap{=\quad\quad\hspace{8pt}}\Gamma^2(1-s)(4\pi^2X_n)^s s^{-1}ds\\
  &-\frac{1}{2\pi i}\int_{-\delta-iT}^{-\delta+iT}\llap{=\quad\quad\hspace{8pt}}\Gamma^2(1-s)\cos(\pi s)(4\pi^2X_n)^s s^{-1}ds\Bigg].
\end{align}
Next, we can observe that $P_1(X_n)$ can be rewritten as
\begin{align}\label{P11}
    P_1(X_n)=\frac{1}{2\pi i}\int_{-\delta-iT}^{-\delta+iT}\llap{=\quad\quad\hspace{8pt}}(\chi(s)\chi'(s))'X_n^s s^{-1}ds-\frac{1}{2\pi i}\int_{-\delta-iT}^{-\delta+iT}\llap{=\quad\quad\hspace{8pt}}\chi(s)\chi''(s)X_n^s s^{-1}ds.
\end{align}
By integration by parts, the first integral in \eqref{P11} can be expressed as
\begin{align}
    \int_{-\delta-iT}^{-\delta+iT}\llap{=\quad\quad\hspace{8pt}}(\chi(s)\chi'(s))'X_n^s s^{-1}ds=&-\log X_n\int_{-\delta-iT}^{-\delta+iT}\llap{=\quad\quad\hspace{8pt}}\chi(s)\chi'(s)X_n^s s^{-1}ds\notag\\
    &+\int_{-\delta-iT}^{-\delta+iT}\llap{=\quad\quad\hspace{8pt}}\chi(s)\chi'(s)X_n^s s^{-2}ds+O(X_n^{-\delta}T^{3\delta}).\label{P12}
\end{align}
We employ \eqref{chikderivative} at $\sigma=-\delta$ and $|t|=T$ to obtain
\begin{align}
    \chi(s)\chi'(s)=4(2\pi)^{2s-2}\Gamma^2(1-s)\sin^2(\pi s/2)\left(-\log\frac{T}{2\pi}\right)+O(T^{2\delta}),
\end{align}
and then substituting the above expression, we deduce
\begin{align}\label{P13}
    \int_{-\delta-iT}^{-\delta+iT}\llap{=\quad\quad\hspace{8pt}}\chi(s)\chi'(s)X_n^s s^{-2}ds
    &\ll \left(\int_{-T}^{-3}+\int_{3}^{T}\right)\left(\frac{|t|^{1+3\delta}}{|s|^2}+\frac{|t|^{2\delta}}{|s|^2}\right)X_n^{-\delta}dt\ll X_n^{-\delta}T^{3\delta},
\end{align}
using \eqref{gammasmallestimate}.
Then by \eqref{P12} and \eqref{P13} we get
\begin{align}\label{P14}
    \int_{-\delta-iT}^{-\delta+iT}\llap{=\quad\quad\hspace{8pt}}(\chi(s)\chi'(s))'X_n^s s^{-1}ds=-2\pi i\log X_n\hspace{1mm}P_2(X_n)+O(X_n^{-\delta}T^{3\delta}).
\end{align}
Next, we evaluate the second integral in \eqref{P11}. Since,
\begin{align}
    \chi(s)\chi''(s)=\left(\frac{|t|}{2\pi}\right)^{1+2\delta}e^{iE(t)}\left(-\log\frac{|t|}{2\pi}\right)^{2}+O(|t|^{3\delta}),
\end{align}
by \eqref{chi} and \eqref{chikderivative},
hence
\begin{align}\label{P15}
    \int_{-\delta-iT}^{-\delta+iT}\llap{=\quad\quad\hspace{8pt}}\chi(s)\chi''(s)X_n^s s^{-1}ds=\int_{-T}^{T}\llap{=\quad\hspace{0.1pt}}\left(\frac{|t|}{2\pi}\right)^{1+2\delta}e^{iE(t)}\left(-\log\frac{|t|}{2\pi}\right)^2\frac{X_n^{-\delta+it}}{-\delta+it}idt+O(X_n^{-\delta}T^{3\delta}).
\end{align}
Since $\frac{d}{dt} \left(\frac{e^{iE(t)}}{2i}\right)=\left(-\log\frac{|t|}{2\pi}\right)e^{iE(t)},$ by integration by parts, we have
\begin{align}
  &  \int_{-T}^{T}\llap{=\quad\hspace{0.1pt}}\left(\frac{|t|}{2\pi}\right)^{1+2\delta}e^{iE(t)}\left(-\log\frac{|t|}{2\pi}\right)^2\frac{X_n^{-\delta+it}}{-\delta+it}idt\\
   & =\frac{1}{2i}\log X_n\int_{-T}^{T}\llap{=\quad\hspace{0.1pt}}\left(\frac{|t|}{2\pi}\right)^{1+2\delta}\left(-\log\frac{|t|}{2\pi}\right)
    e^{iE(t)}\frac{X_n^{-\delta+it}}{-\delta+it}dt+O((1+T^{3\delta})X_n^{-\delta}).
\end{align}
Again using integration by parts, we have
\begin{align}
  &  \int_{-T}^{T}\llap{=\quad\hspace{0.1pt}}\left(\frac{|t|}{2\pi}\right)^{1+2\delta}e^{iE(t)}\left(-\log\frac{|t|}{2\pi}\right)^2\frac{X_n^{-\delta+it}}{-\delta+it}idt\\
    &=\frac{1}{4}\log^2 X_n\int_{-T}^{T}\llap{=\quad\hspace{0.1pt}}\left(\frac{|t|}{2\pi}\right)^{1+2\delta}e^{iE(t)}(1+O(1/|t|))\frac{X_n^{-\delta+it}}{-\delta+it}idt+O((1+T^{3\delta})X_n^{-\delta}\log^2 X_n)\\
    &=\frac{1}{4}\log^2 X_n\int_{-\delta-iT}^{-\delta+iT}\llap{=\quad\quad\hspace{8pt}}\chi^2(s)\frac{X_n^s}{s}ds+O((1+T^{3\delta})X_n^{-\delta}\log^2 X_n)\\
    &=\frac{i\pi}{2}\log^2 X_n\hspace{1mm}P_3(X_n)+O((1+T^{3\delta})X_n^{-\delta}\log^2 X_n).
\end{align}
Hence \eqref{P15} gives
\begin{align}\label{P16}
    \int_{-\delta-iT}^{-\delta+iT}\llap{=\quad\quad\hspace{8pt}}\chi(s)\chi''(s)X_n^s s^{-1}ds=\frac{i\pi}{2}\log^2 X_n\hspace{1mm}P_3(X_n)+O((1+T^{3\delta})X_n^{-\delta}\log^2 X_n).
\end{align}
Using \eqref{P14} and \eqref{P16} in \eqref{P11}, we have
\begin{align}\label{P1final}
    P_1(X_n)=-\log X_n\hspace{1mm}P_2(X_n)-\frac{1}{4}\log^2X_n\hspace{1mm}P_3(X_n)+O(X_n^{-\delta}\log^2X_n\hspace{1mm}T^{3\delta}).
\end{align}
Similarly, using Lemma \eqref{chi1estimate}, the other integrals in \eqref{deltafinalform2} can be evaluated.
Then,
\begin{align}
    \Delta(x,&h/k)=-\frac{k}{4\pi^2}\sum_{n\leq N}\frac{d(n)}{2n}e_k(-n\overline{h})\hspace{1mm}\log^2 \left(\frac{nx}{k^2}\right)\underbrace{\frac{1}{2\pi i}\int_{-\delta-iT}^{-\delta+iT}\llap{=\quad\quad\hspace{8pt}}\Gamma^2(1-s)\cos(\pi s)(4\pi^2X_n)^ss^{-1}ds}_{\mathcal{T}(X_n)}\notag\\
    &\hspace{1.2cm}-\frac{k}{2\pi^2}\sum_{n\leq N}\frac{(d(n)\log k+d_{(0,1)}(n))}{n}e_k(-n\overline{h})\hspace{1mm}\log \left(\frac{nx}{k^2}\right)\hspace{1mm}\mathcal{T}(X_n)\notag\\
    &-\frac{k}{\pi^2}\sum_{n\leq N}\frac{(d(n)\log^2 k+2d_{(0,1)}(n)\log k+D_{(1)}(n))}{2n}e_k(-n\overline{h})\hspace{1mm}\mathcal{T}(X_n)
    +O(k\hspace{0.4mm}x^{\frac{1}{2}+3\delta}N^{-\frac{1}{2}}),\label{deltafinalform3}
\end{align}
since the contribution of error terms in \eqref{P2final} and \eqref{P1final} and that of other integrals with their respective sums is $\ll k\hspace{0.4mm}x^{3\delta}$, 
and $X_n=\frac{nx}{k^2}$. Note that $\mathcal{T}(X_n)$ can be rewritten as
\begin{align}
    \mathcal{T}(X_n)=\frac{1}{2\pi i}\int_{-\delta-iT}^{-\delta+iT}\Gamma^2(1-s)\cos(\pi s)(4\pi^2X_n)^ss^{-1}ds+O(n^{-\delta}x^{\delta}).
\end{align}
We now extend the path of integration in $\mathcal{T}(X_n)$ to the infinite broken line through the points $\delta-i\infty$, $\delta-iT$, $-\delta-iT$, $-\delta+iT$, $\delta+iT$ and $\delta+i\infty$. Then we estimate the consequent bounds for the horizontal and infinite segments when we replace $\mathcal{T}(X_n)$ in \eqref{deltafinalform3} by the new integral. First, by \eqref{Tparameter}, \eqref{cosestimate} and \eqref{gammasmallestimate},
\begin{align}
&\frac{k}{4\pi^2}\sum_{n\leq N}\frac{d(n)}{2n}e_k(-n\overline{h})\hspace{1mm}\log^2 \left(\frac{nx}{k^2}\right)\underbrace{\frac{1}{2\pi i}\int_{-\delta\pm iT}^{\delta\pm iT}\Gamma^2(1-s)\cos(\pi s)(4\pi^2X_n)^ss^{-1}ds}_{\mathcal{F}(X_n)}\\
    &\ll k\sum_{n\leq N}n^{-1+\delta/2}\log^2\left(\frac{nT^2}{N}\right)\int_{-\delta}^{\delta}\left(\frac{4\pi^2 nx}{k^2T^2}\right)^{\sigma}d\sigma\\
    &\ll k^{1-\delta}x^{\delta}\sum_{n\leq N}n^{-1+\delta/2}\int_{-\delta}^{\delta}\left(\frac{n}{N}\right)^{\sigma}d\sigma\\
   &\ll k^{1-\delta}x^{\delta}\sum_{n\leq N}n^{-1+\delta/2}\left(\frac{N}{n}\right)^{\delta}\ll k x^{2\delta}.
\end{align}
 Similarly follows for the other sums, that is
\begin{align}
    \frac{k}{2\pi^2}\sum_{n\leq N}\frac{(d(n)\log k+d_{(0,1)}(n))}{n}e_k(-n\overline{h})\log \left(\frac{nx}{k^2}\right)\mathcal{F}(X_n)
\ll k\hspace{0.4mm}x^{2\delta},
\end{align}
and
\begin{align}
    \frac{k}{\pi^2}\sum_{n\leq N}\frac{(d(n)\log^2 k+2d_{(0,1)}(n)\log k+D_{(1)}(n))}{2n} e_k(-n\overline{h})\mathcal{F}(X_n)
    \ll k\hspace{0.4mm}x^{2\delta}.
\end{align}
Next, by \eqref{Tparameter}, \eqref{cosestimate}, \eqref{gammabigestimate} and  Lemma \eqref{expintegrallemma}, and taking $F(t)$ as defined in \eqref{Fdefn},
\begin{align}
    & \frac{k}{4\pi^2}\sum_{n\leq N}\frac{d(n) e_k(-n\overline{h})}{2n}\hspace{1mm}\log^2 \left(\frac{nx}{k^2}\right)\underbrace{\frac{1}{2\pi i}\int_{\delta+iT}^{\delta+i\infty}\Gamma^2(1-s)\cos(\pi s)(4\pi^2X_n)^ss^{-1}ds}_{\mathcal{G}(X_n)}\\
     &\ll k^{1-2\delta}x^{\delta}\sum_{n\leq N}n^{-1+2\delta}\log^2\left(\frac{nx}{k^2}\right)\left|\int_T^{\infty}(-i+O(1/t))t^{-2\delta}e^{iF(t)}dt\right|\\
     &\hspace{1cm}+\hspace{1mm}k^{1-2\delta}x^{\delta}\sum_{n\leq N}n^{-1+2\delta}\log^2\left(\frac{nx}{k^2}\right)\left(\int_T^{\infty}t^{-2\delta-1}e^{iF(t)}dt\right)\\
     &\ll k^{1-3\delta}x^{2\delta}\sum_{n\leq N}n^{-1+2\delta}\left[\left|\int_T^{\infty}t^{-2\delta}e^{iF(t)}dt\right|+T^{-2\delta}\right]\\
     &\ll k^{1-3\delta}x^{2\delta}T^{-2\delta}\sum_{n\leq N}n^{-1+2\delta}\left[1+\left(\log\left(\frac{N+1/2}{n}\right)\right)^{-1}\right]\ll k\hspace{0.4mm}x^{2\delta}.
\end{align}
Similarly, we may show that 
\begin{align}
    \frac{k}{2\pi^2}\sum_{n\leq N}\frac{(d(n)\log k+d_{(0,1)}(n))}{n}e_k(-n\overline{h})\log \left(\frac{nx}{k^2}\right)\mathcal{G}(X_n)\ll k\hspace{0.4mm}x^{2\delta},
\end{align}
and
\begin{align}
    \frac{k}{\pi^2}\sum_{n\leq N}\frac{(d(n)\log^2 k+2d_{(0,1)}(n)\log k+D_{(1)}(n))}{2n}&e_k(-n\overline{h})\mathcal{G}(X_n)\ll k\hspace{0.4mm}x^{2\delta},
\end{align}
and similarly for the integrals over $[\delta-i\infty, \delta-iT]$. 
These estimations show that \eqref{deltafinalform3} remains valid even if $\mathcal{T}(X_n)$ are replaced by the modified integrals, which are of the type $I_1$ in \eqref{firstintegral} for $a=0$ and $X_n=nxk^{-2}$, and thus equal to
$$2\pi^2(nx)^{1/2}k^{-1}Y_1(4\pi\sqrt{nx}/k)$$
by \eqref{firstintegral}. This gives
\begin{align}
    \frac{1}{2\pi i}\int_{C_0}\Gamma^2(1-s)\cos(\pi s)\left(\frac{4\pi^2nx}{k^2}\right)^s s^{-1}ds=-\sqrt{2}\pi(nx)^{1/4}&k^{-1/2}\cos\left(\frac{4\pi\sqrt{nx}}{k}-\frac{\pi}{4}\right)\notag\\
    &+O(n^{-3/4}x^{-3/4}k^{3/2}),\label{newintegralC0}
\end{align}
where $C_0$ is the contour described in Lemma \eqref{integraltobessel} for $a=0$, $\sigma_1=\delta$ and $\sigma_2=-\delta$.\\
The assertion \eqref{errorformula} now follows when $\mathcal{T}(X_n)$ in \eqref{deltafinalform3} is replaced by the integral in \eqref{newintegralC0}.
\section{Proof of Theorem \ref{Delta0meansqtheorem}}\label{pfofmeansq1}
For $T\ll k$, we trivially have
\begin{align}\label{meansq2}
        \int_1^T|\Delta(x,h/k)|^2dx\ll k^2\log^6k \hspace{1mm} T,
    \end{align}
    by \eqref{B_0formula} and \eqref{F(0)bound}. It is then sufficient to prove the formula  for $k\leq T$,
    \begin{align}\label{meansq1}
\int_T^{2T}|\Delta(x,h/k)|^2dx=\mathcal{H}(T)+O(k^2T^{1+\varepsilon})+O(k^{3/2}T^{5/4+\varepsilon})
\end{align}
where $\mathcal{H}(T)$ is given by
\begin{align}
\mathcal{H}(T)&=
\frac{kx^{3/2}}{6\pi^2}\sum_{n=1}^{\infty}\frac{1}{n^{3/2}}\bigg[\frac{1}{16}d^2(n)\sum_{i=0}^{4}\left(-\frac{2}{3}\right)^{i}{}^4P_i(\log x)^{4-i}\notag\\
  & +\frac{1}{2}d(n)f_1(n)\sum_{i=0}^{3}\left(-\frac{2}{3}\right)^{i}{}^3P_i\hspace{1mm}(\log x)^{3-i} \notag \\
    &+\frac{1}{2}(d(n)f_2(n)+f_1^2(n))\sum_{i=0}^{2}\left(-\frac{2}{3}\right)^{i}{}^2P_i\hspace{1mm}(\log x)^{2-i} \notag\\
    &+2f_1(n)f_2(n)\sum_{i=0}^{1}\left(-\frac{2}{3}\right)^{i}{}^1P_i\hspace{1mm}(\log x)^{1-i}+f_2^2(n)\bigg]\bigg|_T^{2T}\label{meansq11},
    \end{align}
    since we can substitute $T$ by $X/2$, $X/2^2$, $\cdots$, and \eqref{meansquareestimate} then follows easily from \eqref{meansq2} and \eqref{meansq1}.
    To prove \eqref{meansq1}, let $T\leq x\leq 2T$ and choose $N=T$ in \eqref{errorformula}, which can be written as
    \begin{align}\label{meansq3}
    \Delta(x,h/k)=S(x,h/k)+O(kx^\varepsilon).
    \end{align}
    Squaring out $|S(x,h/k)|^2$, we obtain
    \begin{align}\label{meansq4}
\int_T^{2T}|S(x,h/k)|^2dx=S_0+O(k\left(|S_{1}|+|S_2|\right)),
    \end{align}
    where
    \begin{align}
    S_0=\frac{k}{4\pi^2}\sum_{n\leq T}n^{-3/2}\int_T^{2T}&x^{1/2}\bigg[\frac{1}{16}d^2(n)\log^4x+\frac{1}{2}d(n)f_1(n)\log^3x \notag \\
    &+\frac{1}{2}\left(d(n)f_2(n)+f_1^2(n)\right)\log^2x+2f_1(n)f_2(n)\log x+f_2^2(n)\bigg]dx, \label{S0}
    \end{align}
    \begin{align}
        S_1=\underset{\substack{n,m\leq T\\ m\neq n}}{\sum}(m&n)^{-3/4}\int_T^{2T}x^{1/2}e(2(\sqrt{m}-\sqrt{n})\sqrt{x}/k)\bigg[\frac{1}{16}d(n)d(m)\log^4x+\frac{1}{2}d(n)f_1(m)\log^3x\\
        &+\left(\frac{1}{2}d(n)f_2(m)+f_1(n)f_1(m)\right)\log^2x+2f_1(n)f_2(m)\log x+f_2(n)f_2(m)\bigg]dx,
    \end{align}
    and
    \begin{align}
        S_2=\sum_{m,n\leq T}(m&n)^{-3/4}\int_T^{2T}x^{1/2}e(2(\sqrt{m}+\sqrt{n})\sqrt{x}/k)\bigg[\frac{1}{16}d(n)d(m)\log^4x+\frac{1}{2}d(n)f_1(m)\log^3x\\
        &+\left(\frac{1}{2}d(n)f_2(m)+f_1(n)f_1(m)\right)\log^2x+2f_1(n)f_2(m)\log x+f_2(n)f_2(m)\bigg]dx,
    \end{align}
    $f_1(n)$ and $f_2(n)$ are as given in \eqref{f1f2exp}. To establish the necessary bounds for $S_1$ and $S_2$, we first deduce the following lemma.
    \begin{lem}\label{meansqlem1}
        Let $a(n)$, $b(n)$ be arithmetical functions $\ll n^{\varepsilon}$ for all $n \geq n_0$ and $\varepsilon>0$. Then for $0\leq i\leq 4$, $i\in\mathbb{Z}$ and $T\geq 1$, we have
        \begin{align}
            \underset{\substack{n,m\leq T\\ m\neq n}}{\sum}a(m)b(n)(mn)^{-3/4}\int_T^{2T}x^{1/2}e(2(\sqrt{m}\pm\sqrt{n})\sqrt{x}/k)\log^i x\hspace{0.5mm}dx\ll kT^{1+\varepsilon}.\label{meansqlemineq}
        \end{align}
    \end{lem}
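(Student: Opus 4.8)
The plan is to treat the inner integral for each fixed pair $(m,n)$ with $m\neq n$ by a first-derivative test, and then sum the resulting bounds over $m,n\le T$. First I would put the oscillatory integral into the shape required by Lemma \eqref{expintegrallemma}. Setting $G(x)=x^{1/2}\log^i x$ and $F(x)=4\pi(\sqrt m\pm\sqrt n)\sqrt x/k$, so that $e(2(\sqrt m\pm\sqrt n)\sqrt x/k)=e^{iF(x)}$, one computes $F'(x)=2\pi(\sqrt m\pm\sqrt n)/(k\sqrt x)$, which is nonzero throughout $[T,2T]$ precisely because $m\neq n$ (for the lower sign) and because $m,n\ge 1$ (for the upper sign). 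The quotient $G(x)/F'(x)=kx\log^i x/(2\pi(\sqrt m\pm\sqrt n))$ is monotonic on $[T,2T]$, since $\frac{d}{dx}(x\log^i x)=\log^{i-1}x(\log x+i)\ge 0$ for $x\ge 1$, while
$$\left|\frac{F'(x)}{G(x)}\right|=\frac{2\pi|\sqrt m\pm\sqrt n|}{kx\log^i x}\ \gg\ \frac{|\sqrt m\pm\sqrt n|}{kT^{1+\varepsilon}},\qquad x\in[T,2T],$$
because $x\log^i x\ll T^{1+\varepsilon}$ there. Lemma \eqref{expintegrallemma} then yields
$$\left|\int_T^{2T}x^{1/2}e(2(\sqrt m\pm\sqrt n)\sqrt x/k)\log^i x\,dx\right|\ll \frac{kT^{1+\varepsilon}}{|\sqrt m\pm\sqrt n|}.$$

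Next I would insert this bound into the double sum and use $|a(m)|,|b(n)|\ll (mn)^\varepsilon$, which reduces the assertion \eqref{meansqlemineq} to the estimate
$$\Sigma_{\pm}:=\sum_{\substack{m,n\le T\\ m\neq n}}\frac{(mn)^{-3/4+\varepsilon}}{|\sqrt m\pm\sqrt n|}\ \ll\ T^{\varepsilon}.$$
For the upper sign this is immediate from the arithmetic--geometric mean inequality $\sqrt m+\sqrt n\ge 2(mn)^{1/4}$, which gives $\Sigma_+\ll\sum_{m,n\le T}(mn)^{-1+\varepsilon}\ll(\log T)^2\ll T^{\varepsilon}$.

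The main obstacle is the lower sign, where $|\sqrt m-\sqrt n|$ may be small. Here I would rationalize, writing $|\sqrt m-\sqrt n|=|m-n|/(\sqrt m+\sqrt n)$, so the summand becomes $(mn)^{-3/4+\varepsilon}(\sqrt m+\sqrt n)/|m-n|$. By symmetry it suffices to treat $m>n$, where $\sqrt m+\sqrt n\le 2\sqrt m$ produces a summand $\ll m^{-1/4+\varepsilon}n^{-3/4+\varepsilon}/(m-n)$. Fixing $n$ and using $m^{-1/4+\varepsilon}\le n^{-1/4+\varepsilon}$ (valid since $m>n$ and $\varepsilon<1/4$) collapses the inner sum to $n^{-1+2\varepsilon}\sum_{j=1}^{T-n}j^{-1}\ll n^{-1+2\varepsilon}\log T$, and summing over $n\le T$ gives $\Sigma_-\ll(\log T)\sum_{n\le T}n^{-1+2\varepsilon}\ll T^{\varepsilon}$. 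Combining the two signs with the integral bound above establishes \eqref{meansqlemineq}. The delicate point throughout is that the small-denominator contribution in $\Sigma_-$ is tamed by a harmonic sum, so it costs only a logarithmic factor rather than an extra power of $T$; this is exactly what keeps the off-diagonal terms $S_1$ and $S_2$ within the admissible error $O(kT^{1+\varepsilon})$.
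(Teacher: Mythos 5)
Your proof is correct and takes essentially the same route as the paper: both apply the first-derivative test of Lemma \eqref{expintegrallemma} to bound the inner integral by $\ll kT^{1+\varepsilon}|\sqrt{m}\pm\sqrt{n}|^{-1}$, then rationalize $|\sqrt{m}-\sqrt{n}|=|m-n|/(\sqrt{m}+\sqrt{n})$ and tame the resulting off-diagonal double sum by a harmonic sum, which costs only logarithmic factors. The only (harmless) slip is the intermediate claim $\sum_{m,n\le T}(mn)^{-1+\varepsilon}\ll(\log T)^2$, which should read $\ll T^{2\varepsilon}$; since $\varepsilon$ is arbitrary this does not affect the conclusion.
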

    \begin{proof}
    Using Lemma \eqref{expintegrallemma},
        \begin{align}
           \int_T^{2T}x^{1/2}e(2(\sqrt{m}\pm\sqrt{n})\sqrt{x}/k)\log^i x\hspace{0.5mm}dx\ll kT|\sqrt{n}\pm\sqrt{m}|^{-1}\log^i x.
        \end{align}
        Thus, the expression in \eqref{meansqlemineq} is
        \begin{align}
            &\ll k\hspace{0.5mm}T^{1+\varepsilon}\underset{\substack{n,m\leq T\\ m< n}}{\sum}m^{-3/4}n^{-1/4}(n\pm m)^{-1}\notag\\
           &\ll k\hspace{0.5mm}T^{1+\varepsilon}\sum_{m\leq T}m^{-1}\ll k\hspace{0.5mm}T^{1+\varepsilon}.
        \end{align}
    \end{proof} 
    Next, simplifying the expression for $S_0$
  in \eqref{S0}, we obtain
\begin{align} \label{Po}
S_0 = \mathcal{H}(T),
\end{align}
where $\mathcal{H}(T)$ is defined in \eqref{meansq11}.
We now apply Lemma \eqref{meansqlem1} to obtain the bounds
\begin{align}
 S_1\ll kT^{1+\varepsilon}\  \ \mbox{and}  \ \  S_2\ll kT^{1+\varepsilon}.
\end{align}
Using these estimates, together with equations \eqref{Po}, \eqref{meansq3}, and \eqref{meansq4}, and applying the Cauchy–Schwarz inequality, we prove  \eqref{meansq1}.
\par Hence combining \eqref{meansq2} with \eqref{meansq1}, and taking into account $k\ll X^{\frac12-\varepsilon}$ we conclude the proof.
\section{Proof of Theorem \ref{Generalizationtheorem}}\label{pfofgeneralizationth}
    Let $\varepsilon>0$ be arbitrarily small. By a well-known summation formula [\cite{AI}, equation (A.14), p. 487], we have for any $c>1$,
    \begin{align}
        B_a(x,h/k)=\frac{1}{2\pi i}\int_{(c)}F(s,h/k)x^{s+a}(s(s+1)\cdots (s+a))^{-1}ds.\label{Baperron}
    \end{align}
   We take $a\geq 1$ and deform the path of integration to a broken contour 
 $C_a$ passing through $-a/2+\varepsilon-i\infty$, $-a/2+\varepsilon-iT$, $-a-1/2-iT$, $-a-1/2+iT$, $-a/2+\varepsilon+iT$ and $-a/2+\varepsilon+i\infty$. Since $a\geq1$, we may choose 
$\varepsilon>0$ small enough such that $-a/2+\varepsilon<0$. Noting that $F(s,h/k)$ has a singularity at $s=1$, we define an auxiliary function as done in \eqref{auxfunc}. Then these two functions have the same order of magnitude in the region $|t|\geq 1$, $-a/2+\varepsilon\leq \sigma\leq c$. To estimate the auxiliary function on the line $\sigma=-a/2+\varepsilon$, we apply the functional equation \eqref{altfunceq}.
  \par  Using the Phragm\'{e}n-Lindel\"{o}f principle [\cite{HR}, (33.4)], we obtain that the integrand is 
    $$\ll |t|^{-\varepsilon} \hspace{2mm}\text{for} \hspace{2mm}|t|\geq 1\hspace{2mm} \text{and}\hspace{1mm} -a/2+\varepsilon\leq \sigma\leq c.$$ Thus the integral vanishes as $|t|\rightarrow \infty$. Choose $T=\frac{C_0\sqrt{x}}{k}$ to be sufficiently large, where $C_0>0$ is some constant. Then by Cauchy's residue theorem,
\begin{align}
    B_a(x,h/k)&=\left(\underset{s=-a}{Res}+\cdots+\right.\left.\underset{s=1}{Res}\right)F(s,h/k)x^{s+a}(s(s+1)\cdots(s+a))^{-1}\notag \\
    &+\underbrace{\frac{1}{2\pi i}\int_{C_a}F(s,h/k)x^{s+a}(s(s+1)\cdots(s+a))^{-1}ds}_{\mathcal{S}_a(x)}. \label{B_acauchyth}
\end{align}
    Note that the residue at $s=1$ gives the initial terms in \eqref{B_aformula}, and the sum of other residues equals
\begin{align}\sum_{i=0}^{a}\frac{(-1)^i}{i!(a-i)!}F(-i,h/k)x^{a-i}.\end{align}
    We now express $F(s,h/k)$ in $\mathcal{S}_a(x)$ in \eqref{B_acauchyth} using the functional equation \eqref{funceqexp} to obtain
    \begin{align}
\mathcal{S}_a(x)=&kx^a\Bigg[\sum_{n=1}^{\infty}\frac{d(n)}{2n}\left(e_k(n\overline{h})+e_k(-n\overline{h})\right)\frac{1}{2\pi i}\int_{C_a}(\chi'(s))^2\left(\frac{nx}{k^2}\right)^s (s(s+1)\cdots(s+a))^{-1}ds\notag\\
    &+\sum_{n=1}^{\infty}\frac{d(n)}{2n}\left(e_k(n\overline{h})-e_k(-n\overline{h})\right)\frac{1}{2\pi i}\int_{C_a}(\chi_1'(s))^2\left(\frac{nx}{k^2}\right)^s (s(s+1)\cdots(s+a))^{-1}ds\notag\\
    &-\sum_{n=1}^{\infty}\frac{(d(n)\log k+d_{(0,1)}(n))}{n}\left(e_k(n\overline{h})+e_k(-n\overline{h})\right)\times\\
    &\hspace{4cm}\times\frac{1}{2\pi i}\int_{C_a}\chi(s)\chi'(s)\left(\frac{nx}{k^2}\right)^s (s(s+1)\cdots(s+a))^{-1}ds\notag\\
&-\sum_{n=1}^{\infty}\frac{(d(n)\log k+d_{(0,1)}(n))}{n}\left(e_k(n\overline{h})-e_k(-n\overline{h})\right)\times\\
    &\hspace{4cm}\times\frac{1}{2\pi i}\int_{C_a}\chi_1(s)\chi_1'(s)\left(\frac{nx}{k^2}\right)^s (s(s+1)\cdots(s+a))^{-1}ds\notag\\
&+\sum_{n=1}^{\infty}\frac{(d(n)\log^2 k+2d_{(0,1)}(n)\log k+D_{(1)}(n))}{2n}\left(e_k(n\overline{h})+e_k(-n\overline{h})\right)\times\\
&\hspace{4cm}\times\frac{1}{2\pi i}\int_{C_a}(\chi(s))^2\left(\frac{nx}{k^2}\right)^s(s(s+1)\cdots(s+a))^{-1} ds\notag\\
    &+\sum_{n=1}^{\infty}\frac{(d(n)\log^2 k+2d_{(0,1)}(n)\log k+D_{(1)}(n))}{2n}\left(e_k(n\overline{h})-e_k(-n\overline{h})\right)\times\\
&\hspace{4cm}\times\frac{1}{2\pi i}\int_{C_a}(\chi_1(s))^2\left(\frac{nx}{k^2}\right)^s(s(s+1)\cdots(s+a))^{-1} ds\Bigg].\label{S_aformula}
    \end{align}
Since $a\geq 1$ the above expression is valid in the region $\Re(s)\leq -a/2+\varepsilon<0$. 
\par For $X_n=\frac{nx}{k^2}$, let us consider
     \begin{align}
        F_1(X_n)&=\frac{1}{2\pi i}\int_{C_a}(\chi(s))^2X_n^s(s(s+1)\cdots(s+a))^{-1} ds\\
        &=\frac{1}{2\pi^2}\bigg[\underbrace{\frac{1}{2\pi i}\int_{C_a}\Gamma^2(1-s)(4\pi^2X_n)^{s}(s(s+1)\cdots(s+a))^{-1}ds}_{\mathcal{H}_1(X_n)}\\
        &\hspace{1cm}-\underbrace{\frac{1}{2\pi i}\int_{C_a}\Gamma^2(1-s)\cos(\pi s)(4\pi^2X_n)^{s}(s(s+1)\cdots(s+a))^{-1}ds}_{\mathcal{H}_2(X_n)}\bigg],\label{F11}
    \end{align}
    where the second equality above holds by \eqref{chichi1}. Next we observe that
    \begin{align}
        F_2(X_n)&=\frac{1}{2\pi i}\int_{C_a}\chi(s)\chi'(s)X_n^s (s(s+1)\cdots(s+a))^{-1}ds \\
     &=\frac{1}{4\pi i}\int_{C_a}(\chi^2(s)'X_n^s (s(s+1)\cdots(s+a))^{-1}ds.\label{F21}
    \end{align}
    Using integration by parts,
\begin{align}
    F_2(X_n)=&\frac{\log X_n}{4\pi^2}\bigg[\frac{1}{2\pi i}\int_{C_a}\Gamma^2(1-s)\cos(\pi s)(4\pi^2X_n)^{s}(s(s+1)\cdots(s+a))^{-1}ds\\
    &-\frac{1}{2\pi i}\int_{C_a}\Gamma^2(1-s)(4\pi^2X_n)^{s}(s(s+1)\cdots(s+a))^{-1}ds\bigg]+O(X_n^{-a/2+1/2+\theta}T^{-1-2\theta})\notag\\
    &=\frac{\log X_n}{4\pi^2}\bigg [\mathcal{H}_2(X_n)-\mathcal{H}_1(X_n)\bigg]+O(X_n^{-a/2+\varepsilon}T^{-2\varepsilon})\label{F22}
\end{align}
where the error terms in \eqref{F22} comes by estimating the integrals of the type
\begin{align}\label{Error type}
    \int_{T}^{\infty}X_n^{-a/2+\varepsilon}|t|^{-1-\epsilon}dt\ll X_n^{-a/2+\varepsilon}T^{-\varepsilon}.
\end{align}
It remains to investigate $F_3(X_n)$. Again a simple calculation shows 
    \begin{align}
        F_3(X_n)&=\frac{1}{2\pi i}\int_{C_a}(\chi'(s))^2X_n^s (s(s+1)\cdots(s+a))^{-1}ds\\
        &=\underbrace{\frac{1}{2\pi i}\int_{C_a}(\chi(s)\chi'(s))'X_n^s (s(s+1)\cdots(s+a))^{-1}ds}_{\mathcal{J}_1(X_n)}\\
        &-\underbrace{\frac{1}{2\pi i}\int_{C_a}\chi(s)\chi''(s)X_n^s (s(s+1)\cdots(s+a))^{-1}ds}_{\mathcal{J}_2(X_n)}.\label{F31}
    \end{align}
    Using integration by parts in the first integral on right hand side of \eqref{F31}, we have
    \begin{align}
       \mathcal{J}_1(X_n)
        &=-(\log X_n)F_2(X_n)+O(X_n^{-a/2+\varepsilon}T^{-\varepsilon}),\label{F32}
    \end{align}
    where error term in expression  \eqref{F32} arises from estimating integrals of the type 
    \begin{align}
        \int_T^{\infty}X_n^{-a/2+\varepsilon}|t|^{-1-2\varepsilon}\log|t|dt\ll X_n^{-a/2+\varepsilon}T^{-\varepsilon}.
    \end{align}
    Using Lemma \eqref{chiestimate},
    \begin{align}
\mathcal{J}_2(X_n)=\int_{C_a}\llap{=\hspace{9.3pt}}\left(\frac{|t|}{2\pi}\right)^{1-2\sigma}\left(-\log\frac{|t|}{2\pi}\right)^2e^{iE(t)}X_n^{s}(s(s+1)\cdots(s+a))^{-1}ds+O(X_n^{-a/2+\varepsilon}T^{-\varepsilon}),\label{F32.5}
    \end{align}
    taking the integral over $[-a-1/2-iT_0,-a-1/2+iT_0]$ also in the error term above, for sufficiently large fixed $T_0<T$. Also note that the integral in \eqref{F32.5} over the horizontal segments of $C_a$ is
    \begin{align}
        \ll\int_{-a-1/2}^{-a/2+\varepsilon}T^{-a-2\sigma}\log^2T\hspace{1mm}X_n^{\sigma}d\sigma\ll X_n^{-a/2+\varepsilon}T^{-\varepsilon}.\label{F32.6}
    \end{align}
    Next, over the interval $T\leq |t|\leq \infty$ and $\sigma=-a/2+\varepsilon$, $\frac{d}{dt}\left(\frac{e^{iE(t)}}{2i}\right)=\left(-\log \frac{|t|}{2\pi}\right)e^{iE(t)}$. Hence, using integration by parts, the integral on the right hand side of \eqref{F32.5} equals
    \begin{align}
        \frac{\log X_n}{2i}\int_{T}^{\infty}\left(\frac{|t|}{2\pi}\right)^{1+a-2\varepsilon}\left(-\log\frac{|t|}{2\pi}\right)e^{iE(t)\hspace{1mm}}X_n^{s}&(s(s+1)\cdots(s+a)^{-1}ds+O(X_n^{-a/2+\varepsilon}T^{-\varepsilon}).
    \end{align}
    Again using integration by parts on the above integral, we get
    \begin{align}
        &\left(\int_{-a/2+\varepsilon-i\infty}^{-a/2+\varepsilon-iT}+\int_{-a/2+\varepsilon+iT}^{-a/2+\varepsilon+i\infty}\right)\chi(s)\chi''(s)X_n^s(s(s+1)\cdots(s+a))^{-1}ds\\
        &=\frac{1}{4}\log^2 X_n \left(\int_{-\infty}^{-T}+\int_{T}^{\infty}\right)\left(\frac{|t|}{2\pi}\right)^{1+a-2\varepsilon}e^{iE(t)}(1+O(1/t))\hspace{1mm}X_n^{s}(s(s+1)\cdots(s+a))^{-1}ds\\
        &\hspace{7cm}+O(X_n^{-a/2+\varepsilon}T^{-\varepsilon}(1+\log X_n+\log^2 X_n)).\label{F33}
    \end{align}
    Similarly, we can obtain
    \begin{align}
        &\left(\int_{-a-1/2-iT}^{-a-1/2-iT_0}+\int_{-a-1/2+iT_0}^{-a-1/2+iT}\right)\chi(s)\chi''(s)X_n^s(s(s+1)\cdots(s+a))^{-1}ds\\
        &=\frac{1}{4}\log^2 X_n \left(\int_{-T}^{-T_0}+\int_{T_0}^{T}\right)\left(\frac{|t|}{2\pi}\right)^{2a+2}e^{iE(t)}(1+O(1/t))\hspace{1mm}X_n^{s}(s(s+1)\cdots(s+a))^{-1}ds\\
        &\hspace{6cm}+O(X_n^{-a-1/2}T^{a+1+\varepsilon}(1+\log X_n+\log^2 X_n)).\label{F34}
    \end{align}
    Combining \eqref{F32.6}, \eqref{F33} and \eqref{F34} together with \eqref{F32.5} gives
    \begin{align}
        \mathcal{J}_2(X_n)=\frac{1}{4}\log^2{X_n}F_1(X_n)+O(X_n^{-a/2+\varepsilon}T^{-\varepsilon}(1+\log X_n+\log^2 X_n)).\label{J2final}
    \end{align}
    By \eqref{F31}, \eqref{F32} and \eqref{J2final},
    \begin{align}
        F_3(X_n)=-\frac{1}{4}\log^2 X_nF_1(X_n)-&\log X_nF_2(X_n)+O(X_n^{-a/2+\varepsilon}T^{-\varepsilon}(1+\log X_n+\log^2 X_n)).
\end{align}
Next employing \eqref{F11} and \eqref{F22} in the above expression and simplifying,  we obtain
       \begin{align} 
   F_3(X_n) &=\frac{1}{8\pi^2} \log^2 X_n(\mathcal{H}_1(X_n)-\mathcal{H}_2(X_n))+O(X_n^{-a/2+\varepsilon}T^{-\varepsilon}(1+\log X_n+\log^2 X_n)). \label{99}
    \end{align}
    Similarly, we may evaluate the integrals involving $\chi_1^2(s)$, $\chi_1\chi_1'(s)$ and $(\chi_1'(s))^2$ in \eqref{S_aformula}. Now employing the above facts together with \eqref{F11}, \eqref{F22} and \eqref{99} in \eqref{S_aformula}, we get
    \begin{align}
        \mathcal{S}_a(x)&=\frac{kx^a}{4\pi^2}\Bigg[\sum_{n=1}^{\infty}\frac{d(n)}{4n}\left(e_k(n\overline{h})+e_k(-n\overline{h})\right)\log^2\left(\frac{nx}{k^2}\right)\left\{\mathcal{H}_1\left(\frac{nx}{k^2}\right)-\mathcal{H}_2\left(\frac{nx}{k^2}\right)\right\}\\
    &+\sum_{n=1}^{\infty}\frac{d(n)}{4n}\left(e_k(n\overline{h})-e_k(-n\overline{h})\right)\log^2\left(\frac{nx}{k^2}\right)\left\{\mathcal{H}_1\left(\frac{nx}{k^2}\right)+\mathcal{H}_2\left(\frac{nx}{k^2}\right)\right\}\notag\\
    &+\sum_{n=1}^{\infty}\frac{(d(n)\log k+d_{(0,1)}(n))}{n}\left(e_k(n\overline{h})+e_k(-n\overline{h})\right)\log\left(\frac{nx}{k^2}\right)\left\{\mathcal{H}_1\left(\frac{nx}{k^2}\right)-\mathcal{H}_2\left(\frac{nx}{k^2}\right)\right\}\notag\\
&+\sum_{n=1}^{\infty}\frac{(d(n)\log k+d_{(0,1)}(n))}{n}\left(e_k(n\overline{h})-e_k(-n\overline{h})\right)\log\left(\frac{nx}{k^2}\right)\left\{\mathcal{H}_1\left(\frac{nx}{k^2}\right)+\mathcal{H}_2\left(\frac{nx}{k^2}\right)\right\}\notag\\
&+\sum_{n=1}^{\infty}\frac{(d(n)\log^2 k+2d_{(0,1)}(n)\log k+D_{(1)}(n))}{n}\left(e_k(n\overline{h})+e_k(-n\overline{h})\right)\left\{\mathcal{H}_1\left(\frac{nx}{k^2}\right)-\mathcal{H}_2\left(\frac{nx}{k^2}\right)\right\}\notag\\
    &+\sum_{n=1}^{\infty}\frac{(d(n)\log^2 k+2d_{(0,1)}(n)\log k+D_{(1)}(n))}{n}\left(e_k(n\overline{h})-e_k(-n\overline{h})\right)\left\{\mathcal{H}_1\left(\frac{nx}{k^2}\right)+\mathcal{H}_2\left(\frac{nx}{k^2}\right)\right\}\Bigg]\\
    &+O(k^{a+1+\varepsilon}x^{a/2+\varepsilon}).\label{S_aformula2}
    \end{align}
     Note that $\mathcal{H}_1\left(\frac{nx}{k^2}\right)$ and $\mathcal{H}_2\left(\frac{nx}{k^2}\right)$ are of the type $I_1$ and $I_2$ in Lemma \eqref{integraltobessel}. Utilizing  \eqref{firstintegral}, \eqref{secondintegral} in \eqref{S_aformula2} and then substituting the resultant expression in \eqref{B_acauchyth}, we can conclude \eqref{delta_aformula} for $a\geq 1$.\\
\section{Proof of Theorem \ref{Deltaameansqformula}}\label{pfofmeansq2}
The proof is similar to that of Theorem \eqref{Delta0meansqtheorem}.\\
We first establish a bound for $F(-n,h/k)$ for $0\leq n\leq a$, $a\in\mathbb{Z}$. By \eqref{Fexp},
\begin{align}
    F(-n,h/k)=k^{2n}\sum_{\alpha,\beta=1}^ke_k(\alpha\beta h)\big[&\zeta'(-n,\alpha/k)\zeta'(-n,\beta/k)-2\log k\hspace{1mm}\zeta(-n,\alpha/k)\zeta'(-n,\beta/k)\\
    &+\log^2 k\hspace{1mm}\zeta(-n,\alpha/k)\zeta(-n,\beta/k)\big]. 
\end{align}
Using \eqref{e_kpartialsumbound}, \eqref{zetaexp}, \eqref{zeta'exp} and properties of the Gamma function, we obtain
\begin{align}
    F(-n,h/k)\ll k^{2n+1}\log^3 k
\end{align}
by employing partial summation. Thus, if $T\ll k^2$, then by \eqref{B_aformula} we have 
$$\Delta_a(x,h/k)\ll k^{2a+1}\log^3 k .$$
Hence, for $T\ll k^2$, we trivially have
\begin{align}
    \int_T^{2T}|\Delta_a(x,h/k)|^2dx\ll k^{4a+2}\log^6k\hspace{1mm}T.\label{deltaameansq1}
\end{align}
Next, for $k^2<T$, let $T\leq x\leq 2T$, then formula \eqref{delta_aformula} can be written as 
\begin{align}
    \Delta_a(x,h/k)=U(x,h/k)+O(k^{a+1+\varepsilon}\hspace{0.5mm}x^{a/2+\varepsilon}),\label{deltaameansq2}
\end{align}
where
\begin{align}
    U(x,h/k)=\pi^{-\frac{1}{2}}(k/2\pi)^{a+\frac{1}{2}}x^{\frac{a}{2}+\frac{1}{4}}&\sum_{n=1}^{\infty}\Bigg\{\frac{1}{4}d(n)\log^2 x+\left(\frac{1}{2}d(n)\log n+d_{(0,1)}(n)\right)\log x\\
        &\hspace{0.5cm}+\left(\frac{1}{4}d(n)\log^2 n+d_{(0,1)}(n)\log n+D_{(1)}(n)\right)\Bigg\}\times\notag\\
        &\times n^{-\left(\frac{a}{2}+\frac{3}{4}\right)} e_k(-n\overline{h})\cos(4\pi\sqrt{nx}/k-\pi/4-\pi a/2).
\end{align}
Squaring $|U(x,h/k)|^2$ and integrating term by term, we obtain
\begin{align}
    \int_T^{2T}|U(x,h/k)|^2dx=U_0+O(k^{2a+1}(|U_1|+|U_2|)),\label{deltaameansq3}
\end{align}
where
\begin{align}
    U_0=\frac{1}{2\pi}(k/2\pi)^{2a+1}&\sum_{n=1}^{\infty}n^{-(a+3/2)}\int_T^{2T}x^{a+1/2}\bigg[\frac{1}{16}d^2(n)\log^4x+\frac{1}{2}d(n)f_1(n)\log^3x\\
    &+\frac{1}{2}\left(d(n)f_2(n)+f_1^2(n)\right)\log^2x+2f_1(n)f_2(n)\log x+f_2^2(n)\bigg]dx,
\end{align}
\begin{align}
    U_1&=\sum_{m,n=1 \atop{m\neq n}}^{\infty}(mn)^{-\left(\frac{a}{2}+\frac{3}{4}\right)}\int_T^{2T}x^{a+1/2}e(2(\sqrt{m}-\sqrt{n})\sqrt{x}/k)\times\\
        &\hspace{3.5cm}\times\bigg\{\frac{1}{16}d(n)d(m)\log^4x+\frac{1}{2}d(n)f_1(m)\log^3x\\
        &\hspace{1cm}+\left(\frac{1}{2}d(n)f_2(m)+f_1(n)f_1(m)\right)\log^2x+2f_1(n)f_2(m)\log x+f_2(n)f_2(m)\bigg\}dx,
\end{align}
\begin{align}
    U_2&=\sum_{m,n=1}^{\infty}(mn)^{-\left(\frac{a}{2}+\frac{3}{4}\right)}\int_T^{2T}x^{a+1/2}e(2(\sqrt{m}+\sqrt{n})\sqrt{x}/k)\times\\
        &\hspace{3.5cm}\times\bigg\{\frac{1}{16}d(n)d(m)\log^4x+\frac{1}{2}d(n)f_1(m)\log^3x\\
        &\hspace{1cm}+\left(\frac{1}{2}d(n)f_2(m)+f_1(n)f_1(m)\right)\log^2x+2f_1(n)f_2(m)\log x+f_2(n)f_2(m)\bigg\}dx,
\end{align}
where $f_1(n)$ and $f_2(n)$ are same as defined in \eqref{f1f2exp}. We now use the following lemma to establish bounds on $U_1$ and $U_2$.
\begin{lem}\label{meansqlem2}
        Let $a(n)$, $b(n)$ be arithmetical functions $\ll n^{\varepsilon}$ for all $\varepsilon>0$. Then for $0\leq j\leq 4$, $j\in\mathbb{Z}$ and $T\geq 1$, we have
        \begin{align}
            \sum_{m,n=1 \atop{m\neq n}}^{\infty}a(m)b(n)(mn)^{-\left(\frac{a}{2}+\frac{3}{4}\right)}\int_T^{2T}x^{a+1/2}e(2(\sqrt{m}\pm \sqrt{n})\sqrt{x}/k)\log^jx\ll k\hspace{0.5mm}T^{a+1+\varepsilon}.\label{meansqlemineq2}
        \end{align}
    \end{lem}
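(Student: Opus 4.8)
The plan is to follow the proof of Lemma~\ref{meansqlem1} almost verbatim, the single new ingredient being that the summation now runs over the full range $m\neq n$ (all $m,n\geq1$) rather than over $m,n\leq T$, so the resulting double sum must be shown to converge. First I would dispose of the inner $x$-integral using the first-derivative test in Lemma~\ref{expintegrallemma}. Writing the oscillatory factor as $e^{iF(x)}$ with $F(x)=4\pi(\sqrt m\pm\sqrt n)\sqrt x/k$ and taking the amplitude $G(x)=x^{a+1/2}\log^j x$, we have $F'(x)=2\pi(\sqrt m\pm\sqrt n)/(k\sqrt x)$, so that $G(x)/F'(x)$ is a fixed constant multiple of $x^{a+1}\log^j x$ and hence monotonic on $[T,2T]$, while
\[
\left|\frac{F'(x)}{G(x)}\right|=\frac{2\pi|\sqrt m\pm\sqrt n|}{k\,x^{a+1}\log^j x}\gg\frac{|\sqrt m\pm\sqrt n|}{k\,T^{a+1}\log^j T}\qquad(T\leq x\leq 2T).
\]
Lemma~\ref{expintegrallemma} then gives
\[
\int_T^{2T}x^{a+1/2}e(2(\sqrt m\pm\sqrt n)\sqrt x/k)\log^j x\,dx\ll k\,T^{a+1}\log^j T\,|\sqrt m\pm\sqrt n|^{-1}.
\]

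Inserting this bound together with the hypotheses $a(m),b(n)\ll(mn)^\varepsilon$, the left-hand side of \eqref{meansqlemineq2} is
\[
\ll k\,T^{a+1+\varepsilon}\sum_{\substack{m,n\geq1\\ m\neq n}}(mn)^{-(a/2+3/4)+\varepsilon}\,|\sqrt m\pm\sqrt n|^{-1},
\]
so it remains to show that this double sum is $O(1)$, with implied constant depending only on $a$ and $\varepsilon$. For the $+$ sign this is straightforward: since $\sqrt m+\sqrt n\geq2(mn)^{1/4}$, each summand is $\ll(mn)^{-(a/2+1)+\varepsilon}$, and the series factors as the square of $\sum_{m\geq1}m^{-(a/2+1)+\varepsilon}$, which converges because $a\geq1$ gives $a/2+1>1$.

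The hard part will be the $-$ sign, where $\sqrt m-\sqrt n$ is small for $m$ near $n$, concentrating the mass near the diagonal. By symmetry it suffices to treat $m>n$; writing $m=n+r$ with $r\geq1$ and using $\sqrt m-\sqrt n=(m-n)/(\sqrt m+\sqrt n)$ yields $|\sqrt m-\sqrt n|^{-1}\ll\sqrt m\,r^{-1}$. Setting $\beta=a/2+3/4-\varepsilon$, the sum is then
\[
\ll\sum_{n\geq1}n^{-\beta}\sum_{r\geq1}(n+r)^{-\beta+1/2}r^{-1}.
\]
I would split the inner sum at $r=n$: for $r\leq n$ one has $(n+r)^{-\beta+1/2}\asymp n^{-\beta+1/2}$ and $\sum_{r\leq n}r^{-1}\ll\log n$, while for $r>n$ one uses $(n+r)^{-\beta+1/2}\leq r^{-\beta+1/2}$ together with $\sum_{r>n}r^{-\beta-1/2}\ll n^{1/2-\beta}$ (valid since $\beta+1/2>1$); both pieces contribute $\ll n^{-\beta+1/2}\log n$. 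Hence the outer sum is $\ll\sum_{n\geq1}n^{-2\beta+1/2}\log n$, which converges because $a\geq1$ forces $\beta\geq5/4-\varepsilon$ and so $2\beta-1/2>1$. The logarithm produced by the near-diagonal terms is harmless, being absorbed into a slightly larger $\varepsilon$. This bounds both double sums and establishes \eqref{meansqlemineq2}.
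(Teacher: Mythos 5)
Your proposal is correct and follows essentially the same route as the paper: Lemma \ref{expintegrallemma} applied to the inner integral to gain the factor $kT^{a+1+\varepsilon}|\sqrt{m}\pm\sqrt{n}|^{-1}$, then a near-diagonal analysis of the resulting double sum via $|\sqrt{m}-\sqrt{n}|^{-1}\ll \sqrt{m}\,(m-n)^{-1}$ and convergence from $a\geq 1$. The only difference is that you spell out the splitting of the inner sum at $r=n$, which the paper compresses into a single line ($\ll \sum_m m^{-(a+1)}$); your version is a more complete rendering of the same argument.
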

    \begin{proof}
    Using Lemma \eqref{expintegrallemma},
        \begin{align}
           \int_T^{2T}x^{a+1/2}e(2(\sqrt{m}\pm\sqrt{n})\sqrt{x}/k)\log^j x\hspace{0.5mm}dx\ll kT^{a+1}|\sqrt{n}\pm\sqrt{m}|^{-1}\log^j x.\label{meansqineq2}
        \end{align}
        Thus, the expression in \eqref{meansqlemineq2} is
        \begin{align}
            &\ll k\hspace{0.5mm}T^{a+1+\varepsilon}\sum_{m,n=1 \atop{m<n}}^{\infty}m^{-\left(\frac{a}{2}+\frac{3}{4}\right)}n^{-\left(\frac{a}{2}+\frac{1}{4}\right)}(n\pm m)^{-1}\ll k\hspace{0.5mm}T^{a+1+\varepsilon}\sum_{m=1}^{\infty}m^{-(a+1)}\ll k\hspace{0.5mm}T^{a+1+\varepsilon}.
        \end{align}
    \end{proof}
Using Lemma \eqref{meansqlem2}, we have $U_1\ll kT^{a+1+\varepsilon}$ and $U_2\ll kT^{a+1+\varepsilon}$, and integrating the sum $U_0$ term by term using integration by parts, \eqref{deltaameansq3} gives
\begin{align}
&\int_T^{2T}|U(x,h/k)|^2dx=\frac{1}{2\pi (a+3/2)}(k/2\pi)^{2a+1}x^{a+3/2}\sum_{n=1}^{\infty}n^{-(a+3/2)}\times\\
&\times\bigg[\frac{1}{16}d^2(n)\sum_{i=0}^{4}\left(\frac{-1}{a+3/2}\right)^{i}\frac{4!}{(4-i)!}\hspace{0.5mm}\log^{4-i}x+\frac{1}{2}d(n)f_1(n)\sum_{i=0}^{3}\left(\frac{-1}{a+3/2}\right)^{i}\frac{3!}{(3-i)!}\hspace{0.5mm}\log^{3-i}x\\
&\hspace{1cm}+\frac{1}{2}\left(d(n)f_2(n)+f_1^2(n)\right)\sum_{i=0}^{2}\left(\frac{-1}{a+3/2}\right)^{i}\frac{2!}{(2-i)!}\hspace{0.5mm}\log^{2-i}x\\
&\hspace{1cm}+2f_1(n)f_2(n)\sum_{i=0}^{1}\left(\frac{-1}{a+3/2}\right)^{i}\frac{1!}{(1-i)!}\hspace{0.5mm}\log^{1-i}x+f_2^{2}(n)\bigg]\Bigg|_{T}^{2T}+O(k^{2a+2}T^{a+1+\varepsilon}).
\end{align}
This along with \eqref{deltaameansq2} and Cauchy's inequality gives
\begin{align}
    \int_T^{2T}|\Delta_a(x,h/k)|^2dx=k^{2a+1}p(x,n)\bigg|_T^{2T}+O(k^{2a+2}T^{a+1+\varepsilon})+O(k^{2a+\frac{3}{2}}T^{a+\frac{5}{4}+\varepsilon}),\label{deltaameansq4}
\end{align}
where
\begin{align}
    p(x,n)=\frac{1}{a+3/2}(2\pi)^{-2a-2}x^{a+3/2}&\sum_{n=1}^{\infty}n^{-(a+3/2)}\bigg[\frac{1}{16}d^2(n)\sum_{i=0}^{4}\left(\frac{-1}{a+3/2}\right)^{i}\frac{4!}{(4-i)!}\hspace{0.5mm}\log^{4-i}x\\
    &\hspace{1.3cm}+\frac{1}{2}d(n)f_1(n)\sum_{i=0}^{3}\left(\frac{-1}{a+3/2}\right)^{i}\frac{3!}{(3-i)!}\hspace{0.5mm}\log^{3-i}x\\
&+\frac{1}{2}\left(d(n)f_2(n)+f_1^2(n)\right)\sum_{i=0}^{2}\left(\frac{-1}{a+3/2}\right)^{i}\frac{2!}{(2-i)!}\hspace{0.5mm}\log^{2-i}x\\
&+2f_1(n)f_2(n)\sum_{i=0}^{1}\left(\frac{-1}{a+3/2}\right)^{i}\frac{1!}{(1-i)!}\hspace{0.5mm}\log^{1-i}x+f_2^{2}(n)\bigg],
\end{align}
for $k^2< T$. Hence combining \eqref{deltaameansq1} and \eqref{deltaameansq4} and noting $k\ll X^{\frac12-\varepsilon}$,  we deduce \eqref{deltaameansqestimate}.
\section{Acknowledgement}
The second author's research is funded by the SERB under File No. MTR/2023/000837 and CRG/2023/002698.

\end{document}